\numberwithin{equation}{section}
\author{Wenju Wu, Fulin Zhong$^\dag$}
\title{Existence of periodic solutions for the Grushin critical problem}
\address{School of Mathematics and Statistics, Central China Normal University, Wuhan 430079, P. R. China}
\email{wjwu@mails.ccnu.edu.cn} 
\address{School of Mathematics and Statistics, Central China Normal University, Wuhan 430079, P. R. China}
\email{flzhong@mails.ccnu.edu.cn}
\thanks{The research was supported by NSFC (No.~12471106) and the self-determined research funds of CCNU from the colleges' basic research and operation of MOE (No.~CCNU24JC006).}
\thanks{$\dag$Corresponding author: Fulin Zhong.}
\numberwithin{equation}{section}
\newtheorem{theorem}{Theorem}[section]
\newtheorem{remark}[theorem]{Remark}
\newtheorem{lemma}[theorem]{Lemma}
\newtheorem{proposition}[theorem]{Proposition}
\newtheorem{corollary}[theorem]{Corollary}
\begin{document}	
	\maketitle
	\pagestyle{fancy}
	\lhead{}
	\chead{}
	\rhead{}
	\lfoot{}
	\cfoot{\thepage}
	\rfoot{}
	\renewcommand{\headrulewidth}{0pt} 
	\renewcommand{\footrulewidth}{0pt} 

	\begin{abstract}
		We study a Grushin critical problem in a strip domain which satisfies the periodic boundary conditions. By applying the finite-dimensional reduction method, we construct a periodic solution when the prescribed curvature function is periodic. Furthermore, we also consider the Grushin critical problem in $\mathbb{R}^{N} (N \geq 5)$. Compared with Billel et al. (Differential Integral Equations 32: 49-90, 2019), we use the method by Guo and Yan (Math. Ann. 388: 795-830, 2024) to construct periodic solutions under some weaker conditions, avoiding the complicated estimates and uniqueness proof. Notably, Guo and Yan (Math. Ann. 388: 795-830, 2024) obtained solutions periodic with respect to some of the first variables, while the solutions in this paper are periodic with respect to some intermediate variables. 
		
		\medskip\noindent
		{\bf Keywords:} Grushin critical problem; Periodic boundary conditions; Multi-bubbling solutions.
		
		\medskip\noindent
		{\bf AMS:} 35B10; 35B33; 35B44.
	\end{abstract}

	\section{Introduction} 
	In this paper, we study the following Grushin critical problem with periodic boundary conditions
	\begin{align}\begin{aligned}\label{eq1.1}
			\begin{cases}
				-\Delta u(x)=M(x) \frac{u^{\frac{N}{N-2}}(x)}{|y|}, \ u>0, & \text { in } \Omega, \\
				u\left(y, z^1+L e_{j},z^2\right)=u(x), u_{x_{j}}\left(y, z^1+L e_{j},z^2\right)=u_{x_{j}}(x), & \text { if } z^1=-\frac{L}{2} e_{j}, j=1, \cdots, \bar{k}, \\
				u \rightarrow 0, & \text { as } \left|y\right|+\left|z^2\right| \rightarrow+\infty,
			\end{cases}
		\end{aligned}
	\end{align}
	where $N \ge 5$, $L>0$ is a constant, $\Omega$ is a strip defined as
	$$
	\Omega=\Bigl\{x=(y,z)\in \mathbb{R}^{N}: z=(z^1,z^2),\,|z^1_i| \le \frac{L}{2},\,i=1,\cdots,\bar{k},\,y \in \mathbb{R}^{k},\,z\in \mathbb{R}^{h}\Bigr\},
	$$
	$1\le h\le k-1$, $k+h=N$, $1\le \bar{k}\le h$ and $e_1=(1,0,\cdots,0)$, $e_2=(0,1,\cdots,0), \cdots, e_{h}=(0,0,\cdots,1)$.
	
	Our primary motivation to study \eqref{eq1.1} is to construct solutions that are periodic in their $\bar{k}$ variables for the following prescribed scalar curvature problem in $\mathbb{R}^{N}$, if the curvature function $M(x)$ is periodic in its the $\bar{k}$ variables $(z_1,\cdots,z_{\bar{k}})$:
	\begin{equation}\label{eq1.2}
		-\Delta u(x)=M(x) \frac{u^{\frac{N}{N-2}}(x)}{|y|}, \ u>0, \ x=(y,z)\in \mathbb{R}^{k} \times \mathbb{R}^{h}=\mathbb{R}^{N}.
	\end{equation}
	
	Problem \eqref{eq1.2} is a special form of the following problem related to the Grushin operator
	\begin{equation}\label{beq1.3}
		\mathbb{G} u:=-\Delta_{y} u-4|y|^2 \Delta_{z} u=M(x) u(x)^{\frac{Q+2}{Q-2}}, \ u>0, \ x=(y, z) \in \mathbb{R}^{m_1} \times \mathbb{R}^{m_2},
	\end{equation}
	where $\{m_1, m_2\} \subset \mathbb{N}^+$, $Q=m_1+2 m_2$ is the appropriate dimension and $\frac{Q+2}{Q-2}$ is the corresponding critical exponent. In fact, if $M(x)=M(|y|, z)$ and $u=\psi(|y|, z)$ satisfy \eqref{beq1.3}, then
	\begin{equation*}\label{beq1.5}
		-\psi_{r r}(r, z)-\frac{m_1-1}{r} \psi_{r}(r, z)-4 r^2 \Delta_{z} \psi(r, z)=M(|y|, z) \psi(r, z)^{\frac{Q+2}{Q-2}},
	\end{equation*}
	where $r=|y|$. Defining $v(r, z)=\psi(\sqrt{r}, z)$, then
	$$
	\psi_{r}(\sqrt{r}, z)=2 \sqrt{r} v_{r}(r, z), \ \psi_{r r}(\sqrt{r}, z)=4 r v_{r r}(r, z)+2 v_{r}(r, z).
	$$
	Hence, $v$ satisfies
	\begin{equation*}
		-v_{r r}(r, z)-\frac{m_1}{2 r} v_{r}(r, z)-\Delta_{z} \psi(r, z)=\frac{M(\sqrt{r}, z)}{4 r} v(r, z)^{\frac{Q+2}{Q-2}},
	\end{equation*}
	that is, $v=v(|y|, z)$ solves the Hardy-Sobolev-type problem
	\begin{equation}\label{bbeq1.7}
		-\Delta u(x)=M(x) \frac{u^{\frac{k+h}{k+h-2}}}{|y|}, \ x=(y, z) \in \mathbb{R}^{k} \times \mathbb{R}^{h},
	\end{equation}
	where $k=\frac{m_1+2}{2}$, $h=m_2$ and $M(x)=M(|y|, z)=\frac{M(\sqrt{r}, z)}{4}$.	
	If we denote $k+h=N$, then \eqref{bbeq1.7} becomes \eqref{eq1.2}.
	
	The general Grushin semilinear critical problem is
	\begin{equation}\label{beq1.4}
		-\Delta_{y} u-(\alpha+1)^2|y|^{2 \alpha} \Delta_{z} u=M(x) u(x)^{\frac{Q_{\alpha}+2}{Q_{\alpha}-2}}, \ u>0, \ x=(y, z) \in \mathbb{R}^{m_1} \times \mathbb{R}^{m_2},
	\end{equation}
	where $\alpha \geq 0$ is a real number, $\mathbb{G}_{\alpha}:=-\Delta_{y} -(\alpha+1)^2|y|^{2 \alpha} \Delta_{z}$ is known as the Grushin operator, $Q_{\alpha}=m_1+m_2(\alpha+1)$ is the appropriate homogeneous dimension and the power $\frac{Q_{\alpha}+2}{Q_{\alpha}-2}$ is the corresponding critical exponent.
	
	If $\alpha=0$ and $m_1+m_2=N$, then \eqref{beq1.4} reduces to 
	\begin{equation}\label{beq1.6}
		-\Delta u(x) = M(x)u(x)^{\frac{N+2}{N-2}}, \quad u>0 \quad \text { in } \mathbb{R}^{N}.
	\end{equation}
	Via the stereographic projection, \eqref{beq1.6} is equivalent to the prescribing scalar curvature problem on the standard $n$-sphere $\mathbb{S}^{N}$, that is
	\begin{equation}\label{beq1.7}
		-\Delta_{\mathbb{S}^{N}} u+\frac{N(N-2)}{4} u=\frac{N-2}{4(N-1)} M(x) u^{\frac{N+2}{N-2}}, \quad u>0 \quad \text { on } \mathbb{S}^{N}.
	\end{equation}
	The existence of solutions for these problems \eqref{beq1.6} and \eqref{beq1.7} has been investigated widely in the last decades. We refer the readers to \cite{DLY,LYY1,LYY2,LWX,NWM,PWW,WY} and the references therein. For the fractional case of \eqref{beq1.6}, we can see \cite{GNNT,NTW}. 
	
	In the case $\alpha=1$, the nonlinear equation \eqref{beq1.3} already appeared in connection with the Cauchy-Riemann Yamabe problem, which was solved by Jerison and Lee in \cite{JL1,JL2}. For more general choices of $m_1$ and $m_2$ or for general $\alpha>0$, positive entire solutions of \eqref{beq1.4} with $M(x) \equiv 1$ have been discussed in \cite{GV,MM}. In the case of $M(x) \not \equiv 1$, the existence of multiple positive solutions has been investigated in \cite{CFMS,CPY,FU,MU}.
	
	When $M = M(|y|,z)$ is a cylindrical function, \eqref{eq1.2} has been studied extensively. Using variational methods, Cao et al. \cite{CPY} constructed multi-peak solutions for \eqref{eq1.2} which concentrate exactly at two points with a potentially large distance between them. Wang et al. \cite{WWY} proved that \eqref{eq1.2} has infinitely many positive solutions with cylindrical symmetry by a Lyapunov-Schmidt reduction argument, whose energy can be made arbitrarily large. Under some suitable conditions on $M(x)$ near its critical point,  Gheraibia et al. \cite{BWY} proved that the \eqref{eq1.2} has infinitely many bubbling solutions, which are locally unique and partially periodic. We refer the readers to \cite{GGZ,LN,LTW,LW} for other results of the existence of solutions for \eqref{eq1.2}. Moreover, Guo and Yan \cite{Guo-Yan} proved that an elliptic problem involving critical growth has infinitely many solutions, which are $L$-periodic in its first $k$ variables for any sufficiently large integer $L > 0$. It is very novel and subtle that they constructed solutions which are periodic in some variables under the weakest possible
	conditions, without involving the complicated estimates in \cite{LWX} and the uniqueness proof in \cite{DLY}. Also, there have been some results on the existence of periodic solutions for other equations and systems of equations, see \cite{Guo-Wu-Yan,Guo-Wu1,Guo-Wu2,WWZ}. Motivated by \cite{BWY,Guo-Yan}, we intend to study the existence of the periodic solutions for \eqref{eq1.2}.  
	
	In this paper, we assume that the function $M(x)$ satisfies the following conditions:
	
	$\left(A_1\right)$ $M$ is 1-periodic in its $\bar{k}$ variables $(z_1,\cdots,z_{\bar{k}})$;
	
	$\left(A_2\right)$ $0<\inf _{x \in \mathbb{R}^N} M(x) \leq \sup _{x \in \mathbb{R}^N} M(x)<\infty$ and there exist constants $a_{i} \neq 0$, $\beta_{i} \in(N-2, N-1)$, $i=1, \cdots, N$, such that $\beta:=\min _{1 \leq i \leq N} \beta_{i} \leq \beta_{M}:=\max _{1 \leq i \leq N} \beta_{i} \leq \beta\left(1+\frac{1}{N-2}\right)$, and
	
	$$
	M(x)=1+\sum_{i=1}^{N} a_{i}\left|x_{i}\right|^{\beta_{i}}+O\left(|x|^{\beta_{M}+\kappa}\right), \ x \in B_{\delta}(0),
	$$	
	where $\delta>0$ and $\kappa>0$ are some sufficiently small constants. Furthermore, we assume that $\sum_{j \in J} a_{j}<0$, where $J=\left\{j: \beta_{j}=\beta\right\}$ satisfies $J \subset \{1,2,\cdots,k\}$ or $J \subset \{k+1,k+2,\cdots,N\}$.
	
	Herein, we present the main results of the paper.
	\begin{theorem}\label{th1.1}
		Suppose that $M(x)$ satisfies the conditions $\left(A_1\right)$ and $\left(A_2\right)$. If $N=k+h \ge 5$, $1\le h\le k-1$ and $1 \leq \bar{k}<\frac{N-2}{2}$, then \eqref{eq1.1} has a solution $u_{L}$, if the integer $L>0$ is sufficiently large.
	\end{theorem}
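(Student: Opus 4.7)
The plan is to adapt the single-bubble-per-cell Lyapunov-Schmidt reduction of Guo and Yan to our Grushin-critical / Hardy-Sobolev setting on the strip $\Omega$. Since any $L$-periodic function in $z^1$ on $\mathbb{R}^N$ descends to a function on $\Omega$ and vice versa, seeking a solution in $\Omega$ is equivalent to seeking a $z^1$-periodic solution of the same equation in $\mathbb{R}^N$. The key simplification compared to a direct multi-bubble construction in $\mathbb{R}^N$ is that only one bubble per fundamental cell is needed, so the parameter set reduces essentially to the single scale $\lambda>0$.

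\emph{Step 1 (ansatz).} Let $U_{\lambda,0}$ be the explicit cylindrical Aubin-Talenti-type positive entire solution of $-\Delta U = U^{N/(N-2)}/|y|$ on $\mathbb{R}^N$ at scale $\lambda$, centered at the origin. Define
\[
W_\lambda(x) = \sum_{\mathbf{p}\in\mathbb{Z}^{\bar k}} U_{\lambda, L\mathbf{p}}(x),\qquad L\mathbf{p} = \bigl(0,\, Lp_1 e_1+\cdots+Lp_{\bar k}e_{\bar k},\,0\bigr)\in\mathbb{R}^N.
\]
Then $W_\lambda$ is automatically $L$-periodic in $z^1$, decays as $|y|+|z^2|\to\infty$, and can be regarded as a function on $\Omega$. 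We will seek $\lambda$ in a range $[\lambda_0 L^{\alpha_-},\lambda_0 L^{\alpha_+}]$ tuned so that the maximum of the reduced functional lies in the interior.

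\emph{Step 2 (linear theory).} Write $u = W_\lambda + \phi$ with the orthogonality condition $\phi\perp \partial_\lambda W_\lambda$ in a weighted norm adapted to the Hardy weight $|y|^{-1}$. The linearized operator $L_\lambda$ at $W_\lambda$ is a small perturbation, for $L$ large, of the single-bubble linearization, which is non-degenerate modulo its one-parameter dilation kernel (a classical fact for \eqref{bbeq1.7} via the cited work of Wang-Wei-Yan and others). Standard arguments then give an invertibility estimate for $L_\lambda$ modulo $\partial_\lambda W_\lambda$, and a contraction-mapping argument produces a unique small $\phi=\phi_\lambda$ with a sharp quantitative bound, uniformly in the $\lambda$-range.

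\emph{Step 3 (reduced functional).} Set $F(\lambda) := I_\Omega(W_\lambda+\phi_\lambda)$, where $I_\Omega$ is the periodic energy functional. Expanding,
\[
F(\lambda) = I_\Omega(U_{\lambda,0}) \;+\; \text{(curvature contribution)} \;+\; \text{(bubble interactions)} \;+\; o(\cdot).
\]
Using $(A_2)$, the curvature contribution is of order $\lambda^{-\beta}\sum_{j\in J} a_j$, which is strictly negative by hypothesis. The bubble-interaction terms form a lattice sum whose size, thanks to $\bar k<(N-2)/2$, is of strictly lower order than $\lambda^{-\beta}$ over the relevant range of $\lambda$. Consequently $F$ attains its maximum strictly inside the admissible range, and this critical point $\lambda_L$ produces a genuine solution $u_L = W_{\lambda_L}+\phi_{\lambda_L}$ of \eqref{eq1.1}.

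\textbf{Main obstacle.} The core difficulty is the energy expansion in Step 3, specifically controlling the infinite sum of cross-interactions coming from the periodization and showing that it is dominated by the curvature correction $\lambda^{-\beta}\sum_{j\in J}a_j$. The hypothesis $\bar k<(N-2)/2$ is precisely what keeps the resulting lattice sums convergent and of the right (lower) order; the restriction $J\subset\{1,\dots,k\}$ or $J\subset\{k+1,\dots,N\}$ separates the contributions from the $y$- and $z$-directions, which scale differently against the Hardy weight, so that the leading term in $F$ does not cancel. A secondary, more technical, point is transplanting the non-degeneracy of a single Grushin bubble in classical norms to the weighted periodic functional framework, but this follows Guo and Yan's setup essentially verbatim.
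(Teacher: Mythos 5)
There are two genuine gaps in your outline, both in places you identify as routine.

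\emph{First, the kernel of the linearization is $(h+1)$-dimensional, not one-dimensional.} The bubbles $W_{\hat{x},\lambda}$ form an $(h+1)$-parameter family ($\hat z\in\mathbb{R}^h$ and $\lambda>0$; there is no $y$-translation because of the Hardy weight $|y|^{-1}$), so the approximate kernel of the linearized operator contains all $\partial_{\hat z_i}W$, $i=1,\dots,h$, in addition to $\partial_\lambda W$. Imposing orthogonality only to $\partial_\lambda W_\lambda$ does not give invertibility, and a critical point of the one-variable reduced functional $F(\lambda)$ does not produce a true solution: the Lagrange multipliers on the $h$ translation directions have no reason to vanish, since $M$ is not assumed to have any symmetry in $z$. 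The paper keeps $\hat z$ as a free parameter, works in the space $\mathbf{E}$ defined by $h+1$ orthogonality conditions, and must solve $h$ additional reduced equations (Proposition \ref{proA.3}, equation \eqref{eq3.25}) to locate the bubble center $\hat{x}_{0,L}$ inside the cell, with $|\hat{x}_{0,L}|\le C\lambda^{-1-\theta}$. Your "parameter set reduces essentially to the single scale $\lambda$" is where the argument breaks.

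\emph{Second, the mechanism selecting $\lambda_L$ is wrong.} You claim the lattice interaction is of strictly lower order than the curvature correction $\lambda^{-\beta}\sum_{j\in J}a_j$, and that this yields an interior maximum of $F$. If that were so, the reduced functional would be $I(U)$ plus a single signed, monotone-in-$\lambda$ correction, whose extremum sits at an endpoint of the admissible range, not in the interior; equivalently, the reduced equation in $\lambda$ would have no solution. The construction works precisely because the periodic interaction term (of size $\lambda^{-(N-1)}L^{-(N-2)}\sum_{j\ge1}\Gamma(P^j,0)$, with a definite sign) and the curvature term (of size $\lambda^{-\beta-1}\sum_{i\in J}a_i$, of the opposite sign by the hypothesis $\sum_{j\in J}a_j<0$) are forced to be of the \emph{same} order; their balance is what fixes $\lambda_L=L^{\frac{N-2}{\beta-N+2}}(C_0+o(1))$ as in equation \eqref{eq3.26} and the ansatz \eqref{aeq2.4}. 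The role of $\bar k<\frac{N-2}{2}$ is narrower than you suggest: it guarantees $\tau>\bar k$ so that the lattice sums $\sum_j(1+\lambda|y|+\lambda|z-\hat z_L^j|)^{-\tau}$ converge, which is needed throughout the weighted-norm estimates, not to make the interaction negligible. I also note, more mildly, that the paper does not run an energy expansion at all: it works with weighted $L^\infty$ norms, proves invertibility of $I-T$ by Fredholm theory (compactness of $T$ plus injectivity), and solves the reduced system of $h+1$ equations directly; and its approximate solution is the Green-function projection $PW_{\hat{x},\lambda}$ rather than the raw lattice sum, which is only used as an upper bound (Lemma \ref{lmA.1}).
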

	
	As a direct consequence of Theorem \ref{th1.1}, we derive the following results.
	\begin{corollary}\label{cor1.2}
		Under the same conditions as in Theorem \text{\rm \ref{th1.1}}, \eqref{eq1.2} has infinitely many solutions, which are $L$-periodic in its $\bar{k}$ variables for any sufficiently large integer $L>0$.
	\end{corollary}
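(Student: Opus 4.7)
The plan is to construct $u_L$ via Lyapunov--Schmidt finite-dimensional reduction, adapting the framework of Guo--Yan \cite{Guo-Yan}. First, I would recall the standard Hardy--Sobolev bubble $U_{\lambda,\bar x}$: the positive entire solution of the limiting problem $-\Delta U = U^{N/(N-2)}/|y|$ on $\mathbb{R}^N$ with concentration scale $\lambda>0$ and center $\bar x$ on the singular axis $\{y=0\}$. To match the periodic boundary conditions in $z^1=(z_1,\ldots,z_{\bar k})$, the approximate solution on $\Omega$ is taken to be the periodic sum
\begin{equation*}
W_{\lambda,\bar x}(x) = \sum_{p\in\mathbb{Z}^{\bar k}} U_{\lambda,\,\bar x + L(p_1 e_1+\cdots+p_{\bar k}e_{\bar k})}(x),
\end{equation*}
parameterized by $\lambda$ in a bounded window $[\lambda_0,\lambda_1]$ and by $\bar x$ varying in a bounded subset: its non-periodic components lie in a ball of radius $\delta$ around the origin (cf.\ $(A_2)$), while the periodic components $\bar x_{k+1},\ldots,\bar x_{k+\bar k}$ vary over one fundamental cell. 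The hypothesis $\bar k<(N-2)/2$ guarantees absolute convergence of the lattice sum.

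Next, I would set up the reduction in the weighted norms standard for this circle of problems (see \cite{BWY,Guo-Yan}). Writing $u=W_{\lambda,\bar x}+\phi$ and requiring $\phi$ to be orthogonal to the approximate kernel spanned by $\partial_\lambda W$ and $\partial_{\bar x_i} W$ (for $i$ running over the free parameters), the linearized operator is invertible on the orthogonal complement with norm bounded uniformly in $L$; a contraction mapping argument then solves the projected equation for a unique perturbation $\phi=\phi_{\lambda,\bar x}$ whose weighted norm is controlled by that of the error $R:=-\Delta W-M(x)W^{N/(N-2)}/|y|$. Estimating $R$ is the core technical step: each bubble--bubble interaction between translates contributes $O(L^{-(N-2)})$, while $O(L^{\bar k})$ translates sit at distance $\lesssim L$, so $\bar k<(N-2)/2$ is exactly what makes $\|R\|_{**}$ of strictly subcritical order. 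Here the Guo--Yan approach spares us the pointwise machinery of \cite{LWX} and the uniqueness argument of \cite{DLY}; only integrated weighted estimates are needed.

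Finally, I would analyze the reduced functional $F_L(\lambda,\bar x):=I_L(W_{\lambda,\bar x}+\phi_{\lambda,\bar x})$ attached to the periodic-strip energy $I_L$. Using $(A_2)$ and standard bubble integrals, its leading non-trivial correction has the schematic form
\begin{equation*}
F_L(\lambda,\bar x)=A_0+\lambda^{-\beta}\Bigl(B_0+\sum_{i=1}^{N} a_i B_i\,|\bar x_i|^{\beta_i}\Bigr)+\text{(lower order in $L$)},
\end{equation*}
with explicit positive constants coming from the bubble profile. Because $\sum_{j\in J}a_j<0$ and $J$ sits entirely in one coordinate block, the leading correction is concave in that block and coercive in the complementary one, so $F_L$ attains an interior critical point on a compact piece of the parameter set once $L$ is large enough; via the reduction this yields a genuine solution $u_L$ of \eqref{eq1.1}. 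The main obstacle throughout is ensuring that the higher-order corrections---both the $O(|x|^{\beta_M+\kappa})$ remainder from $(A_2)$ and the contribution of $\phi$---stay strictly smaller than the leading correction uniformly in $L$; the gap $\beta_M\le\beta(1+1/(N-2))$ in $(A_2)$, combined with $\bar k<(N-2)/2$, is precisely what provides the room to absorb these terms.
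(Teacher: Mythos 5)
The statement you are asked to prove is Corollary \ref{cor1.2}, which in the paper is a \emph{direct consequence} of Theorem \ref{th1.1}: the solution $u_L$ of \eqref{eq1.1} satisfies the periodic matching of $u$ and $u_{x_j}$ on the faces $z^1=\pm\frac{L}{2}e_j$, hence extends $L$-periodically in $(z_1,\dots,z_{\bar k})$ to a solution of \eqref{eq1.2} on all of $\mathbb{R}^N$, and the solutions obtained for distinct large integers $L$ are pairwise distinct because $\lambda_L=L^{\frac{N-2}{\beta-N+2}}\left(C_0+o(1)\right)\to\infty$. Your proposal instead re-derives (a sketch of) Theorem \ref{th1.1} and addresses neither point; in particular the claim of \emph{infinitely many} solutions is nowhere justified (an $L$-periodic function is also $mL$-periodic, so distinctness of the family requires an argument, which here is supplied by the divergence of the concentration parameter, not by periodicity alone).

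Within your sketch of the construction there is also a concrete error that would make the argument fail: you confine $\lambda$ to a bounded window $[\lambda_0,\lambda_1]$ and let the concentration center range over a ball of radius $\delta$ or over a fundamental cell. In the paper the parameters must satisfy \eqref{aeq2.4}, namely $|\hat x|\le\lambda^{-1-\theta}$ and $\lambda\sim L^{\frac{N-2}{\beta-N+2}}\to\infty$. The rate of $\lambda$ is forced by the balancing equation \eqref{eq3.26}, which equates the lattice-interaction term of order $\lambda^{-(N-1)}L^{-(N-2)}$ with the curvature term $\frac{F}{\lambda^{\beta+1}}\sum_{i\in J}a_i$; this is solvable for $\lambda_L>0$ precisely because $\sum_{j\in J}a_j<0$ and $\beta>N-2$. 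With $\lambda$ bounded, the error $l_L$ produced by $M(x)-1$ is of order one rather than $O(\lambda^{-1-\theta})$ (Lemma \ref{lm3.3}), so the contraction mapping step does not close; moreover your schematic expansion $F_L=A_0+\lambda^{-\beta}\bigl(B_0+\sum_i a_iB_i|\bar x_i|^{\beta_i}\bigr)+\cdots$ omits the interaction term altogether, so the concavity/coercivity argument you invoke does not detect the critical point that actually exists. Your identification of the role of $\bar k<\frac{N-2}{2}$ (convergence of the lattice sums in the weighted norms) is correct, but the remainder of the parameter analysis needs to be replaced by the paper's balancing of \eqref{eq3.25}--\eqref{eq3.26}, followed by periodic extension and the distinctness observation above.
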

	
	\begin{remark} \label{re1.3}
		The condition $J \subset \{1,2,\cdots,k\}$ or $J \subset \{k+1,k+2,\cdots,N\}$ is essential in the proof of Proposition \text{\rm \ref{proA.4}}. Moreover, the condition $1\le h\le k-1$ is equivalent to $\frac{N+1}{2}\le k\le N-1$, which is crucial for using Lemma B.2 in \text{\rm \cite{WWY}} in our paper.
	\end{remark}
	
	\begin{remark} \label{re1.4}
		Compared to \text{\rm \cite{BWY}}, we adopt the technique in \text{\rm \cite{Guo-Yan}} to construct periodic solutions for \eqref{eq1.2} under weaker conditions, avoiding the complicated estimates and uniqueness proof. 
	\end{remark}
	
	\begin{remark} \label{re1.5}
		The solutions constructed in Corollary \text{\rm \ref{cor1.2}} are periodic with respect to some intermediate variables, which is a distinction from those in \text{\rm \cite{Guo-Yan}}.
	\end{remark}
	
	Throughout this paper, we define $2^{\sharp}=\frac{2(N-1)}{N-2}$,
	$$
	\mathcal{D}^{1,2}\left(\mathbb{R}^{N}\right)=\left\{\int_{\mathbb{R}^{N}} \frac{|u|^{2^{\sharp}}}{|y|} \mathrm{d} x<+\infty:|\nabla u| \in L^{2}\left(\mathbb{R}^{N}\right)\right\}
	$$
	and $\mathcal{D}^{1,2}\left(\mathbb{R}^{N}\right)$ endows the norm $\|u\|=\left(\int_{\mathbb{R}^{N}} |\nabla u|^{2} \mathrm{d} x\right)^{\frac{1}{2}}$, which is induced by the inner product $\langle u, v\rangle=\int_{\mathbb{R}^{N}} \nabla u \cdot \nabla v \mathrm{d} x$.
	It is well known that (see \cite{FMS}) all the positive solutions to the following problem
	$$
	-\Delta u(x)=\frac{u^{\frac{N}{N-2}}(x)}{|y|}, \ u>0, \ x=(y,z)\in \mathbb{R}^{k} \times \mathbb{R}^{h}=\mathbb{R}^{N}, \ u \in \mathcal{D}^{1,2}(\mathbb{R}^N)
	$$
	are given by
	$$
	W_{\hat{x},\lambda}(y, z)=\frac{C_{N,k}\lambda^{\frac{N-2}{2}}}{\left((1+\lambda |y|)^2+\lambda^2|z-\hat{z}|^2\right)^{\frac{N-2}{2}}}, \ \hat{z} \in \mathbb{R}^{h}, \ \lambda>0,
	$$
	where $C_{N,k}=[(N-2)(k-1)]^{\frac{N-2}{2}}$ and $\hat{x}=(0,\hat{z})\in \mathbb{R}^{k} \times \mathbb{R}^{h}$.
	
	To prove Theorem \ref{th1.1}, we aim to construct a solution $u_{L}$ for \eqref{eq1.1}, which satisfies $u_{L} \approx W_{\hat{x},\lambda}$ for some $\hat{x}$ close to $0$ and $\lambda>0$ sufficiently large. To this end, taking the boundary conditions in \eqref{eq1.1} into consideration, we first construct the approximate solution $PW_{\hat{x},\lambda}$, which solves the following problem 
	\begin{align}\begin{aligned}\label{eq1.3}
			\begin{cases}
				-\Delta u(x)=\frac{{W_{\hat{x},\lambda}}^{\frac{N}{N-2}}(x)}{|y|}, & \text { in } \Omega, \\
				u\left(y, z^1+L e_{j},z^2\right)=u(x), u_{x_{j}}\left(y, z^1+L e_{j},z^2\right)=u_{x_{j}}(x), & \text { if } z^1=-\frac{L}{2} e_{j}, j=1, \cdots, \bar{k}, \\
				u \rightarrow 0, & \text { as }\left|y\right|+\left|z^2\right| \rightarrow+\infty.
			\end{cases}
		\end{aligned}
	\end{align}

	For any integer $\bar{k} \in[1, h]$, we define $\bar{k}$-dimensional lattice by
	$$
	Q_{\bar{k}}:=\left\{\left(y_1, \cdots, y_{\bar{k}}, 0\right)\in \mathbb{R}^{h}: y_{i} \text { is an integer, } i=1, \cdots, \bar{k};\, 0 \in \mathbb{R}^{h-\bar{k}}\right\}.
	$$
	We order all the points in $Q_{\bar{k}}$ as $\tilde{\left\{P^{j}\right\}}_{j=1}^{\infty}$ and $\tilde{P}^{0}=0$. Denote $P^{j}=(0,\tilde{P}^{j})\in \mathbb{R}^{N}$ and $P_{L}^{j}=(0,L\tilde{P}^{j})\in \mathbb{R}^{N}$, where $0 \in \mathbb{R}^{k}$.
	The Green's function $G(x,\xi)$ of $-\Delta$ in $\Omega$ satisfying the boundary conditions in \eqref{eq1.1} is given by
	$$
	G(x,\xi)=\sum_{j=0}^{\infty} \Gamma\left(x, \xi+P_{L}^{j}\right), \ x=(y,z),\xi=(\xi^1,\xi^2) \in \Omega,
	$$
	where
	$$
	\Gamma(x,\xi)=\frac{1}{(N-2)\omega_{N-1}}\frac{1}{|x-\xi|^{N-2}}
	$$
	and $\omega_{N-1}$ is the area of $\mathbb{S}^{N-1}$. Then by the Green representation formula, we have that
	\begin{equation*}\label{eq1.4}
		PW_{\hat{x},\lambda}(x)=\int_\Omega G(x,\xi)\frac{{W_{\hat{x},\lambda}}^{\frac{N}{N-2}}(\xi)}{|\xi^1|}\mathrm{d}\xi
	\end{equation*}
	solves \eqref{eq1.3}. Let us also point out that the bounded solution of \eqref{eq1.3} is unique. In fact, for any bounded solutions $u_1$ and $u_2$ of \eqref{eq1.3}, letting $v=u_1-u_2$, then $-\Delta v=0$ in $\Omega$, which can be extended periodically to $\mathbb{R}^N$. Thus $v$ must be a constant, which, together with $v \to 0$ as $\left|y\right|+\left|z^2\right| \rightarrow+\infty$, gives $v=0$.
	
	In order to construct a solution of the form $u_{L}=PW_{\hat{x}_{0,L}, \lambda_{L}}+\omega_{L}$, where $\hat{x}_{0,L}$ is close to $0$ and $\lambda_{L}>0$ is sufficiently large, we choose $\hat{x}_L^j=(0,\hat{z}_L^j)\in \mathbb{R}^{k} \times \mathbb{R}^{h}$, $\hat{x}=(0,\hat{z})\in \mathbb{R}^{k} \times \mathbb{R}^{h}$ and introduce the following norms:
	\begin{equation*}\label{eq1.5}
		\|u\|_{*}=\sup _{x \in \Omega}\Bigl(\sigma(x) \sum_{j=0}^{\infty} \frac{\lambda^{\frac{N-2}{2}}}{\left(1+\lambda\left|y\right|+\lambda\left|z-\hat{z}_L^j\right|\right)^{\frac{N-2}{2}+\tau}}\Bigr)^{-1}|u(x)|
	\end{equation*}
	and
	\begin{equation*}\label{eq1.6}
		\|f\|_{**}=\sup _{x \in \Omega}\Bigl(\sigma(x) \sum_{j=0}^{\infty} \frac{\lambda^{\frac{N}{2}}}{\left|y\right|\left(1+\lambda\left|y\right|+\lambda\left|z-\hat{z}_L^j\right|\right)^{\frac{N}{2}+\tau}} \Bigr)^{-1}|f(x)|,
	\end{equation*}
	where $\hat{z}_L^{j}=\hat{z}-L\tilde{P}^{j}, \hat{z} \in B_1(0) \subset \mathbb{R}^{h}$, $\sigma(x)=\min \left\{1,\left(\frac{1+\lambda\left|y\right|+\lambda\left|z-\hat{z}\right|}{\lambda}\right)^{\tau}\right\}$, $\tau=\frac{N-2}{2}-\theta$ and $\theta>0$ is a fixed and sufficiently small constant. With this choice of $\theta$, it follows that $\tau>\bar{k}$. Here, we follow \cite{LWX} (see also \cite{Guo-Peng-Yan,Guo-Wu1,Guo-Wu2,Guo-Wu-Yan,Guo-Yan,WWZ}) to add an extra weight $\sigma(x)$ in $\|u\|_{*}$ in order to improve the estimates for the error term $\omega_{L}$ in the case $2 \leq \bar{k} \leq \tau$.
	
	Let
	$$
	\mathbf{X}=\left\{\phi \in L^{\infty}(\Omega): \phi \text { satisfies the boundary conditions in \eqref{eq1.1} and }\|\phi\|_{*}<+\infty\right\}
	$$	
	and
	$$
	\mathbf{Y}=\left\{f \in L^{\infty}(\Omega):\|f\|_{**}<+\infty\right\}.
	$$
	
	To carry out the reduction arguments in $\mathbf{X}$, we first study the invertibility of the linear operator
	\begin{equation}\label{eq1.7}
		\tilde{L} \phi:=\phi-\left(2^{\sharp}-1\right)(-\Delta)^{-1} \Bigl[\frac{M(x)\left(PW_{\hat{x},\lambda}\right)^{2^{\sharp}-2} \phi}{|y|}\Bigr], \ \phi \in \mathbf{X},
	\end{equation}
	where the linear operator $(-\Delta)^{-1}$ satisfies
	$$
	(-\Delta)^{-1} f(x)=\int_{\Omega} G(x, \xi) f(\xi) \mathrm{d}\xi
	$$	
	for any $f \in \mathbf{Y}$. To prove the invertibility of the above linear operator, we will use the Fredholm theory. This requires us to show that $(-\Delta)^{-1} \Bigl[\frac{M(x)\left(PW_{\hat{x},\lambda}\right)^{2^{\sharp}-2} \phi}{|y|}\Bigr]$ is compact in $\mathbf{X}$. This compactness allows us to apply the contraction mapping theorem, which in turn proves the existence of the correction term $\omega_{L}$.
	
	The paper is organized as follows. In Section \ref{sec2}, we discuss the invertibility of the linear operator defined in \eqref{eq1.7}. Section \ref{sec3} is devoted to the proof of Theorem 1.1. In the Appendix \ref{secA}, we provide some essential estimates for the approximate solutions $PW_{\hat{x},\lambda}$. We would like to point out that in order to find the correction term $\omega_{L}$, we have to estimate the approximate solutions $PW_{\hat{x},\lambda}$. We believe that these estimates are interesting on their own. Also, they can be helpful in other problems which are related to harmonic equations and periodic boundary conditions.

	\section{The invertibility of the linear operator} \label{sec2}
	In this section, we prove the invertibility of the linear operator defined in \eqref{eq1.7}. Recall that we aim to find a solution of the form $PW_{\hat{x}_{0,L}, \lambda_{L}}+\omega_{L}$ for \eqref{eq1.1}. A direct computation then shows that $\omega_{L}$ satisfies
	\begin{equation}\label{eq2.1}
		-\Delta \omega_{L}-\left(2^{\sharp}-1\right) \frac{M(x) (PW_{\hat{x},\lambda})^{2^{\sharp}-2}\omega_{L}}{|y|}=l_{L}+N\left(\omega_{L}\right),
	\end{equation}
	where
	\begin{equation*}\label{eq2.2}
		l_{L}=\frac{M(x) (PW_{\hat{x}_{0,L}, \lambda_{L}})^{2^{\sharp}-1}-W_{\hat{x}_{0,L}, \lambda_{L}}^{2^{\sharp}-1}}{|y|}
	\end{equation*}
	and
	\begin{equation*}\label{eq2.3}
		N\left(\omega_{L}\right)=\frac{M(x)\left[\left(PW_{\hat{x}_{0,L}, \lambda_{L}}+\omega_{L}\right)_{+}^{2^{\sharp}-1}-(PW_{\hat{x}_{0,L}, \lambda_{L}})^{2^{\sharp}-1}-\left(2^{\sharp}-1\right) (PW_{\hat{x}_{0,L}, \lambda_{L}})^{2^{\sharp}-2} \omega_{L}\right]}{|y|}.
	\end{equation*}

	Throughout this paper, we always assume that 
	\begin{equation} \label{aeq2.4}
		|\hat{x}| \leq \frac{1}{\lambda^{1+\theta}}, \ \frac{C_{0}}{2} \leq \lambda L^{-\frac{N-2}{\beta-N+2}} \leq 2 C_{0},
	\end{equation} 
	for some $C_{0}>0$, where $\theta>0$ is a sufficiently small constant. We also introduce the following notations
	\begin{equation} \label{eq2.4}
		\partial_{i}\left(P W_{\hat{x},\lambda}\right)=\begin{cases}
			\frac{\partial P W_{\hat{x},\lambda}}{\partial \hat{z}_{i}} & \text { if } i=1, \cdots, h, \\
			\frac{\partial P W_{\hat{x},\lambda}}{\partial \lambda} & \text { if } i=h+1,
		\end{cases}
	\end{equation} 
	\begin{equation}\label{eq2.5}
		\alpha(i)=\begin{cases}
			1 & \text { if } i=1, \cdots, h, \\
			-1 & \text { if } i=h+1
		\end{cases}
	\end{equation} 
	and
	\begin{equation}\label{eq2.6}
		\psi_{0}=\left.\frac{\partial W_{0, \lambda}}{\partial \lambda}\right|_{\lambda=1}, \ \psi_{i}=\frac{\partial W_{0,1}}{\partial \hat{z}_{i}}, \ i=1, \cdots, h.
	\end{equation}

	For any $f \in \mathbf{Y}$, we recall that 
	$$
	(-\Delta)^{-1} f:=\int_{\Omega} G(x, \xi) f(\xi) \mathrm{d}\xi.
	$$
	
	\begin{lemma}\label{lm2.1}
		$(-\Delta)^{-1}$ is a bounded linear operator from $\mathbf{Y}$ to $\mathbf{X}$.
	\end{lemma}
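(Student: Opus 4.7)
Linearity is immediate, so the content of the lemma is the weighted pointwise estimate $\|(-\Delta)^{-1} f\|_{*} \leq C \|f\|_{**}$. I first note that $u := (-\Delta)^{-1} f$ automatically satisfies the periodic boundary conditions of \eqref{eq1.3}, because $G(x,\xi) = \sum_{j} \Gamma(x, \xi + P_{L}^{j})$ is constructed as a lattice sum precisely to enforce $L$-periodicity along the first $\bar{k}$ directions of $z$. Hence it suffices to bound $|u(x)|$ pointwise by a constant multiple of the weight defining $\|u\|_{*}$.

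The key step is to unfold the Green-function integral over $\Omega$ into a full-space Newton-kernel convolution. The change of variable $\eta = \xi + P_{L}^{j}$ in the $j$-th summand, together with the fact that $\{\Omega + P_{L}^{j}\}_{j}$ tiles $\mathbb{R}^{N}$, gives
$$u(x) = \sum_{j} \int_{\Omega} \Gamma(x, \xi + P_{L}^{j}) f(\xi) \, \mathrm{d}\xi = \int_{\mathbb{R}^{N}} \Gamma(x, \eta) \tilde{f}(\eta) \, \mathrm{d}\eta,$$
where $\tilde{f}$ denotes the periodic extension of $f$ in the first $\bar{k}$ coordinates of $z$. Because the set $\{\hat{z}_{L}^{j}\}$ of bubble centers is itself invariant under shifts by $L\tilde{P}^{i}$, the definition of $\|f\|_{**}$ produces a bound
$$|\tilde{f}(\eta)| \leq C \|f\|_{**} \sum_{j=0}^{\infty} \frac{\lambda^{N/2}}{|y_{\eta}|(1+\lambda|y_{\eta}|+\lambda|z_{\eta} - \hat{z}_{L}^{j}|)^{N/2 + \tau}},$$
where $(y_{\eta}, z_{\eta})$ denote the $y$- and $z$-components of $\eta$.

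The remaining ingredient is the standard convolution estimate for the Newton kernel against a bubble-type weight, which is essentially Lemma B.2 in \cite{WWY}: for each $j$,
$$\int_{\mathbb{R}^{N}} \frac{1}{|x-\eta|^{N-2}} \cdot \frac{\lambda^{N/2}}{|y_{\eta}|(1+\lambda|y_{\eta}|+\lambda|z_{\eta}-\hat{z}_{L}^{j}|)^{N/2+\tau}} \, \mathrm{d}\eta \leq C \frac{\lambda^{(N-2)/2}}{(1+\lambda|y|+\lambda|z-\hat{z}_{L}^{j}|)^{(N-2)/2 + \tau}}.$$
Summing over $j$ reproduces exactly the weight appearing in $\|u\|_{*}$, and the sum converges because $\tau > \bar{k}$ by the choice of $\theta$.

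The main obstacle I anticipate is the consistent handling of the outer weight $\sigma$, which is not lattice-periodic and therefore does not pass cleanly to $\tilde{f}$. Following \cite{LWX} (see also \cite{Guo-Yan}), I would split the pointwise verification into the outer region $\{\sigma(x) = 1\}$, where the above estimate gives the bound directly, and the inner region $\{\sigma(x) < 1\}$, in which the factor $\sigma(x)$ required on the output side is recovered by exploiting a pointwise comparison of the form $\sigma(x) \leq C \sigma(\eta)(1+\lambda|x-\eta|)^{\tau}$ for $\eta$ close to $x$ and absorbing the excess into the fast decay of the Newton kernel. This is the same mechanism that motivated the introduction of $\sigma$ into the norms to begin with, namely to make the estimates work in the range $2 \leq \bar{k} \leq \tau$.
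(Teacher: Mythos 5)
Your overall strategy -- unfolding the lattice Green's function into a full-space Newtonian potential against the periodic extension of the data and then invoking the weighted convolution estimate of Lemma B.2 in \cite{WWY} -- is exactly the paper's route. The gap is in your treatment of $\sigma$. Your displayed bound on $|\tilde f(\eta)|$ discards the factor $\sigma(\eta)$ from the definition of $\|f\|_{**}$, and after that the conclusion you need is simply false in the inner region: at $x=\hat x$ one computes
\begin{equation*}
\int_{\mathbb{R}^N}\Gamma(\hat x,\eta)\,\frac{\lambda^{N/2}}{|\eta^1|\bigl(1+\lambda|\eta^1|+\lambda|\eta^2-\hat z|\bigr)^{\frac N2+\tau}}\,\mathrm{d}\eta\;\geq\;c\,\lambda^{\frac{N-2}{2}},
\end{equation*}
whereas the target $\sigma(\hat x)\,\lambda^{\frac{N-2}{2}}$ is smaller by the factor $\sigma(\hat x)\sim\lambda^{-\tau}$. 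So the $\sigma(\eta)$ on the source side carries essential decay and cannot be recovered a posteriori. Your proposed repair does not close this: the comparison $\sigma(x)\leq C\sigma(\eta)(1+\lambda|x-\eta|)^{\tau}$ is in the wrong direction for an upper bound, and its usable form $\sigma(\eta)\leq C\sigma(x)(1+\lambda|x-\eta|)^{\tau}$ inserts a polynomial factor of order $\tau=\frac{N-2}{2}-\theta$ into the kernel, which worsens the convolution estimate by roughly $(1+\lambda|x-\hat x|)^{2\tau}$ rather than being ``absorbed into the fast decay of the Newton kernel.''

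The correct mechanism (used explicitly in the paper's proof of Lemma \ref{lm2.4}, see \eqref{eqs2.41}--\eqref{eqs2.42}, and implicit in its one-line proof of Lemma \ref{lm2.1}) is to keep $\sigma(\eta)$, extend it periodically, and in the region $B_1(\hat x)$ replace it by its explicit value $\bigl((1+\lambda|\eta^1|+\lambda|\eta^2-\hat z|)/\lambda\bigr)^{\tau}$. This lowers the decay exponent of the source from $\frac N2+\tau$ to $\frac N2$ and introduces the prefactor $\lambda^{-\tau}$; the convolution lemma applied at exponent $\frac N2$ (this is precisely where the hypothesis $N\geq 5$ enters, which your argument never uses) yields $\lambda^{\frac{N-2}{2}-\tau}(1+\lambda|y|+\lambda|z-\hat z|)^{-\frac{N-2}{2}}$, and this expression is identified exactly as $\sigma(x)$ times the target weight. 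With that substitution for your final paragraph, the proof would match the paper's.
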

	\begin{proof}
		We extend the function $\sigma(\xi)$ periodically to $\mathbb{R}^{N}$. For any $f \in \mathbf{Y}$, then we have
		\begin{align*}
			|u(x)|=\left|(-\Delta)^{-1} f\right| & \leq\|f\|_{**} \int_{\Omega} G(x, \xi) \sigma(\xi) \sum_{j=0}^{\infty}\frac{\lambda^{\frac{N}{2}}}{\left|\xi^1\right|\left(1+\lambda\left|\xi^1\right|+\lambda\left|\xi^2-\hat{z}_L^j\right|\right)^{\frac{N}{2}+\tau}} \mathrm{d}\xi \\
			& =\|f\|_{**} \int_{\Omega} \sum_{j=0}^{\infty} \Gamma\left(x, \xi+P^{j}_L\right) \sigma(\xi) \sum_{j=0}^{\infty}\frac{\lambda^{\frac{N}{2}}}{\left|\xi^1\right|\left(1+\lambda\left|\xi^1\right|+\lambda\left|\xi^2-\hat{z}_L^j\right|\right)^{\frac{N}{2}+\tau}} \mathrm{d}\xi \\
			& =\|f\|_{**} \int_{\mathbb{R}^{N}} \Gamma(x, \xi) \sigma(\xi) \sum_{j=0}^{\infty}\frac{\lambda^{\frac{N}{2}}}{\left|\xi^1\right|\left(1+\lambda\left|\xi^1\right|+\lambda\left|\xi^2-\hat{z}_L^j\right|\right)^{\frac{N}{2}+\tau}} \mathrm{d}\xi \\
			& \leq C\|f\|_{**} \sigma(x) \sum_{j=0}^{\infty}\frac{\lambda^{\frac{N-2}{2}}}{\left(1+\lambda\left|y\right|+\lambda\left|z-\hat{z}_L^j\right|\right)^{\frac{N-2}{2}+\tau}},
		\end{align*}
		where we use Lemma B.2 of \cite{WWY} and $N \geq 5$ in the last inequality.
		This gives that $\|u\|_{*} \leq C\|f\|_{**}.$ So $(-\Delta)^{-1}$ is a bounded linear operator from $\mathbf{Y}$ to $\mathbf{X}$.
	\end{proof}
	
	We rewrite \eqref{eq2.1} as 
	\begin{equation}\label{eq2.7}
		\omega_{L}-\left(2^{\sharp}-1\right)(-\Delta)^{-1}\Bigl(\frac{M(x) (PW_{\hat{x},\lambda})^{2^{\sharp}-2}\omega_{L}}{|y|}\Bigr)=(-\Delta)^{-1} l_{L}+(-\Delta)^{-1} N\left(\omega_{L}\right).
	\end{equation}
	For $\hat{z} \in B_1(0) \subset \mathbb{R}^{h}$ and $\lambda>0$ sufficiently large, we denote
	\begin{equation*}\label{eq2.8}
		Z_{i}=\begin{cases}
			\frac{\partial W_{\hat{x},\lambda}}{\partial \hat{z}_{i}} & \text { if } i=1, \cdots, h, \\
			\frac{\partial W_{\hat{x},\lambda}}{\partial \lambda} & \text { if } i=h+1.
		\end{cases}
	\end{equation*}
	Noting that for any $f \in \mathbf{Y}$ and $u=(-\Delta)^{-1} f \in \mathbf{X}$, we have
	\begin{equation*}
		\begin{aligned}
			\int_{\Omega} f \partial_{i}\left(P W_{\hat{x},\lambda}\right) & =-\int_{\Omega}\Delta u \partial_{i}\left(P W_{\hat{x},\lambda}\right) =-\int_{\Omega}\Delta\left(\partial_{i}\left(P W_{\hat{x},\lambda}\right)\right) u 
			=\left(2^{\sharp}-1\right) \int_{\Omega} \frac{u W_{\hat{x},\lambda}^{2^{\sharp}-2} Z_{i}}{|\xi^1|},
		\end{aligned}
	\end{equation*}
	where $\partial_{i}\left(P W_{\hat{x},\lambda}\right)$ is defined in \eqref{eq2.4}.
	Let
	\begin{equation*}\label{eq2.9}
		\mathbf{E}:=\left\{\phi \in \mathbf{X} \,: \int_{\Omega} \frac{\phi W_{\hat{x},\lambda}^{2^{\sharp}-2} Z_{i}}{|\xi^1|}=0,\, i=1, \cdots, h+1\right\}
	\end{equation*} 
	and
	\begin{equation*}\label{eq2.10}
		\mathbf{F}:=\left\{f \in \mathbf{Y} \,: \int_{\Omega} f \partial_{i}\left(P W_{\hat{x},\lambda}\right)=0,\, i=1, \cdots, h+1\right\}.
	\end{equation*} 
	Then, we see that $f \in \mathbf{F}$ if and only if $u=(-\Delta)^{-1} f \in \mathbf{E}$.
	
	We aim to find a solution of the form $P W_{\hat{x}_{0,L}, \lambda_{L}}+\omega_{L}$ for \eqref{eq1.1}, where $\omega_{L} \in \mathbf{E}$ and $\left\|\omega_{L}\right\|_{*}$ is sufficiently small. To achieve this goal, we first prove that for fixed $(\hat{x},\lambda)$, there exists a smooth function 
	$\omega \in \mathbf{E}$, such that
	\begin{equation*}\label{eq2.11}
		-\Delta\left(P W_{\hat{x},\lambda}+\omega\right)-\frac{M(x)\left(P W_{\hat{x},\lambda}+\omega\right)^{2^{\sharp}-1}}{|y|}=\sum_{i=1}^{h+1} c_{i} \frac{W_{\hat{x},\lambda}^{2^{\sharp}-2} Z_{i}}{|y|}
	\end{equation*} 
	for some constants $c_{i}$. Then, we demonstrate the existence of $(\hat{x},\lambda)=\left(\hat{x}_{0,L}, \lambda_{L}\right)$ such that
	\begin{equation*}\label{eq2.12}
		\int_{\Omega}-\Delta\left(P W_{\hat{x},\lambda}+\omega\right) \partial_{i}\left(P W_{\hat{x},\lambda}\right)-\int_{\Omega} \frac{M(\xi)\left(P W_{\hat{x},\lambda}+\omega\right)^{2^{\sharp}-1}\partial_{i}\left(P W_{\hat{x},\lambda}\right)}{|\xi^1|}=0,
	\end{equation*} 
	for $i=1, \cdots, h+1$. With this $\left(\hat{x}_{0,L}, \lambda_{L}\right)$, we can prove that all $c_{i}$ must be zero.
	
	We first define the operator $\mathbf{P}$ as follows:
	\begin{equation*}\label{eq2.13}
		\mathbf{P} f=f+\sum_{i=1}^{h+1} c_{i} \frac{W_{\hat{x},\lambda}^{2^{\sharp}-2} Z_{i}}{|y|}, \ f \in \mathbf{Y},
	\end{equation*} 
	where $c_{i}$ is chosen such that $\mathbf{P} f \in \mathbf{F}$. Then it is straightforward to verify that $\mathbf{P}$ is a bounded operator, that is 
	$$
	\|\mathbf{P} f\|_{**} \leq C\|f\|_{**},
	$$ 
	for some constant $C>0$.
	
	In the following, for a fixed $(\hat{x},\lambda)$, instead of \eqref{eq2.7}, we consider the following problem
	\begin{equation*}\label{eq2.14}
		\omega-T \omega=(-\Delta)^{-1}\left(\mathbf{P} l\right)+(-\Delta)^{-1}(\mathbf{P} N(\omega)),
	\end{equation*} 
	where
	\begin{equation}\label{eq2.15}
		T\omega=\left(2^{\sharp}-1\right)(-\Delta)^{-1}\Bigl[\mathbf{P}\Bigl(\frac{M(x)\left(P W_{\hat{x},\lambda}\right)^{2^{\sharp}-2} \omega}{|y|}\Bigr)\Bigr].
	\end{equation} 
	
	We now prove that the linear operator $I-T$ is invertible. According to Fredholm theorem, it is sufficient to prove that $T$ is a bounded compact operator and $I-T$ is injective. We begin with the following result.
	\begin{proposition}\label{pro2.2}
		$I-T$ is a bijective bounded linear operator from $\mathbf{E}$ to itself.
	\end{proposition}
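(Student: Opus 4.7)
The plan is to apply the Fredholm alternative in $\mathbf{E}$: it suffices to verify that (i) $T$ maps $\mathbf{E}$ boundedly into itself, (ii) $T$ is compact, and (iii) $I-T$ is injective. The main obstacle will be (iii); (i) and (ii) are routine once the weighted norms and Lemma \ref{lm2.1} are in place.

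For (i), the key pointwise estimate is
$$\left\|\frac{M(x)\bigl(PW_{\hat{x},\lambda}\bigr)^{2^{\sharp}-2}\omega}{|y|}\right\|_{**}\le C\,\|\omega\|_{*},$$
which follows from the decay bounds on $PW_{\hat{x},\lambda}$ established in Appendix \ref{secA} together with the very definitions of $\|\cdot\|_{*}$ and $\|\cdot\|_{**}$; the extra factor $\sigma(x)$ built into $\|\cdot\|_{*}$ is precisely what allows this estimate in the range $2\le\bar k\le\tau$. Composing with the bounded projection $\mathbf{P}:\mathbf{Y}\to\mathbf{Y}$ and the bounded operator $(-\Delta)^{-1}:\mathbf{Y}\to\mathbf{X}$ of Lemma \ref{lm2.1} yields boundedness of $T:\mathbf{X}\to\mathbf{X}$; and since $\mathbf{P}f\in\mathbf{F}$ implies $(-\Delta)^{-1}(\mathbf{P}f)\in\mathbf{E}$ by the integration-by-parts identity preceding \eqref{eq2.15}, $T$ sends $\mathbf{E}$ into itself. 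For (ii), given a bounded sequence $\{\omega_n\}\subset\mathbf{E}$, the above estimate together with interior elliptic regularity gives equicontinuity of $\{T\omega_n\}$ on compact subsets of $\Omega$, so a diagonal Arzel\`a-Ascoli argument produces a locally uniform limit; the tail decay encoded in the $*$-norm then upgrades this to convergence in $\|\cdot\|_{*}$.

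The heart of the argument is (iii), which I would prove by contradiction. Suppose sequences $L_n\to\infty$, $(\hat{x}_n,\lambda_n)$ satisfying \eqref{aeq2.4}, and $\omega_n\in\mathbf{E}$ exist with $\|\omega_n\|_{*}=1$ and $(I-T)\omega_n=0$. Unfolding the definition of $T$ produces constants $c_{i,n}$ with
$$-\Delta\omega_n-(2^{\sharp}-1)\frac{M(x)\bigl(PW_{\hat{x}_n,\lambda_n}\bigr)^{2^{\sharp}-2}\omega_n}{|y|}=\sum_{i=1}^{h+1}c_{i,n}\frac{W_{\hat{x}_n,\lambda_n}^{2^{\sharp}-2}Z_i}{|y|}.$$
Testing against $\partial_j(PW_{\hat{x}_n,\lambda_n})$ and using the fact that the matrix $\bigl(\int W^{2^{\sharp}-2}Z_iZ_j/|y|\bigr)_{ij}$ is essentially diagonal with nondegenerate eigenvalues (independent of $\lambda_n$), while the left-hand side is $o(1)$ after integration by parts and the orthogonality relations defining $\mathbf{E}$, forces $c_{i,n}\to0$. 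Next I rescale $\tilde\omega_n(x):=\lambda_n^{-(N-2)/2}\omega_n\bigl(\lambda_n^{-1}x+\hat{x}_n\bigr)$: the uniform bound $\|\tilde\omega_n\|_{L^\infty}\le C$ combined with the local uniform convergence $PW_{\hat{x}_n,\lambda_n}\to W_{0,1}$ after rescaling and $M\to1$ near the concentration point produces, along a subsequence, a bounded solution $\tilde\omega_\infty$ of
$$-\Delta\tilde\omega_\infty=(2^{\sharp}-1)\frac{W_{0,1}^{2^{\sharp}-2}\tilde\omega_\infty}{|y|}\quad\text{in }\mathbb{R}^N.$$
By the non-degeneracy of $W_{0,1}$ (see \cite{FMS,WWY}), $\tilde\omega_\infty=\sum_{i=0}^{h}b_i\psi_i$; the orthogonality relations $\int\omega_n W_{\hat{x}_n,\lambda_n}^{2^{\sharp}-2}Z_i/|y|=0$ then pass to the limit and force $b_i=0$, so $\tilde\omega_\infty\equiv0$. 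Repeating the blow-up at each translated bubble $P_L^j$ and using the rapid inter-bubble decay built into the sum in $\|\cdot\|_{*}$, one deduces $\|\omega_n\|_{*}\to0$, contradicting $\|\omega_n\|_{*}=1$. The genuine difficulty is to synchronize the blow-up simultaneously at all infinitely many lattice centers $P_L^j$ and recover uniform decay between them; this is where the condition $1\le\bar k<(N-2)/2$ (which guarantees $\tau>\bar k$) and Lemma B.2 of \cite{WWY} enter in an essential way.
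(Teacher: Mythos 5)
Your proposal follows essentially the same route as the paper: establish that $T$ is bounded on $\mathbf{E}$, that it is compact (via interior elliptic regularity plus Arzel\`a--Ascoli and the tail decay of the $\|\cdot\|_{*}$-norm), and that $I-T$ is injective via a blow-up/nondegeneracy contradiction, and then invoke the Fredholm alternative. Each step you sketch corresponds to the paper's Lemmas \ref{lm2.3}--\ref{lm2.6}, and the blow-up argument in your step (iii) is precisely the one in Lemma \ref{lm2.6}. So the approach matches and is correct.

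One point worth clarifying, because it could lead you astray when writing the argument out: you describe the ``genuine difficulty'' as synchronizing the blow-up simultaneously at all translated bubble centers $P_L^j$. In this setting that difficulty does not arise, because the bubble is placed only at $\hat{x}$ near the origin and the translates $\hat{x}_L^j = \hat{x}-L\tilde{P}^j$ for $j\geq 1$ lie \emph{outside} the fundamental strip $\Omega$ (for $i\le\bar k$, the $i$-th coordinate of $\hat{x}_L^j$ has modulus $\geq L-|\hat{z}_i|>L/2$). The summation over $j$ in $\|\cdot\|_{*}$ and $\|\cdot\|_{**}$ is the Green's-function tail from the periodic extension, not a family of concentration points inside $\Omega$. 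Consequently, the a priori inequality \eqref{eqs2.68} localizes the maximum of the normalized $|\phi_n|$ to $B_{R\lambda_n^{-1}}(0)$, so a \emph{single} blow-up at $\hat{x}^n$ suffices; the tail contributions for $x$ away from $\hat{x}^n$ are absorbed by the small factor $\bigl(1+\lambda|y|+\lambda|z-\hat{z}|\bigr)^{-\theta}$ coming from the ratio on the right of \eqref{eqs2.68}, together with the weight $\sigma(x)$. If you instead tried to run simultaneous blow-ups at every $\hat{x}_L^j$, you would be working outside $\Omega$ and duplicating the same limit. With this one caveat, your argument is sound and reproduces the paper's proof.
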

	
	We have
	\begin{equation*}\label{eq2.16}
		\mathbf{P}\Bigl(\frac{M(x)\left(P W_{\hat{x},\lambda}\right)^{2^{\sharp}-2} \phi}{|y|}\Bigr)=\frac{M(x)\left(P W_{\hat{x},\lambda}\right)^{2^{\sharp}-2} \phi}{|y|}+\sum_{i=1}^{h+1} c_{i} \frac{W_{\hat{x},\lambda}^{2^{\sharp}-2} Z_{i}}{|y|}, \ \phi \in \mathbf{E},
	\end{equation*} 
	where the constants $c_{i}$ are determined by
	\begin{equation}\label{eq2.17}
		\begin{aligned}
			\sum_{i=1}^{h+1} c_{i} \int_{\Omega} \frac{W_{\hat{x},\lambda}^{2^{\sharp}-2} Z_{i}\partial_{h}\left(P W_{\hat{x},\lambda}\right)}{|\xi^1|} =-\int_{\Omega} \frac{M(\xi)\left(P W_{\hat{x},\lambda}\right)^{2^{\sharp}-2} \phi\partial_{h}\left(P W_{\hat{x},\lambda}\right)}{|\xi^1|}, \ i=1, \cdots, h+1.
		\end{aligned}
	\end{equation}
	
	We divide the proof of Proposition \ref{pro2.2} into the following lemmas.
	\begin{lemma}\label{lm2.3}
		We have
		\begin{equation}\label{eq2.18}
			\Bigl|\int_{\Omega} \frac{M(\xi)\left(P W_{\hat{x},\lambda}\right)^{2^{\sharp}-2} \omega\partial_{i}\left(P W_{\hat{x},\lambda}\right)}{|\xi^1|}\Bigr| \leq \frac{C \lambda^{\alpha(i)}}{\lambda^{\beta}}\|\omega\|_{*}, \ \omega \in \mathbf{E}
		\end{equation} 	
		and
		\begin{equation}\label{eq2.19}
			|c_{i}| \leq \frac{C}{\lambda^{\alpha(i)+\beta}}\|\omega\|_{*},
		\end{equation} 	
		where $\alpha(i)$ is defined in \eqref{eq2.5}.
	\end{lemma}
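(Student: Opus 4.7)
My plan is to prove the estimate \eqref{eq2.18} first, using the orthogonality built into $\mathbf{E}$, and then deduce \eqref{eq2.19} from \eqref{eq2.18} by inverting the (almost) diagonal linear system \eqref{eq2.17}.

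\textbf{Step 1 (Exploit orthogonality).} I denote $I_i$ the integral on the left of \eqref{eq2.18}. Since $\omega \in \mathbf{E}$ gives $\int_{\Omega} \omega W_{\hat{x},\lambda}^{2^{\sharp}-2} Z_i/|\xi^1|\,d\xi = 0$, I rewrite
$$I_i = \int_{\Omega} \frac{\omega}{|\xi^1|}\Bigl[M(\xi)\,(PW_{\hat{x},\lambda})^{2^{\sharp}-2}\,\partial_i(PW_{\hat{x},\lambda}) - W_{\hat{x},\lambda}^{2^{\sharp}-2} Z_i\Bigr]\,d\xi.$$
The bracket is small for two independent reasons: $M(\xi)-1$ is small where the integrand is concentrated (by $(A_2)$), and $PW_{\hat{x},\lambda}$ together with $\partial_i(PW_{\hat{x},\lambda})$ is close to $W_{\hat{x},\lambda}$ and $Z_i$ respectively (by the appendix estimates of Section \ref{secA}).

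\textbf{Step 2 (Decompose the bracket).} I split the bracket into (i) $(M(\xi)-1)\,W_{\hat{x},\lambda}^{2^{\sharp}-2} Z_i$; (ii) $M(\xi)\bigl[(PW_{\hat{x},\lambda})^{2^{\sharp}-2}\partial_i(PW_{\hat{x},\lambda}) - W_{\hat{x},\lambda}^{2^{\sharp}-2} Z_i\bigr]$. For piece (i), the effective support of $W_{\hat{x},\lambda}^{2^{\sharp}-2} Z_i$ lies within distance $O(\lambda^{-1})$ of $\hat{x}$; combined with $|\hat{x}|\le \lambda^{-1-\theta}$ this gives $|\xi|\lesssim \lambda^{-1}$ there, so by $(A_2)$ one has $|M(\xi)-1|\lesssim |\xi|^{\beta}\lesssim \lambda^{-\beta}$. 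Using $|Z_i|\lesssim \lambda^{\alpha(i)} W_{\hat{x},\lambda}$ and testing against $|\omega|$ via $\|\omega\|_{*}$ and the definition of that norm, I obtain a bound of order $\lambda^{\alpha(i)-\beta}\|\omega\|_{*}$, where the remaining integral of $W_{\hat{x},\lambda}^{2^{\sharp}-1}/|\xi^1|$ against a sum of bubble profiles is $O(1)$ thanks to the convergence enforced by $\tau>\bar k$. For piece (ii), I invoke Appendix~\ref{secA} to estimate $|PW_{\hat{x},\lambda}-W_{\hat{x},\lambda}|$ and $|\partial_i(PW_{\hat{x},\lambda})-Z_i|$ by geometric tails of order $L^{-(N-2)}$ times the natural scales. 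The scaling condition \eqref{aeq2.4} converts $L^{-(N-2)}$ into $\lambda^{-(\beta-N+2)}$, and together with the inner decay and the $\lambda^{\alpha(i)}$ loss from the derivative this again yields $\lambda^{\alpha(i)-\beta}\|\omega\|_{*}$.

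\textbf{Step 3 (Invert the system for $c_i$).} Reading \eqref{eq2.17} as a system with running test index $j=1,\dots,h+1$, it has the form $\sum_i c_i A_{ji} = -I_j^{M}$ with $A_{ji}=\int_{\Omega} W_{\hat{x},\lambda}^{2^{\sharp}-2} Z_i\,\partial_j(PW_{\hat{x},\lambda})/|\xi^1|$. Using the parity/radial symmetry of the unperturbed $W_{0,\lambda}$ together with the smallness $|\hat{x}|\le\lambda^{-1-\theta}$ and the closeness of $PW_{\hat{x},\lambda}$ to $W_{\hat{x},\lambda}$ from Appendix~\ref{secA}, the matrix $A=(A_{ji})$ is almost diagonal: $A_{jj}\asymp \lambda^{2\alpha(j)}$ and $A_{ji}=o(\lambda^{\alpha(i)+\alpha(j)})$ for $i\neq j$. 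A diagonal-dominance inversion therefore gives $|c_j|\lesssim |I_j|/\lambda^{2\alpha(j)}$, and combining with \eqref{eq2.18} yields $|c_j|\lesssim \lambda^{-\alpha(j)-\beta}\|\omega\|_{*}$, which is \eqref{eq2.19}.

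\textbf{Main obstacle.} The delicate step is piece (ii) of Step 2, where one needs a uniform-in-$j$ estimate of the error $(PW_{\hat{x},\lambda})^{2^{\sharp}-2}\partial_i(PW_{\hat{x},\lambda})-W_{\hat{x},\lambda}^{2^{\sharp}-2}Z_i$ integrated against the weighted sum of bubble tails that defines $\|\omega\|_{*}$. The bookkeeping of the $\sigma(\xi)$ weight and the lattice sum over $j$ must be arranged so that, after using \eqref{aeq2.4}, the powers of $L$ become the correct powers of $\lambda$, and so that the convergence of the lattice series is guaranteed by $\tau>\bar k$.
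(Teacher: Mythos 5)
Your proposal is correct and follows essentially the same route as the paper: after using the orthogonality $\int_\Omega \omega W_{\hat{x},\lambda}^{2^{\sharp}-2}Z_i/|\xi^1|=0$, you decompose the integrand into a $(M-1)$-piece and a projection-error piece (the paper's $J_1,J_2,J_3$ are just a slightly different arrangement of the same three contributions), estimate them via the Appendix expansions of $\varphi_{\hat{x},\lambda}$ and the scaling relation \eqref{aeq2.4} converting $L^{-(N-2)}$ into $\lambda^{-(\beta-N+2)}$, and then obtain \eqref{eq2.19} by inverting the nearly diagonal matrix with entries $\asymp\lambda^{2\alpha(i)}\delta_{ih}$, exactly as in \eqref{eqs2.38}. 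The only point to tighten is your "effective support" heuristic in piece (i): since the bubbles decay only polynomially, the region $\Omega\setminus B_1(\hat{x})$ must be handled separately (as in \eqref{eqs2.33}), where $M-1$ is merely bounded but the tail decay supplies the extra $\lambda^{-(N-1-\theta)}$, which is dominated by $\lambda^{-\beta}$ because $\beta<N-1$.
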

	
	\begin{proof}
		Write
		\begin{equation*}\label{eq2.20}
			\begin{aligned}
				& \quad \int_{\Omega} \frac{M(\xi)\left(P W_{\hat{x},\lambda}\right)^{2^{\sharp}-2} \omega\partial_{i}\left(P W_{\hat{x},\lambda}\right)}{|\xi^1|} \\
				& =\int_{\Omega} \frac{M(\xi)\left[\left(P W_{\hat{x},\lambda}\right)^{2^{\sharp}-2}- W_{\hat{x},\lambda}^{2^{\sharp}-2}\right] \omega\partial_{i}\left(P W_{\hat{x},\lambda}\right)}{|\xi^1|} \\
				& \quad +\int_{\Omega} \frac{(M(\xi)-1)W_{\hat{x},\lambda}^{2^{\sharp}-2} \omega\partial_{i}\left(P W_{\hat{x},\lambda}\right)}{|\xi^1|} +\int_{\Omega} \frac{W_{\hat{x},\lambda}^{2^{\sharp}-2} \omega\partial_{i}\left(P W_{\hat{x},\lambda}\right)}{|\xi^1|} \\
				& :=J_1+J_2+J_3.
			\end{aligned}
		\end{equation*} 	
		By Lemma \ref{lmA.1}, it follows that
		\begin{align*}
			\left|J_1\right| & \leq C \lambda^{\alpha(i)}\int_{\Omega} \frac{\left|\left(P W_{\hat{x},\lambda}\right)^{2^{\sharp}-2}- W_{\hat{x},\lambda}^{2^{\sharp}-2}\right| |\omega| P W_{\hat{x},\lambda}}{|\xi^1|} \\
			& \leq C\|\omega\|_{*} \lambda^{\alpha(i)} \int_{\Omega} \frac{\left|\left(P W_{\hat{x},\lambda}\right)^{2^{\sharp}-2}- W_{\hat{x},\lambda}^{2^{\sharp}-2}\right| P W_{\hat{x},\lambda}}{|\xi^1|}\sum_{j=0}^{\infty} \frac{\lambda^{\frac{N-2}{2}}}{\left(1+\lambda\left|\xi^1\right|+\lambda\left|\xi^2-\hat{z}_L^j\right|\right)^{\frac{N-2}{2}+\tau}} \\
			& \leq C\|\omega\|_{*} \lambda^{\alpha(i)} \int_{\Omega} \frac{\left(P W_{\hat{x},\lambda}\right)^{2^{\sharp}-2}|\varphi_{\hat{x},\lambda}|}{|\xi^1|}\sum_{j=0}^{\infty} \frac{\lambda^{\frac{N-2}{2}}}{\left(1+\lambda\left|\xi^1\right|+\lambda\left|\xi^2-\hat{z}_L^j\right|\right)^{\frac{N-2}{2}+\tau}} \\
			& \leq C\|\omega\|_{*} \lambda^{\alpha(i)} \int_{\Omega} \frac{\left(\sum_{j=0}^{\infty} W_{\hat{x}_{L}^{j}, \lambda}\right)^{2^{\sharp}-2}\left|\varphi_{\hat{x},\lambda}\right|}{|\xi^1|} \sum_{j=0}^{\infty}\frac{\lambda^{\frac{N-2}{2}}}{\left(1+\lambda\left|\xi^1\right|+\lambda\left|\xi^2-\hat{z}_L^j\right|\right)^{\frac{N-2}{2}+\tau}},
		\end{align*} 	
		where
		\begin{equation*}\label{eq2.22}
			\varphi_{\hat{x},\lambda}=W_{\hat{x},\lambda}-P W_{\hat{x},\lambda}.
		\end{equation*}
		For $\xi \in B_1(\hat{x})$, we have
		\begin{equation}\label{eq2.23}
			\begin{aligned}
				\sum_{j=0}^{\infty} W_{\hat{x}_{L}^{j}, \lambda} & \leq \frac{C \lambda^{\frac{N-2}{2}}}{\left(1+\lambda\left|\xi^1\right|+\lambda\left|\xi^2-\hat{z}\right|\right)^{N-2}}+C \sum_{j=1}^{\infty} \frac{1}{\lambda^{\frac{N-2}{2}}\left|\xi^2-\hat{z}^{j}_L\right|^{N-2 }} \\
				& \leq \frac{C \lambda^{\frac{N-2}{2}}}{\left(1+\lambda\left|\xi^1\right|+\lambda\left|\xi^2-\hat{z}\right|\right)^{N-2}}+\frac{C}{\lambda^{\frac{N-2}{2}} L^{N-2}} \\
				& \leq \frac{C \lambda^{\frac{N-2}{2}}}{\left(1+\lambda\left|\xi^1\right|+\lambda\left|\xi^2-\hat{z}\right|\right)^{N-2}}
			\end{aligned}
		\end{equation}
		and
		\begin{equation}\label{eq2.24}
			\begin{aligned}
				\sum_{j=0}^{\infty}\frac{\lambda^{\frac{N-2}{2}}}{\left(1+\lambda\left|\xi^1\right|+\lambda\left|\xi^2-\hat{z}_L^j\right|\right)^{\frac{N-2}{2}+\tau}} & \leq C\lambda^{\frac{N-2}{2}}\Bigl(\frac{1}{\left(1+\lambda\left|\xi^1\right|+\lambda\left|\xi^2-\hat{z}\right|\right)^{\frac{N-2}{2}+\tau}}+\frac{1}{(\lambda L)^{\frac{N-2}{2}+\tau}}\Bigr) \\
				& \leq \frac{C \lambda^{\frac{N-2}{2}}}{(1+\lambda\left|\xi^1\right|+\lambda\left|\xi^2-\hat{z}\right|)^{\frac{N-2}{2}+\tau}}.
			\end{aligned}
		\end{equation} 	
		Combining \eqref{eq2.23}, \eqref{eq2.24} with Lemma \ref{lmA.2}, we obtain
		\begin{equation}\label{eq2.25}
			\begin{aligned}
				& \quad \int_{B_1(\hat{x})} \frac{\Bigl(\sum_{j=0}^{\infty} W_{\hat{x}_{L}^{j}, \lambda}\Bigr)^{2^{\sharp}-2}\left|\varphi_{\hat{x},\lambda}\right|}{|\xi^1|} \sum_{j=0}^{\infty}\frac{\lambda^{\frac{N-2}{2}}}{\left(1+\lambda\left|\xi^1\right|+\lambda\left|\xi^2-\hat{z}_L^j\right|\right)^{\frac{N-2}{2}+\tau}} \\
				& \leq \frac{C}{\lambda^{\frac{N-2}{2}} L^{N-2}} \int_{B_1(\hat{x})}\frac{\Bigl(\frac{\lambda^{\frac{N-2}{2}}}{\left(1+\lambda\left|\xi^1\right|+\lambda\left|\xi^2-\hat{z}\right|\right)^{N-2}}\Bigr)^{2^{\sharp}-2}}{\left|\xi^1\right|}\frac{ \lambda^{\frac{N-2}{2}}}{(1+\lambda\left|\xi^1\right|+\lambda\left|\xi^2-\hat{z}\right|)^{\frac{N-2}{2}+\tau}} \\
				& \leq \frac{C}{\lambda^{N-2} L^{N-2}}.
			\end{aligned}
		\end{equation} 	
		On the other hand, for $\xi \in \Omega \backslash B_1(\hat{x})$, it holds that
		\begin{align}\label{eqs2.26} 
			& \quad \int_{\Omega \backslash B_1(\hat{x})} \frac{\left(\sum_{j=0}^{\infty} W_{\hat{x}_{L}^{j}, \lambda}\right)^{2^{\sharp}-2}\left|\varphi_{\hat{x},\lambda}\right|}{|\xi^1|} \sum_{j=0}^{\infty}\frac{\lambda^{\frac{N-2}{2}}}{\left(1+\lambda\left|\xi^1\right|+\lambda\left|\xi^2-\hat{z}_L^j\right|\right)^{\frac{N-2}{2}+\tau}} \nonumber\\
			& \leq \int_{\Omega \backslash B_1(\hat{x})} \frac{\left(\sum_{j=0}^{\infty} W_{\hat{x}_{L}^{j}, \lambda}\right)^{2^{\sharp}-1}}{|\xi^1|} \sum_{j=0}^{\infty}\frac{\lambda^{\frac{N-2}{2}}}{\left(1+\lambda\left|\xi^1\right|+\lambda\left|\xi^2-\hat{z}_L^j\right|\right)^{\frac{N-2}{2}+\tau}}. 
		\end{align} 
		We have
		\begin{equation*}\label{eqs2.27} 
			\begin{aligned}
				& \quad \int_{\Omega \backslash B_1(\hat{x})} \frac{\lambda^{\frac{N-2}{2}}}{\left(1+\lambda |\xi^1|+\lambda|\xi^2-\hat{z}|\right)^{\frac{N-2}{2}+\tau}} \frac{W_{\hat{x},\lambda}^{2^{\sharp}-1}}{|\xi^1|} \\
				& \leq \int_{\mathbb{R}^{N} \backslash B_{\lambda}(0)} \frac{1}{|\xi^1|(1+|\xi^1|+|\xi^2|)^{N+\frac{N-2}{2}+\tau}} \leq \frac{C}{\lambda^{N-1-\theta}}
			\end{aligned}
		\end{equation*}
		and
		\begin{equation*}\label{eqs2.28}
			\begin{aligned}
				& \quad \int_{\Omega \backslash B_1(\hat{x})} \sum_{j=1}^{\infty}\frac{\lambda^{\frac{N-2}{2}}}{\left(1+\lambda\left|\xi^1\right|+\lambda\left|\xi^2-\hat{z}_L^j\right|\right)^{\frac{N-2}{2}+\tau}} \frac{W_{\hat{x},\lambda}^{2^{\sharp}-1}}{|\xi^1|} \\
				& \leq \frac{C \lambda^{N-1}}{(\lambda L)^{\frac{N-2}{2}+\tau}} \int_{\Omega \backslash B_1(\hat{x})} \frac{1}{|\xi^1|(1+\lambda |\xi^1|+\lambda|\xi^2-\hat{z}|)^{N}} \\
				& \leq \frac{C}{(\lambda L)^{\frac{N-2}{2}+\tau}} \int_{\mathbb{R}^{N} \backslash B_{\lambda}(0)} \frac{1}{|\xi^1|(1+|\xi^1|+|\xi^2-\hat{z}|)^{N}} \leq \frac{C}{\lambda (\lambda L)^{N-2-\theta}},
			\end{aligned}
		\end{equation*}
		since $\tau=\frac{N-2}{2}-\theta$.
		
		From 
		\begin{equation*}
			\begin{aligned}
				\sum_{j=1}^{\infty} W_{\hat{x}_{L}^{j}, \lambda} & \leq \frac{C \lambda^{\frac{N-2}{2}}}{(1+\lambda \left|\xi^1\right|+\lambda \left|\xi^2-\hat{z}\right|)^{(2+2 \theta) /\left(2^{\sharp}-1\right)}} \sum_{j=1}^{\infty} \frac{1}{\left(1+\lambda\left|\xi^1\right|+\lambda\left|\xi^2-\hat{z}_L^j\right|\right)^{N-2-(2+2 \theta) /\left(2^{\sharp}-1\right)}} \\
				& \leq \frac{C \lambda^{\frac{N-2}{2}}}{(1+\lambda \left|\xi^1\right|+\lambda \left|\xi^2-\hat{z}\right|)^{(2+2 \theta) /\left(2^{\sharp}-1\right)}} \frac{1}{(\lambda L)^{N-2-(2+2 \theta) /\left(2^{\sharp}-1\right)}},
			\end{aligned}
		\end{equation*}
		we obtain
		\begin{equation}\label{eqs2.29}
			\begin{aligned}
				& \quad \int_{\Omega \backslash B_1(\hat{x})} \frac{\lambda^{\frac{N-2}{2}}}{\left(1+\lambda |\xi^1|+\lambda|\xi^2-\hat{z}|\right)^{\frac{N-2}{2}+\tau}} \frac{\left(\sum_{j=1}^{\infty} W_{\hat{x}_{L}^{j}, \lambda}\right)^{2^{\sharp}-1}}{|\xi^1|} \\
				& \leq \int_{\mathbb{R}^{N} \backslash B_{\lambda}(0)} \frac{1}{|\xi^1|(1+|\xi^1|+|\xi^2|)^{2+2 \theta+\frac{N-2}{2}+\tau}} \frac{1}{(\lambda L)^{\left(2^{\sharp}-1\right)(N-2)-(2+2 \theta)}} \leq \frac{C}{\lambda^{1+\theta} (\lambda L)^{N-2-2 \theta}}.
			\end{aligned}
		\end{equation}
		Similarly, we can prove
		\begin{align}\label{eqs2.30} 
			& \quad \int_{\Omega \backslash B_1(\hat{x})} \sum_{j=1}^{\infty}\frac{\lambda^{\frac{N-2}{2}}}{\left(1+\lambda\left|\xi^1\right|+\lambda\left|\xi^2-\hat{z}_L^j\right|\right)^{\frac{N-2}{2}+\tau}} \frac{\left(\sum_{j=1}^{\infty} W_{\hat{x}_{L}^{j}, \lambda}\right)^{2^{\sharp}-1}}{|\xi^1|} \nonumber\\
			& \leq C \lambda^{N-1} \int_{\Omega \backslash B_1(\hat{x})} \Bigl[\frac{1}{|\xi^1| (1+\lambda |\xi^1|+\lambda|\xi^2-\hat{z}|)^{\frac{N-2}{2}}} \frac{1}{(\lambda L)^{\tau}} \\
			& \quad \quad \quad \quad \quad \quad \quad \quad \Bigl(\frac{1}{(1+\lambda |\xi^1|+\lambda|\xi^2-\hat{z}|)^{\frac{N-2}{2}+\frac{\theta}{2^{\sharp}-1}}} \frac{1}{(\lambda L)^{\frac{N-2}{2}-\frac{\theta}{2^{\sharp}-1}}}\Bigr)^{2^{\sharp}-1}\Bigr] \nonumber\\
			& \leq C \int_{\mathbb{R}^{N} \backslash B_{\lambda}(0)} \frac{1}{|\xi^1| (1+|\xi^1|+|\xi^2|)^{N-1+\theta}} \frac{1}{(\lambda L)^{\tau+\frac{N}{2}-\theta}} \leq \frac{C}{\lambda^{\theta} (\lambda L)^{N-1-2 \theta}}. \nonumber
		\end{align}
		Combining \eqref{eqs2.26}-\eqref{eqs2.30}, we find that
		\begin{equation*}\label{eqs2.31}
			\int_{\Omega \backslash B_1(\hat{x})} \frac{\left(\sum_{j=0}^{\infty} W_{\hat{x}_{L}^{j}, \lambda}\right)^{2^{\sharp}-2}\left|\varphi_{\hat{x},\lambda}\right|}{|\xi^1|} \sum_{j=0}^{\infty}\frac{\lambda^{\frac{N-2}{2}}}{\left(1+\lambda\left|\xi^1\right|+\lambda\left|\xi^2-\hat{z}_L^j\right|\right)^{\frac{N-2}{2}+\tau}} \leq \frac{C}{\lambda^{N-1-\theta}},
		\end{equation*}
		which together with \eqref{eq2.25} gives
		\begin{equation}\label{eqs2.32}
			|J_1|\leq \frac{C\lambda^{\alpha(i)}\|\omega\|_{*}}{\lambda^{\beta}}.
		\end{equation}
		To estimate $J_2$, using \eqref{eq2.23} and \eqref{eq2.24}, similar to the proof of \eqref{eqs2.26}, we find that
		\begin{equation}\label{eqs2.33}
			\begin{aligned}
				|J_2|&=\Bigl|\int_{\Omega} \frac{(M(\xi)-1)W_{\hat{x},\lambda}^{2^{\sharp}-2} \omega\partial_{i}\left(P W_{\hat{x},\lambda}\right)}{|\xi^1|}\Bigr|\\
				& \leq C\lambda^{\alpha(i)}\|\omega\|_{*} \int_{\Omega} \frac{|M(\xi)-1| W_{\hat{x},\lambda}^{2^{\sharp}-2} }{|\xi^1|} \sum_{j=0}^{\infty} \frac{\lambda^{\frac{N-2}{2}}}{\left(1+\lambda|\xi^1|+\lambda\left|\xi^2-\hat{z}_L^{j}\right|\right)^{\frac{N-2}{2}+\tau}} \sum_{j=0}^{\infty} W_{\hat{x}_L^{j}, \lambda} \\
				& \leq C\lambda^{\alpha(i)}\|\omega\|_{*} \Bigl(\lambda^{N-1}\int_{B_1(\hat{x})}|\xi|^{\beta} \frac{1}{|\xi^1| (1+\lambda|\xi^1|+\lambda|\xi^2-\hat{z}|)^{2+\frac{N-2}{2}+\tau+N-2}} \\
				&\quad + \int_{\Omega \backslash B_1(\hat{x})} \frac{ W_{\hat{x},\lambda}^{2^{\sharp}-2} }{|\xi^1|} \sum_{j=0}^{\infty} \frac{\lambda^{\frac{N-2}{2}}}{\left(1+\lambda|\xi^1|+\lambda\left|\xi^2-\hat{z}_L^{j}\right|\right)^{\frac{N-2}{2}+\tau}} \sum_{j=0}^{\infty} W_{\hat{x}_L^{j}, \lambda}\Bigr) \\
				& \leq C\lambda^{\alpha(i)}\|\omega\|_{*} \Bigl(\frac{\left(1+|\lambda \hat{x}|^{\beta}\right)}{\lambda^{\beta}}+\frac{1}{\lambda^{N-1-\theta}}\Bigr) \leq \frac{C\lambda^{\alpha(i)}\|\omega\|_{*}}{\lambda^{\beta}}, 
			\end{aligned}
		\end{equation} 
		where in the last inequality, we use $|\hat{x}|=o\left(\frac{1}{\lambda}\right)$ in the assumption \eqref{aeq2.4}.
		
		To estimate $J_3$, noting that $\omega \in \mathbf{E}$, it follows by Lemma \ref{lmA.1} that
		\begin{equation}\label{eqs2.34}
			\begin{aligned}
				\left|J_3\right| & =\Bigl|\int_{\Omega} \frac{W_{\hat{x},\lambda}^{2^{\sharp}-2} \omega (\partial_{i}\left(P W_{\hat{x},\lambda}\right)-\partial_{i} W_{\hat{x},\lambda})}{|\xi^1|}\Bigr| \leq C \lambda^{\alpha(i)} \int_{\Omega} \frac{W_{\hat{x},\lambda}^{2^{\sharp}-2}|\omega| \sum_{j=1}^{\infty} W_{\hat{x}_L^{j}, \lambda}}{|\xi^1|} \\
				& \leq C \lambda^{\alpha(i)}\|\omega\|_{*} \int_{\Omega} \frac{W_{\hat{x},\lambda}^{2^{\sharp}-2}}{|\xi^1|} \sum_{j=0}^{\infty} \frac{\lambda^{\frac{N-2}{2}}}{\left(1+\lambda|\xi^1|+\lambda\left|\xi^2-\hat{z}_L^{j}\right|\right)^{\frac{N-2}{2}+\tau}} \sum_{j=1}^{\infty} W_{\hat{x}_L^{j}, \lambda}.
			\end{aligned}
		\end{equation}
		Using \eqref{eq2.23} and \eqref{eq2.24}, we obtain
		\begin{equation}\label{eqs2.35}
			\begin{aligned}
				& \quad \int_{B_1(\hat{x})} \frac{W_{\hat{x},\lambda}^{2^{\sharp}-2}}{|\xi^1|} \sum_{j=0}^{\infty} \frac{\lambda^{\frac{N-2}{2}}}{\left(1+\lambda|\xi^1|+\lambda\left|\xi^2-\hat{z}_L^{j}\right|\right)^{\frac{N-2}{2}+\tau}} \sum_{j=1}^{\infty} W_{\hat{x}_L^{j}, \lambda} \\
				& \leq C \lambda^{N-1} \int_{B_1(\hat{x})} \frac{1}{|\xi^1|(1+\lambda|\xi^1|+\lambda\left|\xi^2-\hat{z}_L^{j}\right|)^{\frac{N-2}{2}+\tau+2}} \frac{1}{(\lambda L)^{N-2}} \leq \frac{C}{(\lambda L)^{N-2}}.
			\end{aligned}
		\end{equation}
		Similar to the proof of \eqref{eqs2.26}, we can prove
		\begin{equation}\label{eqs2.36}
			\int_{\Omega \backslash B_1(\hat{x})} \frac{W_{\hat{x},\lambda}^{2^{\sharp}-2}}{|\xi^1|} \sum_{j=0}^{\infty} \frac{\lambda^{\frac{N-2}{2}}}{\left(1+\lambda|\xi^1|+\lambda\left|\xi^2-\hat{z}_L^{j}\right|\right)^{\frac{N-2}{2}+\tau}} \sum_{j=1}^{\infty} W_{\hat{x}_L^{j}, \lambda} \leq \frac{C}{\lambda^{N-1-\theta}}. 
		\end{equation}
		Thus by \eqref{eqs2.34}-\eqref{eqs2.36}, we have proved that
		\begin{equation}\label{eqs2.37}
			\left|J_3\right| \leq \frac{C \lambda^{\alpha(i)}\|\omega\|_{*}}{\lambda ^{\beta}}. 
		\end{equation}
		Therefore, \eqref{eq2.18} is derived from equations \eqref{eqs2.32}, \eqref{eqs2.33} and \eqref{eqs2.37}.
		
		Now we prove \eqref{eq2.19}. In fact, using Lemma \ref{lmA.2}, we have
		\begin{equation}\label{eqs2.38}
			\begin{aligned}
				\int_{\Omega} \frac{W_{\hat{x},\lambda}^{2^{\sharp}-2} Z_{i}\partial_{h}\left(P W_{\hat{x},\lambda}\right)}{|\xi^1|} & =\int_{\Omega} \frac{W_{\hat{x},\lambda}^{2^{\sharp}-2} Z_{i} Z_{h}}{|\xi^1|} +O\Bigl(\int_{\Omega} \frac{W_{\hat{x},\lambda}^{2^{\sharp}-2} |Z_{i}| |\partial_{h} \varphi_{\hat{x}, \lambda}|}{|\xi^1|}\Bigr) \\
				& =\int_{\mathbb{R}^{N}} \frac{W_{\hat{x},\lambda}^{2^{\sharp}-2} Z_{i}^2\left(\delta_{ih}+o(1)\right)}{|\xi^1|} \\ 
				& =\int_{\mathbb{R}^{N}} \frac{W_{\hat{x},\lambda}^{2^{\sharp}} \left(\delta_{ih}+o(1)\right)}{|\xi^1|}+O\Bigl(\int_{\mathbb{R}^{N}} \frac{W_{\hat{x},\lambda}^{2^{\sharp-2}}\left|Z_i^2-W_{\hat{x},\lambda}^2\right|}{|\xi^1|}\Bigr) \\
				& =\lambda^{2 \alpha(i)} a_{i}\left(\delta_{ih}+o(1)\right),
			\end{aligned}
		\end{equation}
		where $a_{i}>0$ is a constant, $\delta_{ih}=1$ if $i=h$ and $\delta_{ih}=0$ if $i \not = h$. So, we can solve \eqref{eq2.17} and use \eqref{eq2.18} to obtain \eqref{eq2.19}.
	\end{proof}
	
	To estimate $(-\Delta)^{-1}\Bigl[\frac{M(x) (P W_{\hat{x},\lambda})^{2^{\sharp}-2} \omega}{|y|}\Bigr]$, we need the following lemma. 
	\begin{lemma}\label{lm2.4}
		If $N \geq 5$, $1 \leq \bar{k}<\frac{N-2}{2}$, then for any $\phi \in \mathbf{X}$, there exists a constant $C>0$, such that for any $x \in \Omega$, it holds that
		\begin{equation}\label{eqs2.39}
			\begin{aligned}
				&\quad \Bigl|(-\Delta)^{-1}\Bigl[\frac{M(x) (P W_{\hat{x}, \lambda})^{2^{\sharp}-2} \phi}{|y|}\Bigr]\Bigr|\\
				& \leq C\|\phi\|_{*} \Bigl[\sigma(x) \sum_{j=0}^{\infty} \frac{\lambda^{\frac{N-2}{2}}}{\left(1+\lambda|y|+\lambda\left|z-\hat{z}_L^{j}\right|\right)^{\frac{N-2}{2}+\tau+\theta}}\\
				& \quad +\lambda^{-\frac{2 \theta}{N-2}} \sigma(x) \sum_{j=0}^{\infty} \frac{\lambda^{\frac{N-2}{2}}}{\left(1+\lambda|y|+\lambda\left|z-\hat{z}_L^{j}\right|\right)^{\frac{N-2}{2}+\tau}}\Bigr].
			\end{aligned}
		\end{equation}
	\end{lemma}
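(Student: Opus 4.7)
The plan is to reduce the estimate to a single convolution against the Newtonian kernel $\Gamma$ over $\mathbb{R}^N$ and then invoke Lemma B.2 of \cite{WWY}, in the same spirit as the proof of Lemma \ref{lm2.1}.

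First, the definition of $\|\phi\|_*$ together with the $L^\infty$-bound on $M$ from $(A_2)$ dominates the integrand by
$$C\|\phi\|_*\,\frac{(PW_{\hat x,\lambda})^{2^\sharp-2}}{|\xi^1|}\,\sigma(\xi)\sum_{j=0}^\infty\frac{\lambda^{(N-2)/2}}{(1+\lambda|\xi^1|+\lambda|\xi^2-\hat z_L^j|)^{(N-2)/2+\tau}}.$$
Since $2^\sharp-2=2/(N-2)\le 1$ for $N\ge 5$, the subadditive inequality $(a+b)^p\le a^p+b^p$ for $p\in(0,1]$ together with the pointwise bound $PW_{\hat x,\lambda}\le C\sum_m W_{\hat x_L^m,\lambda}$ supplied by Lemma \ref{lmA.1} gives $(PW_{\hat x,\lambda})^{2^\sharp-2}\le C\sum_{m=0}^\infty W_{\hat x_L^m,\lambda}^{2^\sharp-2}$, turning the majorant into a periodic doubly indexed sum. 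The Green function unfolding $G(x,\xi)=\sum_j\Gamma(x,\xi+P_L^j)$ then, exactly as in the proof of Lemma \ref{lm2.1}, converts the integral over $\Omega$ into an integral over $\mathbb{R}^N$ against $\Gamma$.

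Next, I split the resulting double sum $\sum_{m,j}$ into the diagonal $m=j$ and the off-diagonal $m\ne j$. For the diagonal part, combining $W_{\hat x_L^j,\lambda}^{2^\sharp-2}(\xi)\sim\lambda(1+\lambda|\xi^1|+\lambda|\xi^2-\hat z_L^j|)^{-2}$ with the matched weight produces a source of the form $\sigma(\xi)|\xi^1|^{-1}\lambda^{N/2}(1+\lambda|\xi^1|+\lambda|\xi^2-\hat z_L^j|)^{-(N-2)/2-\tau-2}$ at each lattice point, whose convolution with $\Gamma$ is controlled by Lemma B.2 of \cite{WWY}; the two extra units of polynomial decay translate through that lemma into the $+\theta$ improvement in the decay exponent, yielding the first term on the right-hand side of \eqref{eqs2.39}. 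For the off-diagonal part $m\ne j$, I use $|\hat z_L^m-\hat z_L^j|\ge L|\tilde P^m-\tilde P^j|$ and interpolate the two polynomial denominators as in the proof of Lemma \ref{lm2.3} (compare the splitting producing $(\lambda L)^{-\tau-N/2+\theta}$ in \eqref{eqs2.30}): a small fraction of the exponent is moved from the $m$-factor to the $j$-factor so as to extract an overall prefactor $(\lambda L)^{-2\theta/(N-2)}$, which is comparable to $\lambda^{-2\theta/(N-2)}$ by the balance relation \eqref{aeq2.4}. The residual lattice sum converges because $\tau>\bar k$ (a consequence of $\bar k<(N-2)/2$ and the choice of $\theta$), and a final application of Lemma B.2 of \cite{WWY} produces the second term of \eqref{eqs2.39}.

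The main obstacle is the off-diagonal bookkeeping: the weights coming from $\|\phi\|_*$ and from $(PW_{\hat x,\lambda})^{2^\sharp-2}$ must be interpolated so as to extract the $\lambda^{-2\theta/(N-2)}$ gain \emph{without} destroying the polynomial decay needed for Lemma B.2 to apply, while simultaneously preserving the outer weight $\sigma(x)$ after convolution. The condition $1\le\bar k<(N-2)/2$ is used twice: it guarantees $\tau>\bar k$ so that all lattice sums converge, and through the constraint $h=N-k$ it is precisely what allows Lemma B.2 of \cite{WWY} to be applicable to the sources produced above.
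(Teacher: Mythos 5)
Your overall skeleton (dominate the integrand by the $\|\cdot\|_{*}$-weight, unfold the Green's function, convolve against $\Gamma$ via Lemma B.2 of \cite{WWY}) matches the paper, but your central decomposition contains a fatal step. You replace $(PW_{\hat x,\lambda})^{2^{\sharp}-2}$ by $\sum_{m}W_{\hat x_L^m,\lambda}^{2^{\sharp}-2}$ via subadditivity of $t\mapsto t^{2/(N-2)}$. The inequality is true, but the majorant is identically $+\infty$ whenever $\bar k\ge 2$: since $W_{\hat x_L^m,\lambda}^{2^{\sharp}-2}\sim\lambda\,(1+\lambda|\xi^1|+\lambda|\xi^2-\hat z_L^m|)^{-2}$ and the centers form a $\bar k$-dimensional lattice of spacing $L$, the sum behaves like $\lambda(\lambda L)^{-2}\sum_{m\in\mathbb{Z}^{\bar k}\setminus\{0\}}|m|^{-2}$, which diverges for $\bar k\ge 2$ (a case permitted by $\bar k<\frac{N-2}{2}$ as soon as $N\ge 7$). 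So the doubly indexed sum on which your diagonal/off-diagonal split rests does not exist. The paper never raises $\sum_m W_{\hat x_L^m,\lambda}$ to the power $2^{\sharp}-2$ termwise; it splits the $\xi$-integral into $B_1(\hat x)$ and $\Omega\setminus B_1(\hat x)$. Near $\hat x$ both sums collapse to their $j=0$ terms (\eqref{eq2.23}, \eqref{eq2.24}), giving a single-bubble source with two extra powers of decay and hence the $+\theta$ term; away from $\hat x$ each bubble satisfies $W_{\hat x_L^j,\lambda}\le\lambda^{-\theta}\cdot\lambda^{\frac{N-2}{2}}(1+\lambda|\xi^1|+\lambda|\xi^2-\hat z_L^j|)^{-\frac{N-2}{2}-\tau}$, so the whole product is bounded by $\lambda^{-\frac{2\theta}{N-2}}\bigl(\sum_j\cdots\bigr)^{2^{\sharp}-1}$, and only this power $2^{\sharp}-1=\frac{N}{N-2}>1$ of a single convergent sum is redistributed by H\"older (\eqref{eqs2.47}), using $\tau>\bar k$, into the $\|\cdot\|_{**}$-weight.

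Even where your sum converges ($\bar k=1$), the off-diagonal bookkeeping does not close. The product $W_{\hat x_L^m,\lambda}^{2^{\sharp}-2}\cdot\lambda^{\frac{N-2}{2}}(1+\lambda|\xi^1|+\lambda|\xi^2-\hat z_L^j|)^{-\frac{N-2}{2}-\tau}$ carries total decay exponent $2+\frac{N-2}{2}+\tau$, while the source needed for the second term of \eqref{eqs2.39}, namely $\lambda^{\frac{N}{2}}|\xi^1|^{-1}(1+\lambda|\xi^1|+\lambda|\xi^2-\hat z_L^j|)^{-\frac{N}{2}-\tau}$, requires exponent $\frac{N-2}{2}+\tau+1$ at a single center; the surplus is exactly $1$. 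But summing the separation factors $|\tilde P^m-\tilde P^j|^{-\kappa}$ over the lattice requires $\kappa>\bar k\ge 1$, so you cannot simultaneously spend more than $1$ on the separation and keep enough decay for Lemma B.2; the interpolation you model on \eqref{eqs2.30} (where the exponents are far from critical) breaks down at this borderline. A further, smaller point: $\tau>\bar k$ indeed follows from $\bar k<\frac{N-2}{2}$, but the applicability of Lemma B.2 of \cite{WWY} rests on $1\le h\le k-1$ (see Remark \ref{re1.3}), not on the bound on $\bar k$.
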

	\begin{proof}
		For any $\phi \in \mathbf{X}$, it holds that
		\begin{equation}\label{eqs2.40}
			\begin{aligned}
				&\quad \Bigl|(-\Delta)^{-1}\Bigl[\frac{M(x)\left(PW_{\hat{x},\lambda}\right)^{2^{\sharp}-2}\phi}{|y|}\Bigr]\Bigr|
				=\Bigl|\int_{\Omega} G(x,\xi) \frac{M(\xi)\left(PW_{\hat{x},\lambda}\right)^{2^{\sharp}-2}\phi}{|\xi^1|}\mathrm{d}\xi\Bigr|\\
				&\leq C\|\phi\|_* \int_{\Omega}\frac{G(x,\xi)}{|\xi^1|}\Bigl(\sum_{j=0}^{\infty}W_{\hat{x}_L^j,\lambda}\Bigr)^{2^\sharp-2} \sigma(\xi) \sum_{j=0}^{\infty} \frac{\lambda^{\frac{N-2}{2}}}{\left(1+\lambda\left|\xi^1\right|+\lambda\left|\xi^2-\hat{z}_L^j\right|\right)^{\frac{N-2}{2}+\tau}}\mathrm{d}\xi.
			\end{aligned}
		\end{equation}
		Suppose that $\xi \in B_{1}(\hat{x}) \cap \Omega$. Then recalling \eqref{eq2.23} and \eqref{eq2.24}, we have
		$$\sum_{j=0}^{\infty} W_{\hat{x}_{L}^{j}, \lambda} \leq \frac{C \lambda^{\frac{N-2}{2}}}{\left(1+\lambda\left|\xi^1\right|+\lambda\left|\xi^2-\hat{z}\right|\right)^{N-2}}$$
		and 
		$$\sum_{j=0}^{\infty}\frac{\lambda^{\frac{N-2}{2}}}{\left(1+\lambda\left|\xi^1\right|+\lambda\left|\xi^2-\hat{z}_L^j\right|\right)^{\frac{N-2}{2}+\tau}} \leq \frac{C \lambda^{\frac{N-2}{2}}}{(1+\lambda\left|\xi^1\right|+\lambda\left|\xi^2-\hat{z}\right|)^{\frac{N-2}{2}+\tau}}.$$
		Thus, we extend the function $\sigma(\xi)$ periodically to $\mathbb{R}^N$. It holds that
		\begin{equation}\label{eqs2.41}
			\begin{aligned}
				&\quad \int_{B_{1}(\hat{x}) \cap \Omega}\frac{G(x,\xi)}{|\xi^1|}\Bigl(\sum_{j=0}^{\infty}W_{\hat{x}_L^j,\lambda}\Bigr)^{2^\sharp-2} \sigma(\xi) \sum_{j=0}^{\infty} \frac{\lambda^{\frac{N-2}{2}}}{\left(1+\lambda\left|\xi^1\right|+\lambda\left|\xi^2-\hat{z}_L^j\right|\right)^{\frac{N-2}{2}+\tau}}\mathrm{d}\xi\\
				&\leq C \int_{B_{1}(\hat{x}) \cap \Omega} \frac{G(x,\xi)}{|\xi^1|} \frac{\lambda^{\frac{N}{2}}}{(1+\lambda\left|\xi^1\right|+\lambda\left|\xi^2-\hat{z}\right|)^{\frac{N-2}{2}+2+\tau}} \Bigl(\frac{1+\lambda\left|\xi^1\right|+\lambda\left|\xi^2-\hat{z}\right|}{\lambda}\Bigr)^{\tau}\\
				&\leq C \int_{\Omega} \frac{G(x,\xi)}{|\xi^1|} \frac{\lambda^{\frac{N}{2}-{\tau}}}{(1+\lambda\left|\xi^1\right|+\lambda\left|\xi^2-\hat{z}\right|)^{\frac{N-2}{2}+2}}\\
				&= C \int_{\mathbb{R}^{N}} \Gamma(x,\xi) \sum_{j=0}^{\infty} \frac{\lambda^{\frac{N}{2}-{\tau}}}{|\xi^1| (1+\lambda\left|\xi^1\right|+\lambda\left|\xi^2-\hat{z}_L^j\right|)^{\frac{N-2}{2}+2}} \\
				&\leq C \sum_{j=0}^{\infty} \frac{\lambda^{\frac{N-2}{2}-\tau}}{(1+\lambda\left|y\right|+\lambda\left|z-\hat{z}_L^j\right|)^{\frac{N}{2}}}.
			\end{aligned}
		\end{equation}
		Notice that we have
		$$
		\sigma(x) \geq C \Bigl(\frac{1+\lambda\left|y\right|+\lambda\left|z-\hat{z}\right|}{\lambda}\Bigr)^{\tau}, \quad x \in B_{1}(\hat{x}) \cap \Omega.
		$$
		Thus, for $x \in B_{1}(\hat{x}) \cap \Omega$, on the one hand, when $j=0$, there holds
		\begin{equation}\label{eqs2.42}
			\begin{aligned}
				&\quad \frac{\lambda^{\frac{N-2}{2}-\tau}}{(1+\lambda\left|y\right|+\lambda\left|z-\hat{z}\right|)^{\frac{N}{2}}} \\
				& =\Bigl(\frac{1+\lambda\left|y\right|+\lambda\left|z-\hat{z}\right|}{\lambda}\Bigr)^{\tau} \frac{\lambda^{\frac{N-2}{2}}}{(1+\lambda\left|y\right|+\lambda\left|z-\hat{z}\right|)^{{\frac{N}{2}}+\tau}} \\
				&\leq \frac{C \sigma(x) \lambda^{\frac{N-2}{2}}}{(1+\lambda\left|y\right|+\lambda\left|z-\hat{z}\right|) ^{\frac{N-2}{2}+\tau+\theta}}.
			\end{aligned}
		\end{equation}
		On the other hand, for $j>0$, we have
		\begin{equation}\label{eqs2.43}
			\sum_{j=1}^{\infty} \frac{\lambda^{\frac{N-2}{2}-\tau}}{(1+\lambda\left|y\right|+\lambda\left|z-\hat{z}_L^j\right|)^{\frac{N}{2}}} \leq \frac{C \lambda^{\frac{N-2}{2}-\tau}}{(\lambda L)^{\frac{N}{2}}}
		\end{equation}
		and
		\begin{align}\label{eqs2.44} 
			&\quad \frac{\sigma(x) \lambda^{\frac{N-2}{2}}}{(1+\lambda\left|y\right|+\lambda\left|z-\hat{z}\right|)^{\frac{N-2}{2}+\tau+\theta}} \nonumber\\
			&\geq C \Bigl(\frac{1+\lambda\left|y\right|+\lambda\left|z-\hat{z}\right|}{\lambda}\Bigr)^{\tau} \frac{\lambda^{\frac{N-2}{2}}}{(1+\lambda\left|y\right|+\lambda\left|z-\hat{z}\right|)^{\frac{N-2}{2}+\tau+\theta}} \\
			&=C \frac{\lambda^{\frac{N-2}{2}-\tau}}{(1+\lambda\left|y\right|+\lambda\left|z-\hat{z}\right|)^{\frac{N-2}{2}+\theta}} \nonumber\\
			&\geq C \frac{\lambda^{\frac{N-2}{2}-\tau}}{\left(1+2\lambda\right)^{\frac{N-2}{2}+\theta}} \geq C \frac{\lambda^{\frac{N-2}{2}-{\tau}}}{(\lambda L) ^{\frac{N}{2}-1+\theta}}. \nonumber 
		\end{align}
		Combining \eqref{eqs2.43} and \eqref{eqs2.44}, we see that for $x \in B_{1}(\hat{x}) \cap \Omega$,
		\begin{equation}\label{eqs2.45}
			\sum_{j=1}^{\infty} \frac{\lambda^{\frac{N-2}{2}-\tau}}{(1+\lambda\left|y\right|+\lambda\left|z-\hat{z}_L^j\right|)^{\frac{N}{2}}} \leq \frac{C \sigma(x) \lambda^{\frac{N-2}{2}}}{(1+\lambda\left|y\right|+\lambda\left|z-\hat{z}\right|)^{\frac{N-2}{2}+\tau+\theta}}.
		\end{equation}
		Combining \eqref{eqs2.41}, \eqref{eqs2.42} and \eqref{eqs2.45}, we obtain
		\begin{equation}\label{eqs2.46}
			\begin{aligned}
				&\quad \int_{B_{1}(\hat{x}) \cap \Omega}\frac{G(x,\xi)}{|\xi^1|}\Bigl(\sum_{j=0}^{\infty}W_{\hat{x}_L^j,\lambda}\Bigr)^{2^\sharp-2} \sigma(\xi) \sum_{j=0}^{\infty} \frac{\lambda^{\frac{N-2}{2}}}{\left(1+\lambda\left|\xi^1\right|+\lambda\left|\xi^2-\hat{z}_L^j\right|\right)^{\frac{N-2}{2}+\tau}}\mathrm{d}\xi\\
				&\leq C \sigma(x) \sum_{j=0}^{\infty} \frac{\lambda^{\frac{N-2}{2}}}{\left(1+\lambda|y|+\lambda\left|z-\hat{z}_L^{j}\right|\right)^{\frac{N-2}{2}+\tau+\theta}}.
			\end{aligned}
		\end{equation}
		If $\xi \in \Omega \backslash B_{1}(\hat{x})$, then
		$$\sum_{j=0}^{\infty} W_{\hat{x}_L^{j}, \lambda}(\xi) \leq \frac{C }{\lambda^{\frac{N-2}{2}-\tau}} \sum_{j=0}^{\infty} \frac{\lambda^{\frac{N-2}{2}}}{\left(1+\lambda|\xi^1|+\lambda\left|\xi^2-\hat{z}_L^{j}\right|\right)^{\frac{N-2}{2}+\tau}}.$$
		Therefore, applying H\"older inequality, we find that
		\begin{equation}\label{eqs2.47}
			\begin{aligned}
				&\quad \Bigl(\sum_{j=0}^{\infty}W_{\hat{x}_L^j,\lambda}\Bigr)^{2^\sharp-2} \sum_{j=0}^{\infty} \frac{\lambda^{\frac{N-2}{2}}}{\left(1+\lambda\left|\xi^1\right|+\lambda\left|\xi^2-\hat{z}_L^j\right|\right)^{\frac{N-2}{2}+\tau}}\\
				&\leq \frac{C}{\lambda^{1-\frac{2\tau}{N-2}}} \Bigl(\sum_{j=0}^{\infty} \frac{\lambda^{\frac{N-2}{2}}}{\left(1+\lambda|\xi^1|+\lambda\left|\xi^2-\hat{z}_L^{j}\right|\right)^{\frac{N-2}{2}+\tau}}\Bigr)^{2^\sharp-1}\\
				& = C\lambda^{-\frac{2\theta}{N-2}} \Bigl(\sum_{j=0}^{\infty} \frac{\lambda^{\frac{N-2}{2}}}{\left(1+\lambda|\xi^1|+\lambda\left|\xi^2-\hat{z}_L^{j}\right|\right)^{\frac{N-2}{2}+\frac{N-2}{N}\tau+\frac{2}{N}\tau}}\Bigr)^{\frac{N}{N-2}}\\
				& \leq C\lambda^{-\frac{2\theta}{N-2}} \sum_{j=0}^{\infty} \frac{\lambda^{\frac{N}{2}}}{\left(1+\lambda|\xi^1|+\lambda\left|\xi^2-\hat{z}_L^{j}\right|\right)^{\frac{N}{2}+\tau}} \Bigl(\sum_{j=0}^{\infty} \frac{1}{\left(1+\lambda|\xi^1|+\lambda\left|\xi^2-\hat{z}_L^{j}\right|\right)^{\tau}}\Bigr)^{\frac{2}{N-2}}\\
				& \leq C\lambda^{-\frac{2\theta}{N-2}} \sum_{j=0}^{\infty} \frac{\lambda^{\frac{N}{2}}}{\left(1+\lambda|\xi^1|+\lambda\left|\xi^2-\hat{z}_L^{j}\right|\right)^{\frac{N}{2}+\tau}}, \quad \forall \,\, \xi \in \Omega \backslash B_{1}(\hat{x}),
			\end{aligned}
		\end{equation}
		where we use 
		$$\sum_{j=0}^{\infty} \frac{1}{\left(1+\lambda|\xi^1|+\lambda\left|\xi^2-\hat{z}_L^{j}\right|\right)^{\tau}} \leq C$$
		in the last inequality since $\tau>\bar{k}$. This gives that
		\begin{equation}\label{eqs2.48}
			\begin{aligned}
				&\quad \int_{\Omega \backslash B_{1}(\hat{x})}\frac{G(x,\xi)}{|\xi^1|}\Bigl(\sum_{j=0}^{\infty}W_{\hat{x}_L^j,\lambda}\Bigr)^{2^\sharp-2} \sigma(\xi) \sum_{j=0}^{\infty} \frac{\lambda^{\frac{N-2}{2}}}{\left(1+\lambda\left|\xi^1\right|+\lambda\left|\xi^2-\hat{z}_L^j\right|\right)^{\frac{N-2}{2}+\tau}}\mathrm{d}\xi\\
				&\leq C\lambda^{-\frac{2\theta}{N-2}} \int_{\Omega} \frac{G(x,\xi)}{|\xi^1|} \sigma(\xi) \sum_{j=0}^{\infty} \frac{\lambda^{\frac{N}{2}}}{\left(1+\lambda|\xi^1|+\lambda\left|\xi^2-\hat{z}_L^{j}\right|\right)^{\frac{N}{2}+\tau}} \mathrm{d}\xi\\
				& = C\lambda^{-\frac{2\theta}{N-2}} \int_{\mathbb{R}^{N}} \Gamma(x,\xi) \sigma(\xi) \sum_{j=0}^{\infty} \frac{\lambda^{\frac{N}{2}}}{|\xi^1|\left(1+\lambda|\xi^1|+\lambda\left|\xi^2-\hat{z}_L^{j}\right|\right)^{\frac{N}{2}+\tau}} \mathrm{d}\xi\\
				& \leq C\lambda^{-\frac{2\theta}{N-2}} \sigma(x) \sum_{j=0}^{\infty} \frac{\lambda^{\frac{N-2}{2}}}{\left(1+\lambda|y|+\lambda\left|z-\hat{z}_L^{j}\right|\right)^{\frac{N-2}{2}+\tau}}.
			\end{aligned}
		\end{equation}
		Thus, the result is derived from equations \eqref{eqs2.46} and \eqref{eqs2.48}.
	\end{proof}

	Recall that $T$ is defined in \eqref{eq2.15}. We are now prepared to prove the following result.
	\begin{lemma}\label{lm2.5}
		$T$ is a compact bounded linear operator from $\mathbf{E}$ to itself.
	\end{lemma}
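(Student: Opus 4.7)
The plan is to split $T = T_1 + T_2$, where
\begin{equation*}
T_1\omega = (2^\sharp-1)(-\Delta)^{-1}\Bigl[\frac{M(x)(PW_{\hat{x},\lambda})^{2^\sharp-2}\omega}{|y|}\Bigr]
\end{equation*}
is the main part and $T_2\omega = (2^\sharp-1)\sum_{i=1}^{h+1} c_i(\omega)(-\Delta)^{-1}\bigl[W_{\hat{x},\lambda}^{2^\sharp-2}Z_i/|y|\bigr]$ collects the correction that $\mathbf{P}$ inserts to project onto $\mathbf{F}$. By construction of $\mathbf{P}$, $T\omega \in \mathbf{E}$ automatically (since $\mathbf{P}$ produces an element of $\mathbf{F}$ and $(-\Delta)^{-1}$ sends $\mathbf{F}$ into $\mathbf{E}$), so it suffices to show boundedness and compactness as an operator into $\mathbf{X}$. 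I will read off boundedness from Lemmas \ref{lm2.1}, \ref{lm2.3}, \ref{lm2.4}, and obtain compactness from Arzel\`a--Ascoli on compact pieces of $\Omega$ together with a uniform tail estimate furnished by the decay improvement in Lemma \ref{lm2.4}.

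For boundedness, Lemma \ref{lm2.4} gives $|T_1\omega(x)| \le C\|\omega\|_*[A(x) + \lambda^{-2\theta/(N-2)} W(x)]$, where $W(x) = \sigma(x)\sum_j \lambda^{(N-2)/2}/(1+\lambda|y|+\lambda|z-\hat{z}_L^j|)^{(N-2)/2+\tau}$ is the weight defining $\|\cdot\|_*$ and $A(x)$ is the same sum with exponent raised to $(N-2)/2+\tau+\theta$. Since $A(x) \le W(x)$ and $\lambda^{-2\theta/(N-2)}$ is bounded, one gets $\|T_1\omega\|_* \le C\|\omega\|_*$. For $T_2$, Lemma \ref{lm2.3} will yield $|c_i(\omega)| \le C\lambda^{-\alpha(i)-\beta}\|\omega\|_*$; combined with the pointwise bound $|Z_i| \le C\lambda^{\alpha(i)}W_{\hat{x},\lambda}$ (which produces $\|W_{\hat{x},\lambda}^{2^\sharp-2}Z_i/|y|\|_{**} \le C\lambda^{\alpha(i)}$ for $\lambda$ large) and Lemma \ref{lm2.1}, this gives $\|T_2\omega\|_* \le C\lambda^{-\beta}\|\omega\|_*$.

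For compactness, take a bounded sequence $\{\omega_n\} \subset \mathbf{E}$. Since $T_2$ has finite-dimensional range (of dimension at most $h+1$), extracting a subsequence along which $c_i(\omega_n) \to c_i^*$ already makes $T_2\omega_n$ converge in $\|\cdot\|_*$, so it suffices to treat $T_1$. The equation $-\Delta(T_1\omega_n) = (2^\sharp-1)M(x)(PW_{\hat{x},\lambda})^{2^\sharp-2}\omega_n/|y|$ has locally $L^\infty$-bounded right-hand side, so standard interior Schauder / $W^{2,p}$ estimates provide uniform local $C^{1,\alpha}$ bounds on $T_1\omega_n$; Arzel\`a--Ascoli combined with a diagonal extraction along an exhaustion of $\Omega$ then yields a subsequence $T_1\omega_{n_k} \to v$ locally uniformly. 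To upgrade to convergence in $\|\cdot\|_*$, I fix $\varepsilon > 0$ and split $\Omega = K_R \cup (\Omega \setminus K_R)$ with $K_R = \{x \in \Omega : |y|+|z^2| \le R\}$. Because each $\hat{z}_L^j$ has vanishing $z^2$-component, $|z-\hat{z}_L^j| \ge |z^2|$, so on $\Omega \setminus K_R$ the ratio $A(x)/W(x)$ is bounded by $C(1+\lambda R)^{-\theta}$; Lemma \ref{lm2.4} then bounds $|T_1\omega_n(x)|/W(x)$ by $C[(1+\lambda R)^{-\theta} + \lambda^{-2\theta/(N-2)}]$, both of which are $<\varepsilon$ for $R$ large and for $\lambda$ large (which is forced by the setup \eqref{aeq2.4}). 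The same bound holds for $v$, so the tail contribution to $\|T_1\omega_{n_k}-v\|_*$ is $\le C\varepsilon$ for all $k$. On the bounded set $K_R$, locally uniform convergence together with a positive lower bound for $W$ forces $\sup_{K_R}|T_1\omega_{n_k}-v|/W \to 0$, and combining the two pieces yields $\|T_1\omega_{n_k}-v\|_* \to 0$.

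The hardest part will be precisely this uniform tail estimate. Since the $\|\cdot\|_*$-weight is itself lattice-periodic along the $\bar{k}$ directions, "infinity" here only means the non-periodic directions $|y|+|z^2| \to \infty$, and Lemma \ref{lm2.4}'s twin gain---the extra $(\cdots)^{-\theta}$ factor on the $B_1(\hat{x})$-contribution together with the overall factor $\lambda^{-2\theta/(N-2)}$ on the $\Omega \setminus B_1(\hat{x})$-contribution---is exactly what is needed to compactify the operator. This is also the reason for inserting $\sigma(x)$ in $\|\cdot\|_*$ and for imposing $\tau > \bar{k}$.
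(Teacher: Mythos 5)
Your overall strategy coincides with the paper's: boundedness from Lemmas \ref{lm2.1}, \ref{lm2.3} and \ref{lm2.4}, and compactness from local elliptic estimates plus Arzel\`a--Ascoli combined with a uniform tail estimate in the $\|\cdot\|_*$-weight. However, your justification of the tail estimate --- which you correctly identify as the crux --- has a genuine gap. The second term in the conclusion of Lemma \ref{lm2.4} is $\lambda^{-\frac{2\theta}{N-2}}\,W(x)$ with $W$ the \emph{same} weight that defines $\|\cdot\|_*$; after dividing by $W(x)$ it contributes the constant $\lambda^{-\frac{2\theta}{N-2}}$, which is independent of $R$. Since compactness must be proved for a \emph{fixed} $(\hat{x},\lambda)$ and $\varepsilon>0$ is arbitrary, you cannot make $C\bigl[(1+\lambda R)^{-\theta}+\lambda^{-\frac{2\theta}{N-2}}\bigr]<\varepsilon$ by enlarging $R$; "taking $\lambda$ large" only shrinks a fixed constant, and an operator that is merely within $C\lambda^{-\frac{2\theta}{N-2}}$ of a compact operator need not be compact. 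What your argument actually yields is $\limsup_k\|T_1\omega_{n_k}-v\|_*\le C\lambda^{-\frac{2\theta}{N-2}}\sup_n\|\omega_n\|_*$, i.e.\ a bound on the measure of noncompactness of $T_1$ --- enough, with $\lambda$ large, to run the Fredholm alternative in Proposition \ref{pro2.2}, but not the statement of the lemma. To prove the tail estimate itself one must re-enter the \emph{proof} of Lemma \ref{lm2.4} (this is what the paper means by "from the proofs of \eqref{eqs2.39} and \eqref{eqs2.50}"): the source $\frac{M(\xi)(PW_{\hat{x},\lambda})^{2^{\sharp}-2}\omega_n}{|\xi^1|}$ is bounded by the $(2^{\sharp}-1)$-th power of the $\|\cdot\|_*$-weight (see \eqref{eqs2.47}), hence is $o(1)$ \emph{relative to} the $\|\cdot\|_{**}$-weight as $|\xi^1|+|\xi^{2,2}|\to\infty$. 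Splitting the Green representation into $\xi\in K_{R_0}$ and $\xi\notin K_{R_0}$, the far region contributes $o(1)\cdot W(x)$ uniformly, while the near region contributes $O(|x|^{-(N-2)})$, which is $o(W(x))$ as $x\to\infty$ because $\frac{N-2}{2}+\tau=N-2-\theta<N-2$. Only this two-region splitting in $\xi$ closes the argument.

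A secondary inaccuracy: the right-hand side $\frac{M(x)(PW_{\hat{x},\lambda})^{2^{\sharp}-2}\omega_n}{|y|}$ is \emph{not} locally $L^\infty$ near $\{y=0\}$, so interior Schauder estimates do not apply directly. As in the paper, one only has the source in $L^s_{\mathrm{loc}}$ for $s<k$, and one needs $k\ge\frac{N+1}{2}$ (Remark \ref{re1.3}) so that $W^{2,s}$-estimates and Sobolev embedding give uniform local H\"older bounds; equicontinuity on compact sets is what Arzel\`a--Ascoli requires, and it is obtained this way rather than via $C^{1,\alpha}$ Schauder theory.
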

	\begin{proof}
		By the definition of $T$, for any $\phi \in \mathbf{E}$, we have
		\begin{equation}\label{eqs2.49}
			\begin{aligned}
				&\quad \|T\phi\|_*=\Bigl\|\left(2^{\sharp}-1\right)(-\Delta)^{-1}\Bigl[\mathbf{P}\Bigl(\frac{M(x)\left(P W_{\hat{x},\lambda}\right)^{2^{\sharp}-2} \phi}{|y|}\Bigr)\Bigr]\Bigr\|_*\\
				& \leq (2^{\sharp}-1)\Bigl\|(-\Delta)^{-1}\Bigl[\frac{M(x)\left(P W_{\hat{x},\lambda}\right)^{2^{\sharp}-2}\phi}{|y|}\Bigr]\Bigr\|_*+(2^{\sharp}-1)\Bigl\|(-\Delta)^{-1}\Bigl[\sum_{i=1}^{h+1}c_{i}\frac{W_{\hat{x},\lambda}^{2^{\sharp}-2}Z_{i}}{|y|}\Bigr]\Bigr\|_*.
			\end{aligned} 		
		\end{equation}
		On the one hand, using Lemma \ref{lm2.4}, we have given the estimate of $\Bigl|(-\Delta)^{-1}\Bigl[\frac{M(x) (P W_{\hat{x},\lambda})^{2^{\sharp}-2} \phi}{|y|}\Bigr]\Bigr|$.
		On the other hand, using Lemma \ref{lm2.3}, we obtain
		\begin{equation}\label{eqs2.50}
			\begin{aligned}
				\Bigl|(-\Delta)^{-1}\Bigl(c_{i}\frac{W_{\hat{x},\lambda}^{2^{\sharp}-2}Z_{i}}{|y|}\Bigr)\Bigr| &=\Bigl|\int_{\Omega} G(x,\xi) c_{i}\frac{W_{\hat{x},\lambda}^{2^{\sharp}-2}Z_{i}}{|\xi^1|}\mathrm{d}\xi\Bigr|\\
				& \leq c_{i}\lambda^{\alpha(i)}\Bigl|\int_{\Omega} G(x,\xi) \frac{W_{\hat{x},\lambda}^{2^{\sharp}-1}}{|\xi^1|}\mathrm{d}\xi\Bigr|\\
				& = c_{i}\lambda^{\alpha(i)}PW_{\hat{x},\lambda} \leq c_{i}\lambda^{\alpha(i)} \sum_{j=0}^{\infty}W_{\hat{x}_L^j,\lambda} \\
				& \leq \frac{C}{\lambda^{\beta}} \|\phi\|_* \sum_{j=0}^{\infty}\frac{\lambda^{\frac{N-2}{2}}}{\left(1+\lambda |y|+\lambda |z-\hat{z}_L^j|\right)^{\frac{N-2}{2}+\tau}}.
			\end{aligned}
		\end{equation}
		By combining \eqref{eqs2.39}, \eqref{eqs2.49} and \eqref{eqs2.50}, we establish that $\|T \phi\|_{*} \leq C\|\phi\|_{*}$. This inequality demonstrates that $T$ is a bounded linear operator from $\mathbf{E}$ to itself.
		
		Now we prove that $T$ is a compact operator. Suppose that $\phi_n$ is a bounded sequence in $\mathbf{E}$. On the one hand, for any $\varepsilon>0$, from the proofs of \eqref{eqs2.39} and \eqref{eqs2.50}, we observe that there exists $R>0$ sufficiently large, such that
		\begin{equation}\label{eqs2.51}
			\Bigl(\sigma(x) \sum_{j=0}^{\infty}\frac{\lambda^{\frac{N-2}{2}}}{\left(1+\lambda |y|+\lambda |z-\hat{z}_L^j|\right)^{\frac{N-2}{2}+\tau}}\Bigr)^{-1}\left|T \phi_n\right|<\varepsilon, \ x \in \Omega \backslash B_{R}(0).
		\end{equation}
		On the other hand, from the equation
		$$
		-\Delta T \phi_n=(2^{\sharp}-1)\Bigl(\frac{M(x)\left(P W_{\hat{x},\lambda}\right)^{2^{\sharp}-2}\phi_n}{|y|}+\sum_{i=1}^{h+1}c_{i}\frac{W_{\hat{x},\lambda}^{2^{\sharp}-2}Z_{i}}{|y|}\Bigr),
		$$ 
		and the definition of $\|\cdot\|_*$, using Lemma \ref{lm2.3} and \ref{lmA.1}, we know that
		$$
		\frac{M(x)\left(P W_{\hat{x},\lambda}\right)^{2^{\sharp}-2}\phi_n}{|y|}, c_{i}\frac{W_{\hat{x},\lambda}^{2^{\sharp}-2}Z_{i}}{|y|} \in L_{\text{loc}}^{s}(\Omega), \ \text{for any $0<s<k$}.
		$$
		Then applying Remark \ref{re1.3}, $L^p$ estimate and the Sobolev embedding theorem, it follows that $T \phi_n$ is bounded in $C_{\text{loc}}^1(\Omega)$. Consequently, $T \phi_n$ has a subsequence, still denoted by $T \phi_n$, which converges to $u_{0}$ uniformly in $B_{R}(0)\cap \Omega$ for any $R>0$. Moreover, since $\left\|T \phi_n\right\|_{*} \leq C<+\infty$, we deduce that $\left\|u_{0}\right\|_{*} \leq C$, which implies $u_{0} \in \mathbf{E}$. By applying \eqref{eqs2.51} to $\phi_n$ and letting $n \rightarrow\infty$, we obtain 
		$$
		\Bigl(\sigma(x) \sum_{j=0}^{\infty}\frac{\lambda^{\frac{N-2}{2}}}{\left(1+\lambda |y|+\lambda |z-\hat{z}_L^j|\right)^{\frac{N-2}{2}+\tau}}\Bigr)^{-1}\left|u_{0}\right|<\varepsilon, \ x \in \Omega \backslash B_{R}(0),
		$$ 
		if $R>0$ is sufficiently large. This result, combined with \eqref{eqs2.51} and $T \phi_n \rightarrow u_{0}$ uniformly in $B_{R}(0) \cap \Omega$, implies that $\left\|T \phi_n-u_{0}\right\|_{*} \rightarrow 0$. Therefore, $T$ is a compact operator.
	\end{proof}
	
	Now we prove that $I-T$ is injective in $\mathbf{E}$. For this purpose, we consider the equation $\phi-T \phi= (-\Delta)^{-1} f$, where $\phi \in \mathbf{E}$ and $f \in \mathbf{F}$. Equivalently, we can reformulate this as the following linear problem:
	\begin{equation}\label{eqs2.52}
		-\Delta \phi-\left(2^{\sharp}-1\right) \frac{M(x)\left(P W_{\hat{x},\lambda}\right)^{2^{\sharp}-2} \phi}{|y|}=f+\sum_{i=1}^{h+1} c_{i} \frac{W_{\hat{x},\lambda}^{2^{\sharp}-2} Z_{i}}{|y|}, \ \phi \in \mathbf{E},
	\end{equation} 
	for some constants $c_{i}$, where $f$ is a function in $\mathbf{F}$.
	
	\begin{lemma}\label{lm2.6}
		If $\phi \in \mathbf{E}$ solves \eqref{eqs2.52}, then $\|\phi\|_{*} \leq C\|f\|_{**}$ for some constant $C>0$, independent of $(\hat{x},\lambda)$. In particular, $I-T$ is injective in $\mathbf{E}$.
	\end{lemma}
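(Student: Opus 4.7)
The plan is a contradiction/blow-up argument, which is standard for $\|\cdot\|_*$-estimates in reduction methods. Suppose the assertion fails: there exist sequences $L_n \to \infty$, parameters $(\hat{x}_n, \lambda_n)$ satisfying \eqref{aeq2.4}, functions $\phi_n \in \mathbf{E}$, $f_n \in \mathbf{F}$, and constants $c_{i,n}$ such that $\phi_n$ solves \eqref{eqs2.52} with these data, $\|\phi_n\|_* = 1$, but $\|f_n\|_{**} \to 0$. The task is to derive a contradiction.

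First, I would sharpen the estimate on the Lagrange multipliers $c_{i,n}$. Multiplying \eqref{eqs2.52} by $\partial_j(PW_{\hat{x}_n,\lambda_n})$ and integrating over $\Omega$, one uses the identity $-\Delta\, \partial_j(PW_{\hat{x},\lambda}) = (2^\sharp-1) W_{\hat{x},\lambda}^{2^\sharp-2} Z_j/|y|$ (obtained by differentiating the defining integral of $PW$) together with $\phi_n \in \mathbf{E}$ to see that the $-\Delta\phi_n$ term vanishes after integration by parts (boundary terms disappear by periodicity and by the decay built into the norms). Lemma \ref{lm2.3} then bounds the remaining $\phi_n$-term by $C\lambda_n^{\alpha(j)-\beta}\|\phi_n\|_*$, the matrix computation in \eqref{eqs2.38} shows the $c_i$-system is essentially diagonal with diagonal entries $\asymp \lambda_n^{2\alpha(i)}$, and a direct estimate (using $|\partial_j(PW)|\leq C\lambda^{\alpha(j)}\sum_k W_{\hat{x}_L^k,\lambda}$ and integrability) gives $\bigl|\int f_n\,\partial_j(PW)\bigr|\leq C\lambda_n^{\alpha(j)}\|f_n\|_{**}$. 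Solving yields
$$|c_{i,n}| \leq C\lambda_n^{-\alpha(i)-\beta}\|\phi_n\|_* + C\lambda_n^{-\alpha(i)}\|f_n\|_{**}.$$

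Second, I would run a blow-up at a near-maximizer. Let $x_n^\star$ (nearly) attain $\|\phi_n\|_* = 1$. Rewrite \eqref{eqs2.52} as $\phi_n = T\phi_n + (-\Delta)^{-1}f_n + \sum_i c_{i,n}(-\Delta)^{-1}(W^{2^\sharp-2}Z_i/|y|)$. Applying Lemmas \ref{lm2.1} and \ref{lm2.4} together with the bound on $c_{i,n}$, one sees that outside every $O(1/\lambda_n)$-neighbourhood of the lattice points $\hat{x}_{n,L}^{j}$ the ratio $|\phi_n(x)|$ divided by the weight in $\|\cdot\|_*$ is $o(1)$ — the small factor comes from the $\lambda_n^{-2\theta/(N-2)}$-prefactor and the extra $(1+\lambda_n|y|+\lambda_n|z-\hat{z}_{n,L}^j|)^{-\theta}$-decay in the fast term of Lemma \ref{lm2.4}. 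Thus $x_n^\star$ must lie within $O(1/\lambda_n)$ of some $\hat{x}_{n,L}^{j_n}$; after translating by a lattice vector we may take $j_n = 0$. Set $\tilde{\phi}_n(x) = \lambda_n^{-(N-2)/2}\phi_n(\hat{x}_n + x/\lambda_n)$. Then $\tilde{\phi}_n$ is locally uniformly bounded, and solves a rescaled linear equation whose inhomogeneity tends to zero in $L^p_{\text{loc}}$. Standard $L^p$/Schauder estimates give $\tilde{\phi}_n \to \phi_0$ in $C^1_{\text{loc}}(\mathbb{R}^N)$, with $\phi_0$ a bounded solution of $-\Delta \phi_0 = (2^\sharp-1)W_{0,1}^{2^\sharp-2}\phi_0/|y|$.

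Third, the non-degeneracy of the Hardy-Sobolev bubble $W_{0,1}$ (from \cite{FMS}) forces $\phi_0 \in \text{span}\{\psi_0,\psi_1,\ldots,\psi_h\}$. Passing the orthogonality $\int_\Omega \phi_n W_{\hat{x}_n,\lambda_n}^{2^\sharp-2}Z_i/|\xi^1| = 0$ to the rescaled limit — justified by the tail control in the weighted norm and by $\tau > \bar{k}$, which makes the lattice sums absolutely convergent uniformly in $x$ — yields $\int_{\mathbb{R}^N}\phi_0 W_{0,1}^{2^\sharp-2}\psi_i/|y| = 0$ for each $i$, hence $\phi_0 \equiv 0$. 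This contradicts the fact that the rescaled value $\tilde{\phi}_n$ near $x_n^\star$ stays bounded away from zero, completing the proof. The main obstacle is the second step: one must combine the two types of smallness in Lemma \ref{lm2.4} (the small prefactor $\lambda_n^{-2\theta/(N-2)}$ and the extra spatial decay $(\cdot)^{-\theta}$) in just the right way to localize the near-maximizer at a single bubble, and must use the condition $\lambda_n L_n \to \infty$ together with $\tau > \bar{k}$ to handle the infinite lattice sums appearing in both the norm and the orthogonality conditions.
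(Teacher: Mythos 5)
Your proposal is correct and follows essentially the same route as the paper: the Green-representation estimates (Lemma \ref{lm2.4}, the bounds on the $c_i$, and the $f$-term) yield the pointwise inequality \eqref{eqs2.68}, which localizes a near-maximizer of the weighted ratio to an $O(1/\lambda_n)$ ball around a bubble center, and the rescaled blow-up limit is killed by non-degeneracy plus the orthogonality conditions in $\mathbf{E}$, contradicting $\|\phi_n\|_*=1$. The only differences are presentational (the paper first derives \eqref{eqs2.68} as a direct chain of estimates and then runs the contradiction argument for injectivity) and a slightly cruder bound on $\int f_n\,\partial_j(PW)$ on your part, which is harmless in the contradiction framework since $\|f_n\|_{**}\to 0$.
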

	\begin{proof}
		We write
		\begin{equation}\label{eqs2.53}
			\begin{aligned}
				\phi(x)= & \left(2^{\sharp}-1\right) \int_{\Omega} G(x, \xi) \frac{M(\xi)\left(P W_{\hat{x},\lambda}\right)^{2^{\sharp}-2} \phi(\xi)}{|\xi^1|} \mathrm{d}\xi \\
				& +\int_{\Omega} G(x, \xi)\Bigl(f(\xi)+\sum_{i=1}^{h+1} c_{i} \frac{W_{\hat{x},\lambda}^{2^{\sharp}-2} Z_{i}}{|\xi^1|} \Bigr)\mathrm{d}\xi.
			\end{aligned}
		\end{equation}
		On the one hand, the first term in the right hand side of \eqref{eqs2.53} has been estimated in Lemma \ref{lm2.4}, that is 
		\begin{align}\label{eqs2.54} 
			& \quad \left(2^{\sharp}-1\right) \Bigl|\int_{\Omega} G(x, \xi) \frac{M(\xi)\left(P W_{\hat{x},\lambda}\right)^{2^{\sharp}-2} \phi(\xi)}{|\xi^1|} \mathrm{d}\xi\Bigr| \nonumber\\
			& \leq C\|\phi\|_{*} \Bigl[\sigma(x) \sum_{j=0}^{\infty} \frac{\lambda^{\frac{N-2}{2}}}{\left(1+\lambda|y|+\lambda\left|z-\hat{z}_L^{j}\right|\right)^{\frac{N-2}{2}+\tau+\theta}}\\
			& \quad +\lambda^{-\frac{2 \theta}{N-2}} \sigma(x) \sum_{j=0}^{\infty} \frac{\lambda^{\frac{N-2}{2}}}{\left(1+\lambda|y|+\lambda\left|z-\hat{z}_L^{j}\right|\right)^{\frac{N-2}{2}+\tau}}\Bigr]. \nonumber
		\end{align}
		On the other hand, it is easy to see that
		\begin{equation}\label{eqs2.55}
			\begin{aligned}
				& \quad \Bigl|\int_{\Omega} G(x, \xi) f(\xi) \mathrm{d}\xi\Bigr|\\
				& \leq C\|f\|_{**} \int_{\Omega} G(x, \xi) \sigma(\xi) \sum_{j=0}^{\infty} \frac{\lambda^{\frac{N}{2}}}{\left|\xi^1\right|\left(1+\lambda\left|\xi^1\right|+\lambda\left|\xi^2-\hat{z}_L^j\right|\right)^{\frac{N}{2}+\tau}} \mathrm{d}\xi \\
				& =C\|f\|_{**} \int_{\mathbb{R}^{N}} \frac{1}{|x-\xi|^{N-2}} \sigma(\xi) \sum_{j=0}^{\infty} \frac{\lambda^{\frac{N}{2}}}{|\xi^1|\left(1+\lambda\left|\xi^1\right|+\lambda\left|\xi^2-\hat{z}_L^j\right|\right)^{\frac{N}{2}+\tau}} \mathrm{d}\xi \\
				& \leq C\|f\|_{**} \sigma(x) \sum_{j=0}^{\infty} \frac{\lambda^{\frac{N-2}{2}}}{\left(1+\lambda\left|y\right|+\lambda\left|z-\hat{z}_L^j\right|\right)^{\frac{N-2}{2}+\tau}}
			\end{aligned}
		\end{equation} 
		and
		\begin{equation}\label{eqs2.56}
			\begin{aligned}
				& \quad \Bigl|\int_{\Omega} G(x, \xi) \frac{W_{\hat{x},\lambda}^{2^{\sharp}-2} Z_{i}}{|\xi^1|} \mathrm{d}\xi\Bigr| \\
				& \leq C \lambda^{\alpha(i)} \int_{\Omega} G(x, \xi) \frac{W_{\hat{x},\lambda}^{2^{\sharp}-1}}{|\xi^1|} \mathrm{d}\xi \\
				& \leq C \lambda^{\alpha(i)} \sum_{j=0}^{\infty} \frac{\lambda^{\frac{N-2}{2}}}{\left(1+\lambda |y|+\lambda|z-\hat{z}_{L}^{j}|\right)^{\frac{N-2}{2}+\tau}}.
			\end{aligned}
		\end{equation}
		
		It remains to estimate $c_{i}$. We use \eqref{eqs2.52} to find that
		\begin{equation}\label{eqs2.57}
			-\sum_{i=1}^{h+1} c_{i} \int_{\Omega} \frac{W_{\hat{x},\lambda}^{2^{\sharp}-2} Z_{i}}{|\xi^1|} \partial_{m} P W_{\hat{x},\lambda} =\int_{\Omega}\Bigl[\left(2^{\sharp}-1\right) \frac{M(\xi) \left(P W_{\hat{x},\lambda}\right)^{2^{\sharp}-2} \phi}{|\xi^1|}+f\Bigr] \partial_{m} P W_{\hat{x},\lambda}.
		\end{equation}
		We have
		\begin{equation}\label{eqs2.58}
			\Bigl|\int_{\Omega} f \partial_{m} P W_{\hat{x},\lambda} \Bigr| \leq C\|f\|_{**} \lambda^{\alpha(m)} \int_{\Omega} \sigma(\xi) \sum_{j=0}^{\infty} \frac{\lambda^{\frac{N}{2}}}{\left|\xi^1\right|\left(1+\lambda\left|\xi^1\right|+\lambda\left|\xi^2-\hat{z}_L^j\right|\right)^{\frac{N}{2}+\tau}} \sum_{j=0}^{\infty} W_{\hat{x}_L^j, \lambda}.
		\end{equation}
		Similar to the proofs of \eqref{eq2.23} and \eqref{eq2.24}, we can prove that
		\begin{align}\label{eqs2.59} 
			& \quad \int_{B_1(\hat{x})} \sigma(\xi) \sum_{j=0}^{\infty} \frac{\lambda^{\frac{N}{2}}}{\left|\xi^1\right|\left(1+\lambda\left|\xi^1\right|+\lambda\left|\xi^2-\hat{z}_L^j\right|\right)^{\frac{N}{2}+\tau}} \sum_{j=0}^{\infty} W_{\hat{x}_L^j, \lambda} \nonumber\\
			& \leq C \lambda^{N-1} \int_{B_1(\hat{x})} \frac{\sigma(\xi)}{\left|\xi^1\right|\left(1+\lambda\left|\xi^1\right|+\lambda\left|\xi^2-\hat{z}\right|\right)^{N-2+\frac{N}{2}+\tau}} \\
			& \leq C \lambda^{N-1} \int_{B_1(\hat{x})}\left(\frac{1+\lambda\left|\xi^1\right|+\lambda\left|\xi^2-\hat{z}\right|}{\lambda}\right)^\tau \frac{1}{\left|\xi^1\right|\left(1+\lambda\left|\xi^1\right|+\lambda\left|\xi^2-\hat{z}\right|\right)^{N-2+\frac{N}{2}+\tau}} \nonumber\\ 
			& \leq C \frac{\lambda^{N-1}}{\lambda^\tau} \int_{B_{\lambda}(0)} \frac{1}{\frac{|{\xi}^1|}{\lambda}\left(1+|{\xi}^1|+|{\xi}^2|\right)^{\frac{3N}{2}-2}} \frac{1}{\lambda^{N}}\leq \frac{C}{\lambda^\tau}. \nonumber
		\end{align}
		Moreover, we have
		\begin{equation}\label{eqs2.60}
			\begin{aligned}
				& \quad \int_{\Omega \backslash B_1(\hat{x})} \sigma(\xi) \frac{\lambda^{\frac{N}{2}}}{\left|\xi^1\right|\left(1+\lambda\left|\xi^1\right|+\lambda\left|\xi^2-\hat{z}\right|\right)^{\frac{N}{2}+\tau}} W_{\hat{x},\lambda} \\
				& \leq \int_{\mathbb{R}^{N} \backslash B_{\lambda}(0)} \frac{1}{\left|\xi^1\right|\left(1+\left|\xi^1\right|+\left|\xi^2\right|\right)^{N-2+\frac{N}{2}+\tau}} \leq \frac{C}{\lambda^{N-2-\theta}}.
			\end{aligned}
		\end{equation}
		For $\xi \in \Omega$, it holds that
		\begin{equation*}
			\begin{aligned}
				\sum_{j=1}^{\infty} W_{\hat{x}_L^j, \lambda} & \leq \frac{C \lambda^{\frac{N-2}{2}}}{\left(1+\lambda\left|\xi^1\right|+\lambda\left|\xi^2-\hat{z}\right|\right)^{2\theta}} \sum_{j=1}^{\infty} \frac{1}{(1+\lambda\left|\xi^1\right|+\lambda\left|\xi^2-\hat{z}_L^j\right|)^{N-2-2 \theta}} \\
				& \leq \frac{C \lambda^{\frac{N-2}{2}}}{\left(1+\lambda\left|\xi^1\right|+\lambda\left|\xi^2-\hat{z}\right|\right)^{2\theta}} \frac{1}{(\lambda L)^{N-2-2 \theta}}
			\end{aligned}
		\end{equation*}
		and
		\begin{equation*}
			\sum_{j=1}^{\infty} \frac{\lambda^{\frac{N}{2}}}{\left|\xi^1\right|\left(1+\lambda\left|\xi^1\right|+\lambda\left|\xi^2-\hat{z}_L^j\right|\right)^{\frac{N}{2}+\tau}} \leq \frac{C \lambda^{\frac{N}{2}}}{|y| \left(1+\lambda\left|\xi^1\right|+\lambda\left|\xi^2-\hat{z}\right|\right)^{2+\theta}} \frac{1}{(\lambda L)^{\frac{N}{2}+\tau-2-\theta}}.
		\end{equation*}
		Thus, we obtain
		\begin{equation}\label{eqs2.61}
			\begin{aligned}
				& \quad \int_{\Omega \backslash B_1(\hat{x})} \sigma(\xi) \frac{\lambda^{\frac{N}{2}}}{\left|\xi^1\right|\left(1+\lambda\left|\xi^1\right|+\lambda\left|\xi^2-\hat{z}\right|\right)^{\frac{N}{2}+\tau}} \sum_{j=1}^{\infty} W_{\hat{x}_L^j, \lambda} \\
				& \leq \int_{\mathbb{R}^{N} \backslash B_{\lambda}(0)} \frac{1}{|\xi^1| (1+|\xi^1|+|\xi^2|)^{2 \theta+\frac{N}{2}+\tau}} \frac{C}{(\lambda L)^{N-2-2 \theta}} \leq \frac{C}{(\lambda L)^{N-2-2 \theta}}
			\end{aligned}
		\end{equation}
		and
		\begin{equation}\label{eqs2.62}
			\begin{aligned}
				& \quad \int_{\Omega \backslash B_1(\hat{x})} \sigma(\xi) \sum_{j=1}^{\infty} \frac{\lambda^{\frac{N}{2}}}{\left|\xi^1\right|\left(1+\lambda\left|\xi^1\right|+\lambda\left|\xi^2-\hat{z}_L^j\right|\right)^{\frac{N}{2}+\tau}} W_{\hat{x},\lambda} \\
				& \leq \int_{\mathbb{R}^{N} \backslash B_{\lambda}(0)} \frac{1}{\left|\xi^1\right| (1+\left|\xi^1\right|+\left|\xi^2\right|)^{N+\theta}} \frac{C}{(\lambda L)^{\frac{N}{2}+\tau-2-\theta}} \leq \frac{C}{\lambda ^{N-2-\theta}}.
			\end{aligned}
		\end{equation}
		Therefore, there holds
		\begin{align}\label{eqs2.63} 
			& \quad \int_{\Omega \backslash B_1(\hat{x})} \sigma(\xi) \sum_{j=1}^{\infty} \frac{\lambda^{\frac{N}{2}}}{\left|\xi^1\right|\left(1+\lambda\left|\xi^1\right|+\lambda\left|\xi^2-\hat{z}_L^j\right|\right)^{\frac{N}{2}+\tau}} \sum_{j=1}^{\infty} W_{\hat{x}_L^j, \lambda} \nonumber\\
			& \leq C \lambda^{N-1} \int_{\Omega \backslash B_1(\hat{x})} \Bigl(\frac{1}{\left|\xi^1\right| \left(1+\lambda\left|\xi^1\right|+\lambda\left|\xi^2-\hat{z}\right|\right)^{\frac{N}{2}}} \frac{1}{(\lambda L)^{\tau}} \\
			&\quad \quad \quad \quad \quad \quad \quad \quad \frac{1}{ \left(1+\lambda\left|\xi^1\right|+\lambda\left|\xi^2-\hat{z}\right|\right)^{\frac{N-2}{2}+\theta}} \frac{1}{(\lambda L)^{\frac{N-2}{2}-\theta}} \Bigr) \nonumber\\
			& \leq C \int_{\mathbb{R}^{N} \backslash B_{\lambda}(0)} \frac{1}{\left|\xi^1\right| (1+\left|\xi^1\right|+\left|\xi^2\right|)^{N-1+\theta}} \frac{1}{(\lambda L)^{N-2-2 \theta}} \leq \frac{C}{(\lambda L)^{N-2-2 \theta}}. \nonumber
		\end{align}
		Combining \eqref{eqs2.60}-\eqref{eqs2.63}, we obtain
		\begin{equation}\label{eqs2.64}
			\int_{\Omega \backslash B_1(\hat{x})} \sigma(\xi) \sum_{j=0}^{\infty} \frac{\lambda^{\frac{N}{2}}}{\left|\xi^1\right|\left(1+\lambda\left|\xi^1\right|+\lambda\left|\xi^2-\hat{z}_L^j\right|\right)^{\frac{N}{2}+\tau}} \sum_{j=0}^{\infty} W_{\hat{x}_L^j, \lambda} \leq \frac{C}{\lambda^{N-2-\theta}}.
		\end{equation}
		It follows from \eqref{eqs2.58}, \eqref{eqs2.59} and \eqref{eqs2.64} that
		\begin{equation}\label{eqs2.65}
			\Bigl|\int_{\Omega} f \partial_{m} P W_{\hat{x},\lambda}\Bigr| \leq C \frac{\|f\|_{**} \lambda^{\alpha(m)}}{\lambda^\tau}.
		\end{equation}
		Using \eqref{eqs2.57}, \eqref{eq2.18}, \eqref{eqs2.65} and \eqref{eqs2.38}, we see
		\begin{equation*}\label{eqs2.66}
			\left|c_{i}\right| \leq C\left(\|f\|_{**}+o(1)\|\phi\|_{*}\right) \frac{\lambda^{-\alpha(i)}}{\lambda^\tau},
		\end{equation*} 
		which with \eqref{eqs2.56} gives that
		\begin{equation}\label{eqs2.67}
			\begin{aligned}
				& \quad \Bigl| \int_{\Omega} G(x, \xi) \sum_{i=1}^{h+1} c_{i} \frac{W_{\hat{x},\lambda}^{2^{\sharp}-2} Z_{i}}{|\xi^1|} \mathrm{d}\xi\Bigr| \\
				& \leq C\left(\|f\|_{**}+o(1)\|\phi\|_{*}\right) \frac{1}{\lambda^\tau} \sum_{j=0}^{\infty} \frac{\lambda^{\frac{N-2}{2}}}{\left(1+\lambda |y|+\lambda|z-\hat{z}_{L}^{j}|\right)^{\frac{N-2}{2}+\tau}} \\
				& \leq C\left(\|f\|_{**}+o(1)\|\phi\|_{*}\right) \sigma(x) \sum_{j=0}^{\infty} \frac{\lambda^{\frac{N-2}{2}}}{\left(1+\lambda |y|+\lambda|z-\hat{z}_{L}^{j}|\right)^{\frac{N-2}{2}+\tau}}.
			\end{aligned}
		\end{equation}
		Combining \eqref{eqs2.53}, \eqref{eqs2.54}, \eqref{eqs2.55} and \eqref{eqs2.67}, we are led to
		\begin{equation}\label{eqs2.68}
			\begin{aligned}
				&\quad |\phi(x)|\Bigl(\sigma(x) \sum_{j=0}^{\infty} \frac{\lambda^{\frac{N-2}{2}}}{\left(1+\lambda |y|+\lambda|z-\hat{z}_{L}^{j}|\right)^{\frac{N-2}{2}+\tau}}\Bigr)^{-1} \\
				& \leq C\Bigl(\|f\|_{**}+o(1)\|\phi\|_{*}+\frac{\sum_{j=0}^{\infty} \frac{1}{\left(1+\lambda |y|+\lambda|z-\hat{z}_{L}^{j}|\right)^{\frac{N-2}{2}+\tau+\theta}}}{\sum_{j=0}^{\infty} \frac{1}{\left(1+\lambda |y|+\lambda|z-\hat{z}_{L}^{j}|\right)^{\frac{N-2}{2}+\tau}}}\|\phi\|_{*}\Bigr).
			\end{aligned}
		\end{equation} 
		Thus we have proved that $\|\phi\|_{*} \leq C\|f\|_{**}$.
		
		Next we prove that $I-T$ is injective in $\mathbf{E}$. Suppose that there exist sequences $L_n \rightarrow+\infty$, $\hat{x}^{n} \rightarrow 0$, $\lambda_n \rightarrow+\infty$, $f_n \in \mathbf{F}$ and $\phi_n \in \mathbf{E}$, such that $\phi_n$ solves \eqref{eqs2.52} with $f=f_n$, $c_i=c_i^n$, $\left\|\phi_n\right\|_{*}=1$ and $\left\|f_n\right\|_{**} \rightarrow 0$ as $n \rightarrow\infty$. Denote that $\hat{x}^{n,j}_{L_n}=(0,\hat{z}^{n,j}_{L_n})\in \mathbb{R}^k \times \mathbb{R}^{h}$, where $\hat{z}^{n,j}_{L_n}=\hat{z}^{n}-L_n\tilde{P}^{j}$.
		
		From \eqref{eqs2.68}, we see that the maximum of $|\phi_n (x)|\Bigl(\sigma(x) \sum_{j=0}^{\infty} \frac{\lambda_n^{\frac{N-2}{2}}}{\left(1+\lambda_n|y|+\lambda_n|z-\hat{z}_{L_n}^{n,j}|\right)^{\frac{N-2}{2}+\tau}}\Bigr)^{-1}$ in $\Omega$ can only be achieved in $B_{R \lambda_{n}^{-1}}(0)$ for some sufficiently large $R>0$.
		Now we consider a sequence $\tilde{\phi}_n(x):=\lambda_n^{-\frac{N-2}{2}} \phi_n\left(\lambda_n^{-1} x+\hat{x}^{n}\right)$. 
		With the estimate of $c_i^n$ in Lemma \ref{lm2.3}, we may apply a combination of Remark \ref{re1.3}, the $L^p$ estimate and the Sobolev embedding theorem to conclude that as $n \to \infty$, the sequence $\tilde{\phi}_n$ converges in $C^1_{\text{loc}}(\mathbb{R}^N)$ 
		to $\phi_{0}$, which satisfies 
		$$
		-\Delta \phi_{0}-\left(2^{\sharp}-1\right) \frac{W_{0,1}^{2^{\sharp}-2} \phi_{0}}{|y|}=0 \,\,\, \text {in} \,\,\, \mathbb{R}^{N}.
		$$
		Since $\phi_n \in \mathbf{E}$, we conclude that $\phi_{0}=0$. This is a contradiction to $\left\|\phi_n\right\|_{*}=1$.
	\end{proof} 
	
	Now, we will prove Proposition \ref{pro2.2}.
	\begin{proof}[\textbf{Proof of Proposition \ref{pro2.2}.}]
		Since $T$ is compact and $I-T$ is injective, Fredholm theorem implies that $I-T$ is surjective.
	\end{proof}

	\section{Existence of periodic solutions} \label{sec3}
	In this section, we will prove Theorem \ref{th1.1}. We first solve the following problem
	\begin{equation}\label{eq3.1}
		-\Delta\left(P W_{\hat{x},\lambda}+\omega\right)-\frac{M(x)\left(P W_{\hat{x},\lambda}+\omega\right)_{+}^{2^{\sharp}-1}}{|y|}=\sum_{i=1}^{h+1} c_{i} \frac{W_{\hat{x},\lambda}^{2^{\sharp}-2} Z_{i}}{|y|},
	\end{equation}
	for some constants $c_{i}$. To achieve this purpose, we initially give the estimates of $N(\omega)$ and $l$.
	
	\begin{lemma}\label{lm3.2}
		If $N \geq 5$, then
		$$
		\|N(\omega)\|_{**} \leq C\|\omega\|_{*}^{2^{\sharp}-1}, \ \text{for any $\omega \in \mathbf{E}$}.
		$$
	\end{lemma}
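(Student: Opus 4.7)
The plan is to combine a pointwise bound on $N(\omega)$ with a single application of Hölder's inequality in the lattice index $j$. Since $N \geq 5$, the exponent $p := 2^{\sharp}-1 = \frac{N}{N-2}$ lies in $(1,2)$, so the elementary inequality
$$\bigl|(a+b)_{+}^{p} - a^{p} - p\,a^{p-1}b\bigr| \leq C|b|^{p}, \qquad a \geq 0,\ b \in \mathbb{R},$$
is available (it is checked separately in the regimes $|b|\le a$ and $|b|>a$ and is the standard replacement for a second-order Taylor estimate when $p<2$). Applied with $a = PW_{\hat{x}_{0,L},\lambda_L}$ and $b = \omega$, and combined with the uniform bound on $M$ from $(A_2)$, this yields the pointwise estimate $|N(\omega)(x)| \leq C|\omega(x)|^{p}/|y|$.

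Next I would insert the definition of $\|\omega\|_{*}$, writing $|\omega(x)| \leq \|\omega\|_{*}\,\sigma(x)\sum_{j=0}^{\infty} \lambda^{(N-2)/2}/d_j^{s}$ with $d_j := 1+\lambda|y|+\lambda|z-\hat{z}_{L}^{j}|$ and $s := \frac{N-2}{2}+\tau$. Because $p\cdot\frac{N-2}{2} = \frac{N}{2}$, the $\lambda$-powers on both sides match the target weight appearing in $\|\cdot\|_{**}$, and the common factor $|y|^{-1}$ also matches. Moreover $\sigma(x)^{p-1}\le 1$, so after cancellation the lemma reduces to the purely algebraic sum inequality
$$\Bigl(\sum_{j=0}^{\infty} d_j^{-s}\Bigr)^{p} \leq C \sum_{j=0}^{\infty} d_j^{-t}, \qquad t := \frac{N}{2}+\tau,$$
holding uniformly in $x\in\Omega$, $\lambda$ and $L$ (in the regime \eqref{aeq2.4}).

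For this I would apply Hölder's inequality with exponents $p$ and $p' = p/(p-1) = N/2$. Splitting $d_j^{-s} = d_j^{-t/p}\cdot d_j^{-(s-t/p)}$ and computing $\bigl(s - t/p\bigr)\,p' = \tau$, one obtains
$$\Bigl(\sum_{j} d_j^{-s}\Bigr)^{p} \leq \Bigl(\sum_{j} d_j^{-t}\Bigr)\Bigl(\sum_{j} d_j^{-\tau}\Bigr)^{p-1}.$$
It therefore suffices to show that $\sum_{j} d_j^{-\tau}$ is bounded independently of $x$, $\lambda$ and $L$. Since $\tau>\bar{k}$ by our choice of $\theta$, and the points $\hat{z}_{L}^{j}$ lie on a $\bar{k}$-dimensional lattice of spacing $L\geq 1$, this tail sum is controlled by (a rescaling of) the convergent integral $\int_{0}^{\infty} r^{\bar{k}-1}(1+r)^{-\tau}\,dr$, precisely the same bound that was used repeatedly in Section~\ref{sec2} (e.g. in the proofs of \eqref{eq2.23}--\eqref{eq2.24}).

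The only point requiring care is the opening pointwise inequality, since when $p<2$ a naive second-order Taylor expansion would introduce the factor $a^{p-2}$ which blows up as $a\to 0$; the fix is the standard case split $|\omega|\lesssim PW$ vs.\ $|\omega|\gtrsim PW$, after which all ingredients are routine and no delicate new estimate of the kind appearing in Lemma~\ref{lm2.3} is required.
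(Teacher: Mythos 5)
Your proposal is correct and follows essentially the same route as the paper's proof: the pointwise bound $|N(\omega)|\leq C|\omega|^{2^{\sharp}-1}/|y|$, insertion of the $\|\cdot\|_{*}$-weight with $\sigma(x)^{2^{\sharp}-2}\leq 1$, and the Hölder step with exponents $\frac{N}{N-2}$ and $\frac{N}{2}$ reducing matters to the bounded lattice sum $\sum_{j}d_j^{-\tau}\leq C$ (valid since $\tau>\bar{k}$). Your exponent bookkeeping $\bigl(s-t/p\bigr)p'=\tau$ matches the paper's computation exactly.
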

	\begin{proof}
		Since $\tau>\bar{k}$, we can check that for $x \in \Omega$, 
		$$
		\sum_{j=0}^{\infty}\frac{1}{\left(1+\lambda\left|y\right|+\lambda\left|z-\hat{z}_L^j\right|\right)^{\tau}} \leq C.
		$$
		Using the H\"older inequality, we have
		\begin{equation*}
			\begin{aligned}
				|N(\omega)| & \leq \frac{C|\omega|^{2^{\sharp}-1}}{|y|} \leq \frac{C\|\omega\|_*^{2^{\sharp}-1}\sigma(x)}{|y|}\Bigl(\sum_{j=0}^{\infty} \frac{\lambda^{\frac{N-2}{2}}}{\left(1+\lambda\left|y\right|+\lambda\left|z-\hat{z}_L^j\right|\right)^{\frac{N-2}{2}+\tau}}\Bigr)^{2^{\sharp}-1} \\
				& \leq \frac{C\|\omega\|_*^{2^{\sharp}-1}\sigma(x)}{|y|}\sum_{j=0}^{\infty} \frac{\lambda^{\frac{N}{2}}}{\left(1+\lambda|y|+\lambda\left|z-\hat{z}_L^{j}\right|\right)^{\frac{N}{2}+\tau}} \Bigl(\sum_{j=0}^{\infty} \frac{1}{\left(1+\lambda|y|+\lambda\left|z-\hat{z}_L^{j}\right|\right)^{\tau}}\Bigr)^{\frac{2}{N-2}} \\
				& \leq \frac{C\|\omega\|_*^{2^{\sharp}-1}\sigma(x)}{|y|}\sum_{j=0}^{\infty} \frac{\lambda^{\frac{N}{2}}}{\left(1+\lambda\left|y\right|+\lambda\left|z-\hat{z}_L^j\right|\right)^{\frac{N}{2}+\tau}}.
			\end{aligned}
		\end{equation*}
		Hence, the result holds.
	\end{proof}

	\begin{lemma}\label{lm3.3}
		If $N \geq 5$, then
		$$
		\|l\|_{**} \leq \frac{C}{\lambda^{1+\theta}}.
		$$
	\end{lemma}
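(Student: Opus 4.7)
My plan is to decompose $l$ into two pieces capturing separately the effect of the non-constant curvature and the effect of the periodic projection, and then bound each with respect to the weighted $\|\cdot\|_{**}$ norm. Writing $W=W_{\hat{x},\lambda}$ and $PW=PW_{\hat{x},\lambda}$, I will split
\begin{equation*}
l = \underbrace{\frac{(M(x)-1)(PW)^{2^{\sharp}-1}}{|y|}}_{=:\,l_1} + \underbrace{\frac{(PW)^{2^{\sharp}-1}-W^{2^{\sharp}-1}}{|y|}}_{=:\,l_2},
\end{equation*}
and show separately that $\|l_1\|_{**}, \|l_2\|_{**} \leq C\lambda^{-(1+\theta)}$. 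Because $\beta>N-2\ge 3$ and $\theta>0$ is as small as we wish, the required rate is far from tight; the actual work is in verifying that the pointwise bound has the right $j$-summed, $\sigma(x)$-weighted structure built into $\|\cdot\|_{**}$.

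For $l_1$, I will split the region into $B_\delta(\hat{x})\cap\Omega$ and its complement. Inside $B_\delta(\hat{x})$, condition $(A_2)$ gives $|M(\xi)-1|\leq C|\xi|^{\beta}$; since $|\hat x|\leq \lambda^{-1-\theta}$ and $PW\leq \sum_j W_{\hat x_L^j,\lambda}$, the bound \eqref{eq2.23} produces
\begin{equation*}
|l_1(x)| \leq \frac{C|x|^{\beta}\,\lambda^{\frac{N+2}{2}}}{|y|(1+\lambda|y|+\lambda|z-\hat z|)^{N+2}}.
\end{equation*}
Using $|x|^\beta \lesssim \lambda^{-\beta}(1+\lambda|y|+\lambda|z-\hat z|)^\beta$ together with $\sigma(x)\ge C((1+\lambda|y|+\lambda|z-\hat z|)/\lambda)^\tau$ on $B_\delta(\hat x)$, I can absorb the extra polynomial factor into the $\sigma$-weight and the $j=0$ term of the defining sum of $\|\cdot\|_{**}$, leaving an overall factor of $\lambda^{-\beta}\leq \lambda^{-(1+\theta)}$. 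Outside $B_\delta(\hat x)$, $|M-1|$ is bounded while $PW$ decays like a single bubble, and the same argument as in the $\xi\in \Omega\setminus B_1(\hat x)$ estimates from Lemma \ref{lm2.3} yields an even smaller contribution, of order $\lambda^{-(N-2-\theta)}$. The off-center translates $j\ge 1$ are treated exactly as in \eqref{eq2.23}, contributing $(\lambda L)^{-(N-2)}\sim \lambda^{-\beta}$, again acceptable.

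For $l_2$, I use $\varphi_{\hat x,\lambda}=W-PW\ge 0$ so that $0\leq PW\leq W$ and the elementary inequality $|a^{2^{\sharp}-1}-b^{2^{\sharp}-1}|\leq C|a-b|(a^{2^{\sharp}-2}+b^{2^{\sharp}-2})$ gives
\begin{equation*}
|l_2(x)| \leq \frac{C\,W^{2^{\sharp}-2}\,\varphi_{\hat x,\lambda}}{|y|}.
\end{equation*}
Invoking Lemma \ref{lmA.2}, which controls $\varphi_{\hat x,\lambda}$ by $\sum_{j\ge 1} W_{\hat x_L^j,\lambda}$, and the bounds \eqref{eq2.23}--\eqref{eq2.24}, I reduce to estimates structurally identical to $J_1$ in Lemma \ref{lm2.3}. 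This yields $|l_2(x)|$ bounded by $(\lambda L)^{-(N-2)}\sim \lambda^{-\beta}$ times the summed $\frac{N}{2}+\tau$-decay weight divided by $|y|$, which is exactly what $\|\cdot\|_{**}$ measures.

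The main obstacle is technical rather than conceptual: keeping track of the $\sigma(x)$ weight and the full lattice sum $\sum_{j=0}^\infty$ on the right-hand side while producing a clean pointwise estimate, especially on the transition region between $B_1(\hat x)$ and its complement, where $\sigma(x)$ changes character. In each region I will apply the two-sided comparison $\sigma(x)\asymp((1+\lambda|y|+\lambda|z-\hat z|)/\lambda)^\tau\wedge 1$ to reduce the $j=0$ contribution to the canonical weight, and treat $j\ge 1$ with the uniform bound $1/(\lambda L)^{\text{suitable power}}$ that, under the calibration \eqref{aeq2.4}, is dominated by $\lambda^{-\beta}\leq C\lambda^{-(1+\theta)}$. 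Combining the two estimates concludes the proof.
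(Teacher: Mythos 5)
Your decomposition of $l$ is essentially the paper's (the two splittings differ only by which factor carries the bounded function $M$), and your treatment of the $(M-1)$ piece is structurally right, modulo two slips: on $B_\delta(\hat x)$ one has $(PW_{\hat x,\lambda})^{2^{\sharp}-1}\lesssim W_{\hat x,\lambda}^{2^{\sharp}-1}\sim \lambda^{N/2}(1+\lambda|y|+\lambda|z-\hat z|)^{-N}$, not $\lambda^{(N+2)/2}(\cdot)^{-(N+2)}$, since $2^{\sharp}-1=\tfrac{N}{N-2}$ rather than $\tfrac{N+2}{N-2}$; and the gain outside $B_\delta(\hat x)$ relative to the $\|\cdot\|_{**}$-weight is exactly $(1+\lambda|y|+\lambda|z-\hat z|)^{-(N/2-\tau)}\le \lambda^{-(1+\theta)}$, not $\lambda^{-(N-2-\theta)}$ --- the required rate is attained with no margin, so you cannot treat that region as negligible.

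The genuine gap is in $l_2$. Your claim that $\varphi_{\hat x,\lambda}=W-PW\ge 0$, hence $0\le PW\le W$, is false: Lemma \ref{lmA.2} gives $\varphi_{\hat x,\lambda}=-B\lambda^{-\frac{N-2}{2}}\sum_{j\ge 1}\Gamma(x,\hat x+P_L^j)+O(\cdot)<0$ near the centre, because the strip Green's function dominates the free one. Consequently the pointwise bound $|l_2|\le CW^{2^{\sharp}-2}|\varphi_{\hat x,\lambda}|/|y|$ is unjustified: writing $PW\le W+|\varphi_{\hat x,\lambda}|$ and using $2^{\sharp}-2=\tfrac{2}{N-2}\in(0,1]$, your elementary inequality actually yields $|l_2|\le C\bigl(W^{2^{\sharp}-2}|\varphi_{\hat x,\lambda}|+|\varphi_{\hat x,\lambda}|^{2^{\sharp}-1}\bigr)/|y|$, and the second term is not absorbed by the first (near the lateral boundary of the strip $|\varphi_{\hat x,\lambda}|\approx W$, and you never prove $|\varphi_{\hat x,\lambda}|\le CW$ on all of $\Omega$). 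This extra term needs its own estimate: on $\Omega\setminus B_1(\hat x)$ one only has $|\varphi_{\hat x,\lambda}|\le W+PW\le 2\sum_{j\ge 0}W_{\hat x_L^j,\lambda}$ (Lemma \ref{lmA.2} is an expansion valid only on the unit ball), so one must control $\bigl(\sum_j W_{\hat x_L^j,\lambda}\bigr)^{2^{\sharp}-1}/|y|$ by the H\"older resummation that uses $\tau>\bar k$, exactly as in the paper's estimate \eqref{eq3.10}. Once this term is added and estimated, the argument closes along the paper's lines; as written, it is missing.
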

	\begin{proof}
		We have
		$$
		l=\frac{M(x)\Bigl(\left(P W_{\hat{x},\lambda}\right)^{2^{\sharp}-1}-W_{\hat{x},\lambda}^{2^{\sharp}-1}\Bigr)}{|y|} + \frac{(M(x)-1) W_{\hat{x},\lambda}^{2^{\sharp}-1}}{|y|}:=J_1+J_2.
		$$ 	
		Since $2^{\sharp} > 1$, it holds that
		\begin{equation*}\label{eq3.5}
			\left|J_1\right| \leq C \frac{W_{\hat{x},\lambda}^{2^{\sharp}-2}\left|\varphi_{\hat{x},\lambda}\right|}{|y|} + C\frac{\left|\varphi_{\hat{x},\lambda}\right|^{2^{\sharp}-1}}{|y|}:=J_{11}+J_{12}.
		\end{equation*}
		To estimate $J_{11}$, for $x \in B_1(\hat{x})$, using Lemma \ref{lmA.2} and $\beta>N-2$, we have
		\begin{equation}\label{eq3.6}
			\begin{aligned}
				\left|J_{11}\right| & \leq \frac{C \lambda}{|y| \left(1+\lambda\left|y\right|+\lambda\left|z-\hat{z}\right|\right)^2} \frac{1}{\lambda^{\frac{N-2}{2}} L^{N-2}} \leq \frac{C \lambda^{\frac{N}{2}}}{|y| \left(1+\lambda\left|y\right|+\lambda\left|z-\hat{z}\right|\right)^2} \frac{1}{\lambda^{\beta}} \\
				& \leq \frac{C \lambda^{\frac{N}{2}}}{|y| \left(1+\lambda\left|y\right|+\lambda\left|z-\hat{z}\right|\right)^{\frac{N}{2}+\tau}} \sigma(x) \frac{1}{\lambda^{\beta-\tau-\frac{N}{2}+2}} \\
				& \leq \frac{C \sigma(x) \lambda^{\frac{N}{2}}}{|y| \left(1+\lambda\left|y\right|+\lambda\left|z-\hat{z}\right|\right)^{\frac{N}{2}+\tau}} \frac{1}{\lambda^{1+\theta}}.
			\end{aligned}
		\end{equation}
		For $x \in \Omega \backslash B_1(\hat{x})$, we have
		\begin{equation}\label{eq3.7}
			\begin{aligned}
				\left|J_{11}\right| & \leq C \frac{W_{\hat{x},\lambda}^{2^{\sharp}-2} \sum_{j=0}^{\infty} W_{\hat{x}_L^j, \lambda}}{|y|}\\
				& \leq C \frac{\lambda^{\frac{N}{2}}}{\left(1+\lambda |y|+\lambda |z-\hat{z}|\right)^2} \sum_{j=0}^{\infty} \frac{1}{|y| \left(1+\lambda |y|+\lambda |z-\hat{z}_L^j|\right)^{N-2}} \\
				& \leq \frac{C \lambda^{\frac{N}{2}}}{|y| \left(1+\lambda |y|+\lambda |z-\hat{z}|\right)^{\frac{N}{2}+\tau}} \\
				&\quad \quad \Bigl(\frac{1}{\left(1+\lambda |y|+\lambda |z-\hat{z}|\right)^{\frac{N}{2}-\tau}} + \sum_{j=1}^{\infty} \frac{1}{\left(1+\lambda |y|+\lambda |z-\hat{z}_L^j|\right)^{\frac{N}{2}-\tau}}\Bigr)\\
				& \leq \frac{C \lambda^{\frac{N}{2}}}{|y| \left(1+\lambda |y|+\lambda |z-\hat{z}|\right)^{\frac{N}{2}+\tau}} \Bigl(\frac{1}{\lambda^{\frac{N}{2}-\tau}} + \frac{1}{(\lambda L)^{\frac{N}{2}-\tau}}\Bigr)\\
				& \leq C \sigma(x) \sum_{j=0}^{\infty}\frac{\lambda^{\frac{N}{2}}}{|y| \left(1+\lambda |y|+\lambda |z-\hat{z}_L^j|\right)^{\frac{N}{2}+\tau}} \frac{1}{\lambda^{1+\theta}}.
			\end{aligned}
		\end{equation}
		By \eqref{eq3.6} and \eqref{eq3.7}, we obtain
		\begin{equation}\label{eq3.8}
			\|J_{11}\|_{**}\leq \frac{C}{\lambda^{1+\theta}}.
		\end{equation}
		By the same argument as that of the proof of \eqref{eq3.6}, for $x \in B_1(\hat{x})$, it holds that
		\begin{equation}\label{eq3.9}
			\begin{aligned}
				& \left|J_{12}\right| \leq \frac{C}{|y|} \Bigl(\frac{1}{\lambda^{\frac{N-2}{2}} L^{N-2}}\Bigr)^{2^{\sharp}-1}\leq \frac{C \lambda^{\frac{N}{2}}}{|y| \lambda^{\left(2^{\sharp}-1\right) \beta}} \leq \frac{C \sigma(x) \lambda^{\frac{N}{2}}}{|y| \left(1+\lambda\left|y\right|+\lambda\left|z-\hat{z}\right|\right)^{\frac{N}{2}+\tau}} \frac{1}{\lambda^{1+\theta}}.
			\end{aligned}
		\end{equation}
		Similar to the proof of \eqref{eq3.7}, by H\"older inequality, for $x \in \Omega \backslash B_1(\hat{x})$, we obtain
		\begin{align}\label{eq3.10} 
			|J_{12}| & \leq C\frac{\sum_{j=0}^{\infty} W_{\hat{x}_L^j, \lambda}^{2^{\sharp}-1}}{|y|} \leq \frac{CW_{\hat{x},\lambda}^{2^{\sharp}-1}}{|y|} + C\frac{\sum_{j=1}^{\infty} W_{\hat{x}_L^j, \lambda}^{2^{\sharp}-1}}{|y|} \nonumber\\
			& \leq \frac{C\lambda^{\frac{N}{2}}}{|y|(1+\lambda|y|+\lambda |z-\hat{z}|)^N} + \frac{C}{|y|}\Bigl(\sum_{j=1}^{\infty} \frac{\lambda^{\frac{N-2}{2}}}{(1+\lambda|y|+\lambda |z-\hat{z}_L^j|)^{N-2}}\Bigr)^{\frac{N}{N-2}} \nonumber\\
			& \leq \frac{C\lambda^{\frac{N}{2}}}{|y| (1+\lambda|y|+\lambda |z-\hat{z}|)^{\frac{N}{2}+\tau}}\frac{1}{\lambda^{1+\theta}} \\
			& \quad + C \sum_{j=1}^{\infty} \frac{\lambda^{\frac{N}{2}}}{|y| (1+\lambda|y|+\lambda |z-\hat{z}_L^j|)^{\frac{N}{2}+\tau}} \Bigl(\sum_{j=1}^{\infty} \frac{1}{(1+\lambda|y|+\lambda |z-\hat{z}_L^j|)^{\tau}}\Bigr)^{\frac{2}{N-2}} \nonumber\\
			& \leq C\sigma(x) \sum_{j=0}^{\infty} \frac{\lambda^{\frac{N}{2}}}{|y| (1+\lambda|y|+\lambda |z-\hat{z}_L^j|)^{\frac{N}{2}+\tau}} \Bigl(\frac{1}{\lambda^{1+\theta}}+\frac{1}{\left(\lambda L\right)^{\frac{2\tau}{N-2}}}\Bigr) \nonumber\\
			& \leq C\sigma(x) \sum_{j=0}^{\infty} \frac{\lambda^{\frac{N}{2}}}{|y| (1+\lambda|y|+\lambda |z-\hat{z}_L^j|)^{\frac{N}{2}+\tau}}\frac{1}{\lambda^{1+\theta}}, \nonumber
		\end{align} 
		where in the last inequality, we use the assumption \eqref{aeq2.4}.
		Combining \eqref{eq3.9} and \eqref{eq3.10}, we obtain
		\begin{equation}\label{eq3.11}
			\|J_{12}\|_{**}\leq \frac{C}{\lambda^{1+\theta}}.
		\end{equation}
		Thus, \eqref{eq3.8} and \eqref{eq3.11} yield that
		\begin{equation}\label{eq3.12}
			\|J_1\|_{**}\leq \frac{C}{\lambda^{1+\theta}}.
		\end{equation}
		Now, we estimate $J_2$. 
		When $x \in B_1(\hat{x})$, we see
		\begin{equation}\label{eq3.14}
			\begin{aligned}
				|J_2| &\leq C\frac{|x|^{\beta} W_{\hat{x},\lambda}^{2^{\sharp}-1}}{|y|} 
				\leq C\frac{\lambda^\frac{N}{2}}{|y|(1+\lambda|y|+\lambda|z-\hat{z}|)^{\frac{N}{2}+\tau}} \frac{|\hat{x}|^{\beta}+|x-\hat{x}|^{\beta}}{(1+\lambda|y|+\lambda|z-\hat{z}|)^{\frac{N}{2}-\tau}} \\
				& \leq C\sigma(x) \frac{\lambda^\frac{N}{2}}{|y| (1+\lambda|y|+\lambda|z-\hat{z}|)^{\frac{N}{2}+\tau}} \Bigl(\lambda^{\tau-1}|\hat{x}|^{\beta}+\frac{1}{\lambda^{\frac{N}{2}-\tau}}\Bigr) \\ 
				& \leq	C\sigma(x) \sum_{j=0}^{\infty} \frac{\lambda^\frac{N}{2}}{|y|(1+\lambda|y|+\lambda|z-\hat{z}_L^j|)^{\frac{N}{2}+\tau}} \Bigl(\lambda^{\tau-1}|\hat{x}|^{\beta} + \frac{1}{\lambda^{1+\theta}}\Bigr).	 		
			\end{aligned}
		\end{equation} 
		When $x \in \Omega \backslash B_1(\hat{x})$, it follows that
		\begin{equation}\label{eq3.13}
			|J_2|\leq C\sigma(x) \frac{ \lambda^{\frac{N}{2}}}{|y|(1+\lambda|y|+\lambda|z-\hat{z}|)^{\frac{N}{2}+\tau}} \frac{1}{\lambda^{\frac{N}{2}-\tau}}.
		\end{equation}
		Hence, by \eqref{eq3.14}, \eqref{eq3.13} and \eqref{aeq2.4}, we have proved that
		\begin{equation}\label{eq3.15}
			\left\|J_2\right\|_{**} \leq \frac{C}{\lambda^{1+\theta}}+C \lambda^{\tau-1}|\hat{x}|^{\beta} \leq \frac{C}{\lambda^{1+\theta}}. 
		\end{equation} 
		Combining \eqref{eq3.12} with \eqref{eq3.15}, Lemma \ref{lm3.3} is proved.
	\end{proof}

	Next, we will combine Lemmas \ref{lm3.2} and \ref{lm3.3} with the contraction mapping theorem to prove the following result.
	\begin{proposition}\label{pro3.1}
		Under the assumptions of Theorem \ref{th1.1}, if $L>0$ is sufficiently large, then \eqref{eq3.1} admits a unique solution $\omega=\omega(\hat{x},\lambda)$ in $\mathbf{E}$, such that
		$$
		\|\omega\|_{*} \leq \frac{C}{\lambda^{1+\theta}}.
		$$
	\end{proposition}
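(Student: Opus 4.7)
The plan is to rewrite \eqref{eq3.1} as a fixed point equation in $\mathbf{E}$ and apply the Banach contraction mapping theorem in a small ball. First, observe that \eqref{eq3.1} is equivalent (for $\omega$ such that $PW_{\hat{x},\lambda}+\omega\geq 0$ so the truncation is inactive, which will be justified a posteriori since $\|\omega\|_*$ is small) to the projected problem
\begin{equation*}
\omega - T\omega = (-\Delta)^{-1}\bigl(\mathbf{P} l\bigr) + (-\Delta)^{-1}\bigl(\mathbf{P} N(\omega)\bigr),
\end{equation*}
with $T$ as in \eqref{eq2.15}, provided $\omega\in\mathbf{E}$. Since $\mathbf{P}:\mathbf{Y}\to\mathbf{F}$ is bounded, $(-\Delta)^{-1}:\mathbf{Y}\to\mathbf{X}$ is bounded by Lemma \ref{lm2.1}, and $I-T:\mathbf{E}\to\mathbf{E}$ is a bounded bijection by Proposition \ref{pro2.2} (so $(I-T)^{-1}$ is bounded on $\mathbf{E}$ by the open mapping theorem), the equation reduces to the fixed point problem $\omega = \mathcal{A}(\omega)$, where
\begin{equation*}
\mathcal{A}(\omega) := (I-T)^{-1}\Bigl[(-\Delta)^{-1}\bigl(\mathbf{P} l\bigr) + (-\Delta)^{-1}\bigl(\mathbf{P} N(\omega)\bigr)\Bigr].
\end{equation*}

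Next I would show $\mathcal{A}$ maps the ball $\mathcal{B}:=\{\omega\in\mathbf{E}:\|\omega\|_*\leq C_1 \lambda^{-(1+\theta)}\}$ into itself for a suitably chosen large constant $C_1$. Chaining the boundedness of $(I-T)^{-1}$, $(-\Delta)^{-1}$ and $\mathbf{P}$ with Lemmas \ref{lm3.2} and \ref{lm3.3} yields
\begin{equation*}
\|\mathcal{A}(\omega)\|_* \leq C\bigl(\|l\|_{**}+\|N(\omega)\|_{**}\bigr)\leq C\Bigl(\tfrac{1}{\lambda^{1+\theta}}+\|\omega\|_*^{2^\sharp-1}\Bigr)\leq C\Bigl(\tfrac{1}{\lambda^{1+\theta}}+\tfrac{C_1^{2^\sharp-1}}{\lambda^{(1+\theta)(2^\sharp-1)}}\Bigr).
\end{equation*}
Since $2^\sharp-1=\tfrac{N}{N-2}>1$, the second term is of strictly higher order in $\lambda^{-1}$, so choosing first $C_1$ large and then $\lambda$ (equivalently $L$) large enough gives $\|\mathcal{A}(\omega)\|_*\leq C_1\lambda^{-(1+\theta)}$, i.e., $\mathcal{A}(\mathcal{B})\subset\mathcal{B}$.

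For the contraction estimate, I would establish the Lipschitz analogue of Lemma \ref{lm3.2}: by the mean value theorem applied to $t\mapsto t_+^{2^\sharp-1}$, for $\omega_1,\omega_2\in\mathcal{B}$ one has the pointwise bound $|N(\omega_1)-N(\omega_2)|\leq \tfrac{C}{|y|}\bigl(|\omega_1|^{2^\sharp-2}+|\omega_2|^{2^\sharp-2}\bigr)|\omega_1-\omega_2|$, and the same H\"older argument used in Lemma \ref{lm3.2} (crucially $\tau>\bar{k}$ so that $\sum_j(1+\lambda|y|+\lambda|z-\hat{z}_L^j|)^{-\tau}\leq C$) gives
\begin{equation*}
\|N(\omega_1)-N(\omega_2)\|_{**}\leq C\bigl(\|\omega_1\|_*+\|\omega_2\|_*\bigr)^{2^\sharp-2}\|\omega_1-\omega_2\|_*.
\end{equation*}
Since $2^\sharp-2=\tfrac{2}{N-2}>0$ and $\|\omega_i\|_*\leq C_1\lambda^{-(1+\theta)}$, the operator norm of the nonlinear part of $\mathcal{A}$ on $\mathcal{B}$ is $O(\lambda^{-(1+\theta)\cdot 2/(N-2)})=o(1)$. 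Thus $\mathcal{A}$ is a contraction on $\mathcal{B}$ for $L$ large, and Banach's theorem produces a unique fixed point $\omega=\omega(\hat{x},\lambda)\in\mathcal{B}$, proving both existence and the claimed bound $\|\omega\|_*\leq C\lambda^{-(1+\theta)}$; uniqueness in $\mathbf{E}$ follows by noting any solution must lie in $\mathcal{B}$ via the same a priori bound.

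The main obstacle is really already absorbed into the earlier machinery: the invertibility of $I-T$ with a bound independent of $\lambda,L$ (Proposition \ref{pro2.2}, whose proof relies on the delicate weighted norms and the restriction $\bar{k}<\tfrac{N-2}{2}$), together with the sharp $\lambda^{-(1+\theta)}$ decay of $\|l\|_{**}$ in Lemma \ref{lm3.3}. Once those are in hand, the contraction argument itself is essentially routine, though one should be careful that the cutoff $(PW_{\hat{x},\lambda}+\omega)_+$ in \eqref{eq3.1} causes no harm: since $PW_{\hat{x},\lambda}$ dominates $\omega$ in the relevant regions (using $\|\omega\|_*=o(1)$ and the pointwise form of the $\|\cdot\|_*$ norm), the positive part agrees with $PW_{\hat{x},\lambda}+\omega$ wherever the nonlinearity is nontrivial, and the standard $C^{1}$-difference estimates for $t\mapsto t_+^{2^\sharp-1}$ apply globally.
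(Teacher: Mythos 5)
Your proposal is correct and follows essentially the same route as the paper: rewrite \eqref{eq3.1} as the fixed-point equation $\omega=(I-T)^{-1}[(-\Delta)^{-1}\mathbf{P}(N(\omega))]+(I-T)^{-1}[(-\Delta)^{-1}\mathbf{P}l]$ and apply the contraction mapping theorem on the ball $\{\omega\in\mathbf{E}:\|\omega\|_*\leq M\lambda^{-(1+\theta)}\}$ using Lemmas \ref{lm3.2} and \ref{lm3.3}. The only point to tighten is that the $(\hat{x},\lambda)$-uniform bound on $(I-T)^{-1}$ should be quoted from Lemma \ref{lm2.6} (the a priori estimate $\|\phi\|_*\leq C\|f\|_{**}$ with $C$ independent of $(\hat{x},\lambda)$) rather than from the open mapping theorem, which would only give a constant depending on the parameters.
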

	\begin{proof}
		In view of Proposition \ref{pro2.2}, \eqref{eq3.1} is equivalent to
		\begin{equation}\label{eq3.2}
			\omega=B \omega:=(I-T)^{-1}\left[(-\Delta)^{-1} \mathbf{P}(N(\omega))\right]+(I-T)^{-1}\left[(-\Delta)^{-1} \mathbf{P} l\right], \ \omega \in \mathbf{E},
		\end{equation}
		where
		\begin{equation*}\label{eq3.3}
			l=\frac{M(x) (PW_{\hat{x},\lambda})^{2^{\sharp}-1}-W_{\hat{x},\lambda}^{2^{\sharp}-1}}{|y|}
		\end{equation*}
		and
		\begin{equation*}\label{eq3.4}
			N\left(\omega\right)=\frac{M(x)\Bigl[\left(PW_{\hat{x}, \lambda}+\omega\right)_{+}^{2^{\sharp}-1}-(PW_{\hat{x},\lambda})^{2^{\sharp}-1}-\left(2^{\sharp}-1\right) (PW_{\hat{x},\lambda})^{2^{\sharp}-2} \omega\Bigr]}{|y|}.
		\end{equation*}
		By Lemmas \ref{lm2.1} and \ref{lm2.6}, we have
		$$
		\|B \omega\|_{*} \leq C\|l\|_{**}+C\|N(\omega)\|_{**}, \ \text{for any $\omega \in \mathbf{E}$} 
		$$
		and
		$$
		\left\|B\omega_1-B\omega_2\right\|_{*} \leq C\left\|N\left(\omega_1\right)-N\left(\omega_2\right)\right\|_{**}, \ \text{for any $\omega_1,\omega_2 \in \mathbf{E}$.}
		$$
		By Lemmas \ref{lm3.2} and \ref{lm3.3}, we can prove that $B$ is a contraction map from $\{\omega \in \mathbf{E}:\|\omega\|_{*} \leq \frac{M}{\lambda^{1+\theta}}\}$ to itself, where $M>0$ is a sufficiently large constant. So the contraction mapping theorem implies that for sufficiently large $L>0$, \eqref{eq3.2} has a solution $\omega \in \mathbf{E}$, satisfying
		$$
		\|\omega\|_{*} \leq C\|l\|_{**}.
		$$
		Using Lemma \ref{lm3.3}, we obtain the estimate for $\|\omega\|_{*}$.
	\end{proof}

	Now we are in a position to prove Theorem \ref{th1.1}.
	\begin{proof}[\textbf{Proof of Theorem \ref{th1.1}.}]
		We need to show the existence of $(\hat{x},\lambda)=\left(\hat{x}_{0,L}, \lambda_{L}\right)$, such that
		\begin{equation}\label{eq3.16}
			\begin{aligned}
				-\int_{\Omega} \Delta\left(P W_{\hat{x},\lambda}+\omega\right) \partial_{i}\left(P W_{\hat{x},\lambda}\right) 
				&-\int_{\Omega} \frac{M(\xi)\left(P W_{\hat{x},\lambda}+\omega\right)_{+}^{2^{\sharp}-1} \partial_{i}\left(P W_{\hat{x},\lambda}\right)}{\left|\xi^1\right|}=0
			\end{aligned}
		\end{equation}
		for $i=1, \cdots, N+1$. 
		In fact, we have
		\begin{equation}\label{eq3.17}
			\begin{aligned}
				&\quad -\int_{\Omega} \Delta\left(P W_{\hat{x},\lambda}+\omega\right) \partial_{i}\left(P W_{\hat{x},\lambda}\right)-\int_{\Omega} \frac{M(\xi)\left(P W_{\hat{x},\lambda}+\omega\right)_{+}^{2^{\sharp}-1} \partial_{i}\left(P W_{\hat{x},\lambda}\right)}{\left|\xi^1\right|} \\
				& = -\int_{\Omega} \Delta P W_{\hat{x},\lambda}\partial_{i}\left(P W_{\hat{x},\lambda}\right) -\int_{\Omega} \Delta \omega \partial_{i}\left(P W_{\hat{x},\lambda}\right)\\
				& \quad -\int_{\Omega} \frac{M(\xi)\Bigl(\left(P W_{\hat{x},\lambda}+\omega\right)_{+}^{2^{\sharp}-1}-(PW_{\hat{x},\lambda})^{2^{\sharp}-1}-(2^{\sharp}-1)(PW_{\hat{x},\lambda})^{2^{\sharp}-2}\omega\Bigr) \partial_{i}\left(P W_{\hat{x},\lambda}\right)}{\left|\xi^1\right|} \\
				& \quad -\int_{\Omega}\frac{M(\xi)\left(P W_{\hat{x},\lambda}\right)^{2^{\sharp}-1} \partial_{i}\left(P W_{\hat{x},\lambda}\right)}{\left|\xi^1\right|} - (2^{\sharp}-1)\int_{\Omega}\frac{M(\xi)\left(P W_{\hat{x},\lambda}\right)^{2^{\sharp}-2} \partial_{i}\left(P W_{\hat{x},\lambda}\right)\omega}{\left|\xi^1\right|}.
			\end{aligned}
		\end{equation}
		From $\omega \in \mathbf{E}$, we obtain
		\begin{equation}\label{eq3.18}
			-\int_{\Omega} \Delta \omega \partial_{i}\left(P W_{\hat{x},\lambda}\right)=-\int_{\Omega} \Delta\left(\partial_{i}\left(P W_{\hat{x},\lambda}\right)\right) \omega=\left(2^{\sharp}-1\right) \int_{\Omega} \frac{\omega W_{\hat{x},\lambda}^{2^{\sharp}-2} Z_{i}}{\left|\xi^1\right|}=0.
		\end{equation}
		Noting that for any $\gamma>1$, we have $(1+t)_{+}^{\gamma}-1-\gamma t=O\left(t^2\right)$ for all $t \in \mathbb{R}$ if $\gamma \leq 2$. So, we can deduce that when $N \geq 5$,
		\begin{equation}\label{eq3.19}
			\begin{aligned}
				& \quad \Bigl|\int_{\Omega} \frac{M(\xi)\Bigl(\left(P W_{\hat{x},\lambda}+\omega\right)_{+}^{2^{\sharp}-1}-(PW_{\hat{x},\lambda})^{2^{\sharp}-1}-(2^{\sharp}-1)(PW_{\hat{x},\lambda})^{2^{\sharp}-2}\omega\Bigr) \partial_{i}\left(P W_{\hat{x},\lambda}\right)}{\left|\xi^1\right|}\Bigr|\\
				& \leq C\int_{\Omega} \frac{(PW_{\hat{x},\lambda})^{2^{\sharp}-3}|\omega|^2|\partial_{i}(P W_{\hat{x},\lambda})|}{\left|\xi^1\right|} \leq C\lambda^{\alpha(i)}\int_{\Omega} \frac{(PW_{\hat{x},\lambda})^{2^{\sharp}-2}|\omega|^2}{\left|\xi^1\right|}.
			\end{aligned}
		\end{equation}
		On the one hand, similar to the proof of \eqref{eq2.25}, we have
		\begin{equation}\label{eq3.20}
			\begin{aligned}
				& \quad \int_{B_1(\hat{x})} \frac{(PW_{\hat{x},\lambda})^{2^{\sharp}-2}|\omega|^2}{\left|\xi^1\right|} \\
				& \leq C\|\omega\|_{*}^2 \int_{B_1(\hat{x})} \frac{(PW_{\hat{x},\lambda})^{2^{\sharp}-2}}{\left|\xi^1\right|}\Bigl(\frac{(1+\lambda\left|\xi^1\right|+\lambda|\xi^2-\hat{z}|)^\tau}{\lambda^\tau} \sum_{j=0}^{\infty} \frac{\lambda^{\frac{N-2}{2}}}{\left(1+\lambda\left|\xi^1\right|+\lambda|\xi^2-\hat{z}_L^j|\right)^{\frac{N-2}{2}+\tau}}\Bigr)^2 \\
				& \leq C\|\omega\|_{*}^2 \lambda^{N-1} \\
				&\quad \int_{B_1(\hat{x})} \frac{1}{\left|\xi^1\right|(1+\lambda\left|\xi^1\right|+\lambda|\xi^2-\hat{z}|)^2}\Bigl(\frac{(1+\lambda\left|\xi^1\right|+\lambda|\xi^2-\hat{z}|)^\tau}{\lambda^\tau} \frac{1}{(1+\lambda\left|\xi^1\right|+\lambda|\xi^2-\hat{z}|)^{\frac{N-2}{2}+\tau}}\Bigr)^2 \\
				& \leq C\left(\lambda^{-\tau}\|\omega\|_{*}\right)^2.
			\end{aligned}
		\end{equation}
		On the other hand, similar to the proof of \eqref{eqs2.29}, it follows that
		\begin{align}\label{eq3.21} 
			& \quad \int_{\Omega \backslash B_1(\hat{x})} \frac{(PW_{\hat{x},\lambda})^{2^{\sharp}-2}|\omega|^2}{\left|\xi^1\right|} \nonumber\\
			&\leq C\|\omega\|_{*}^2 \int_{\Omega \backslash B_1(\hat{x})} \frac{(PW_{\hat{x},\lambda})^{2^{\sharp}-2}}{\left|\xi^1\right|}\Bigl[\sum_{j=0}^{\infty} \frac{\lambda^{\frac{N-2}{2}}}{\left(1+\lambda\left|\xi^1\right|+\lambda\left|\xi^2-\hat{z}_L^{j}\right|\right)^{\frac{N-2}{2}+\tau}}\Bigr]^2 \nonumber\\
			& \leq C\|\omega\|_{*}^2 \int_{\Omega \backslash B_1(\hat{x})}\Bigl[\sum_{j=0}^{\infty} \frac{\lambda^{\frac{N-2}{2}}}{\left(1+\lambda\left|\xi^1\right|+\lambda\left|\xi^2-\hat{z}_L^{j}\right|\right)^{\frac{N-2}{2}+\tau}}\Bigr]^{2^{\sharp}} \\
			& \leq C\|\omega\|_{*}^2 \lambda^{N-1} \int_{\Omega \backslash B_1(\hat{x})}\Bigl(\frac{1}{(1+\lambda\left|\xi^1\right|+\lambda\left|\xi^2-\hat{z}\right|)^{N-2-\theta}} \nonumber\\
			&\quad +\frac{1}{(1+\lambda\left|\xi^1\right|+\lambda\left|\xi^2-\hat{z}\right|)^{\frac{N-2}{2}+\theta}} \frac{1}{(\lambda L)^{\frac{N-2}{2}-2 \theta}}\Bigr)^{2^{\sharp}} \nonumber\\
			& \leq C\|\omega\|_{*}^2\Bigl(\frac{1}{\lambda^{N-1-2^{\sharp} \theta}}+\frac{1}{(\lambda L)^{N-1-2^{\sharp}(2 \theta)}}\Bigr)\leq C \left(\lambda^{-\tau}\|\omega\|_*\right)^2. \nonumber
		\end{align}
		Combining \eqref{eq3.19}-\eqref{eq3.21} and Proposition \ref{pro3.1}, we obtain
		\begin{equation}\label{eq3.22}
			\begin{aligned}
				& \quad \Bigl|\int_{\Omega} \frac{M(\xi)\Bigl(\left(P W_{\hat{x},\lambda}+\omega\right)_{+}^{2^{\sharp}-1}-(PW_{\hat{x},\lambda})^{2^{\sharp}-1}-(2^{\sharp}-1)(PW_{\hat{x},\lambda})^{2^{\sharp}-2}\omega\Bigr) \partial_{i}\left(P W_{\hat{x},\lambda}\right)}{\left|\xi^1\right|}\Bigr|\\
				& \leq C\lambda^{\alpha(i)} \left(\lambda^{-\tau}\|\omega\|_*\right)^2 \leq \frac{C\lambda^{\alpha(i)}}{\lambda^N}.
			\end{aligned}
		\end{equation}
		From \eqref{eq2.18}, we have
		\begin{equation}\label{eq3.23}
			\Bigl|\int_{\Omega} \frac{M(\xi)\left(P W_{\hat{x},\lambda}\right)^{2^{\sharp}-2} \omega\partial_{i}\left(P W_{\hat{x},\lambda}\right)}{\left|\xi^1\right|}\Bigr| \leq \frac{C \lambda^{\alpha(i)}}{\lambda^{\beta}}\|\omega\|_{*}\leq \frac{C \lambda^{\alpha(i)}}{\lambda^{\beta+1+\theta}}. 
		\end{equation}
		
		Putting \eqref{eq3.18}, \eqref{eq3.22} and \eqref{eq3.23} into \eqref{eq3.17}, we obtain
		\begin{equation}\label{eq3.24}
			\begin{aligned}
				&\quad -\int_{\Omega} \Delta\left(P W_{\hat{x},\lambda}+\omega\right) \partial_{i}\left(P W_{\hat{x},\lambda}\right)-\int_{\Omega} \frac{M(\xi)\left(P W_{\hat{x},\lambda}+\omega\right)_{+}^{2^{\sharp}-1} \partial_{i}\left(P W_{\hat{x},\lambda}\right)}{\left|\xi^1\right|} \\
				& = -\int_{\Omega} \Delta (P W_{\hat{x},\lambda})\partial_{i}\left(P W_{\hat{x},\lambda}\right) -\int_{\Omega}\frac{M(\xi)\left(P W_{\hat{x},\lambda}\right)^{2^{\sharp}-1} \partial_{i}\left(P W_{\hat{x},\lambda}\right)}{\left|\xi^1\right|}+\lambda^{\alpha(i)}O\Bigl(\frac{1}{\lambda^N}+\frac{1}{\lambda^{\beta+1+\theta}}\Bigr).
			\end{aligned}
		\end{equation}
		
		Combining Propositions \ref{proA.3} and \ref{proA.4} with \eqref{eq3.24}, we see that \eqref{eq3.16} is equivalent to
		\begin{equation}\label{eq3.25}
			\begin{aligned}
				\frac{B_{i} \lambda \hat{z}_{i}}{\lambda^{\beta_{k+i}-1}}=O\Bigl(\frac{1}{\lambda^{\beta-1} L}+\frac{1}{\lambda^{\beta_{M}-2+\kappa}}+\frac{\left(\lambda \hat{z}_{i}\right)^2}{\lambda^{\beta_{k+i}-1}}+\frac{1}{\lambda^{N-1}}+\frac{1}{\lambda^{\beta+\theta}}\Bigr)
			\end{aligned}
		\end{equation}
		and 
		\begin{equation}\label{eq3.26}
			\begin{aligned}
				\frac{\left(2^{\sharp}-1\right) BD}{\lambda^{N-1} L^{N-2}} \sum_{j=1}^{\infty} \Gamma\left(P^{j}, 0\right)+\frac{F}{\lambda^{\beta+1}} \sum_{i \in J} a_{i} = o\Bigl(\frac{1}{\lambda^{\beta+1}}\Bigr),
			\end{aligned}
		\end{equation}
		where $B,D,F$ are positive constants and $B_{i}$ is a non-zero constant, $i=1,2,\cdots,N$. 
		
		Since $L \sim \lambda^{\frac{\beta}{N-2}-1}$, $\beta \leq \beta_{k+i} \leq \beta_{M}<\beta\left(1+\frac{1}{N-2}\right)$ and \eqref{aeq2.4}, we see that
		$$
		\frac{B_{i} \lambda \hat{z}_{i}}{\lambda^{\beta_{k+i}-1}} =\frac{1}{\lambda^{\beta-1} L} =\frac{\left(\lambda \hat{z}_{i}\right)^2}{\lambda^{\beta_{k+i}-1}} =O\Bigl(\frac{1}{\lambda^{\beta_{k+i}-1+\theta}}\Bigr).
		$$
		Clearly, \eqref{eq3.25} and \eqref{eq3.26} have a solution $\left(\hat{x}_{0,L}, \lambda_{L}\right)$, satisfying
		$$
		\left|\hat{x}_{0,L}\right| \leq \frac{C}{\lambda^{1+\theta}}, \ \lambda_{L}=L^{\frac{N-2}{\beta-N+2}}\left(C_{0}+o(1)\right),
		$$
		where
		$$
		C_{0}=\Bigl[\frac{-F\sum_{i \in J} a_{i} }{\left(2^{\sharp}-1\right) BD \sum_{j=1}^{\infty} \Gamma\left(P^{j}, 0\right)}\Bigr]^{\frac{1}{\beta-(N-2)}}>0.
		$$
		So we have proved that $u=P W_{\hat{x}_{0,L},\lambda_{L}}+w_{L}$ satisfies $-\Delta u(x)=M(x) \frac{u_{+}^{\frac{N}{N-2}}}{|y|}$ in $\Omega$. It follows from Lemma \ref{lmA.1} and Proposition \ref{pro3.1} that $u_{+} \not \equiv 0$.
		Notice that 
		\begin{equation*}
			u(x)=\int_{\Omega} G(x, \xi) M(\xi)\frac{u_{+}^{2^{\sharp}-1}(\xi)}{|\xi^1|} \mathrm{d}\xi >0.
		\end{equation*}
		Therefore, $u>0$ in $\Omega$. The proof of the Theorem \ref{th1.1} is finished.
	\end{proof}
	
	\appendix
	\section{Some Essential Estimates} \label{secA}
	In this section, we give some essential estimates which are independently interesting. We believe that they are useful to other related problems involving polyharmonic equations.
	
	\begin{lemma}\label{lmA.1}
		It holds that
		$$
		0\le P W_{\hat{x},\lambda}(x) \le \sum_{j=0}^{\infty} W_{\hat{x}_{L}^{j}, \lambda}(x).
		$$
	\end{lemma}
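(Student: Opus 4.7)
The nonnegativity $PW_{\hat x,\lambda}\ge 0$ is immediate, since the Green representation
\[
PW_{\hat x,\lambda}(x)=\int_\Omega G(x,\xi)\,\frac{W_{\hat x,\lambda}^{N/(N-2)}(\xi)}{|\xi^1|}\,d\xi
\]
has a pointwise nonnegative integrand ($\Gamma\ge 0$, so $G\ge 0$). For the upper bound, the plan is to exploit the key geometric fact that the translates $\{\Omega+P_L^j\}_{j\ge 0}$ tile $\mathbb R^N$ (up to sets of measure zero), because $\Omega=\{|z^1_i|\le L/2\}\times \mathbb R^{\text{rest}}$ and the $P_L^j$ are exactly the $L$-scaled integer lattice in the $z^1$-directions.

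Concretely, I would unfold the series expansion of $G$, apply Tonelli (all terms nonnegative), and substitute $\eta=\xi+P_L^j$ in each summand, noting that $|\xi^1|=|\eta^1|$ since $P_L^j$ has zero $y$-component:
\[
PW_{\hat x,\lambda}(x)=\sum_{j=0}^\infty\int_{\Omega+P_L^j}\Gamma(x,\eta)\,\frac{W_{\hat x,\lambda}^{N/(N-2)}(\eta-P_L^j)}{|\eta^1|}\,d\eta.
\]
Next I would identify the translated bubble: since $P_L^j$ only shifts the $z$-coordinate by $L\tilde P^j$ and the profile depends only on $|y|$ and $|z-\hat z|$, a direct calculation gives $W_{\hat x,\lambda}(\eta-P_L^j)=W_{(0,\hat z+L\tilde P^j),\lambda}(\eta)$. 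Because $Q_{\bar k}$ is closed under negation, the map $j\mapsto j^*$ defined by $\tilde P^{j^*}=-\tilde P^j$ is a bijection of the index set, so $W_{\hat x,\lambda}(\eta-P_L^j)=W_{\hat x_L^{j^*},\lambda}(\eta)$, i.e.\ each translated copy is exactly one of the bubbles appearing on the right-hand side of the claim.

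Using the tiling, the expression above collapses to a single integral over $\mathbb R^N$ whose integrand, at each $\eta$, contains just the one bubble $W_{\hat x_L^{j^*(\eta)},\lambda}^{N/(N-2)}(\eta)$ indexed by the tile $\Omega+P_L^{j(\eta)}$ containing $\eta$. On the other hand, since each $W_{\hat x_L^j,\lambda}$ satisfies $-\Delta W_{\hat x_L^j,\lambda}=W_{\hat x_L^j,\lambda}^{N/(N-2)}/|y|$ on all of $\mathbb R^N$, Tonelli and Newton's representation give
\[
\sum_{j=0}^\infty W_{\hat x_L^j,\lambda}(x)=\int_{\mathbb R^N}\Gamma(x,\eta)\,\frac{\sum_{j=0}^\infty W_{\hat x_L^j,\lambda}^{N/(N-2)}(\eta)}{|\eta^1|}\,d\eta.
\]
Comparing the two integrals term-by-term and using the nonnegativity of every summand yields $PW_{\hat x,\lambda}(x)\le\sum_j W_{\hat x_L^j,\lambda}(x)$, finishing the proof.

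\textbf{Main obstacle.} There is no serious analytic difficulty; the only thing one has to be slightly careful with is the bookkeeping of the lattice bijection $j\mapsto j^*$ induced by negation in $Q_{\bar k}$, ensuring that $\{W_{\hat x,\lambda}(\cdot-P_L^j)\}_j$ and $\{W_{\hat x_L^j,\lambda}\}_j$ coincide as families (not just elementwise), so that the tile-by-tile bound really sits inside the full positive sum. Convergence of the series is guaranteed by the polynomial decay of $\Gamma$ and $W_{\hat x_L^j,\lambda}$ together with $\bar k<(N-2)/2$, which lets Tonelli be invoked freely.
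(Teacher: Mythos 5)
Your proposal is correct and follows essentially the same route as the paper: unfold $G=\sum_j\Gamma(\cdot,\cdot+P_L^j)$, use nonnegativity of the integrand, and identify each resulting whole-space Newtonian potential with a translated bubble $W_{\hat x_L^{j},\lambda}$ (the paper simply bounds each $j$-th integral by extending its domain from $\Omega$ to $\mathbb{R}^N$, which makes your tiling/change-of-variables step unnecessary, but both are valid). Your explicit handling of the negation bijection $j\mapsto j^*$ is a point the paper glosses over and is harmless thanks to the symmetry of the lattice.
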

	\begin{proof}
		We have
		\begin{equation*}
			\begin{aligned}
				P W_{\hat{x},\lambda}(x) & =\int_{\Omega} G(x, \xi) \frac{W_{\hat{x},\lambda}^{2^{\sharp}-1}(\xi)}{|\xi^1|} \mathrm{d}\xi =\sum_{j=0}^{\infty} \int_{\Omega} \Gamma\left(x, \xi+P_{L}^{j}\right) \frac{W_{\hat{x},\lambda}^{2^{\sharp}-1}(\xi)}{|\xi^1|} \mathrm{d}\xi \\ 
				& \leq \sum_{j=0}^{\infty} \int_{\mathbb{R}^{N}} \Gamma\left(x, \xi+P_{L}^{j}\right) \frac{W_{\hat{x},\lambda}^{2^{\sharp}-1}(\xi)}{|\xi^1|} \mathrm{d}\xi =\sum_{j=0}^{\infty} W_{\hat{x},\lambda}\left(x+P_{L}^{j}\right)=\sum_{j=0}^{\infty} W_{\hat{x}_{L}^{j}, \lambda}(x).				
			\end{aligned}
		\end{equation*}
	\end{proof}
	
	Let $\varphi_{\hat{x},\lambda}=W_{\hat{x},\lambda}-P W_{\hat{x},\lambda}$. Then $\varphi_{\hat{x},\lambda}$ has the following expansion.
	\begin{lemma}\label{lmA.2}
		For $x \in B_1(0)$, it holds that
		\begin{equation}\label{eqA.1}
			\varphi_{\hat{x},\lambda}(x)=-\frac{B}{\lambda^{\frac{N-2}{2}}} \sum_{j=1}^{\infty} \Gamma\left(x, \hat{x}+P_{L}^{j}\right)+O\Bigl(\frac{1}{L^{N-2} \lambda^{\frac{N}{2}}}\Bigr),
		\end{equation}
		\begin{equation}\label{eqA.2}
			\frac{\partial \varphi_{\hat{x},\lambda}}{\partial \hat{z}_{i}}(x) =-\frac{B}{\lambda^{\frac{N-2}{2}}} \sum_{j=1}^{\infty} \frac{\partial \Gamma\left(x, \hat{x}+P_{L}^{j}\right)}{\partial \hat{z}_{i}}+O\Bigl(\frac{1}{L^{N-2} \lambda^{\frac{N}{2}}}\Bigr)
		\end{equation}
		and
		\begin{equation}\label{eqA.3}
			\frac{\partial \varphi_{\hat{x},\lambda}}{\partial \lambda}(x)=\frac{B(N-2)}{2 \lambda^{\frac{N}{2}}} \sum_{j=1}^{\infty} \Gamma\left(x, \hat{x}+P_{L}^{j}\right)+O\Bigl(\frac{1}{L^{N-2} \lambda^{\frac{N+2}{2}}}\Bigr),
		\end{equation}
	\end{lemma}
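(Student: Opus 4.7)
The plan is to exploit the integral representations
$$PW_{\hat{x},\lambda}(x)=\sum_{j=0}^{\infty}\int_{\Omega}\Gamma(x,\xi+P_L^j)\frac{W_{\hat{x},\lambda}^{2^\sharp-1}(\xi)}{|\xi^1|}\,d\xi,\qquad W_{\hat{x},\lambda}(x)=\int_{\mathbb{R}^N}\Gamma(x,\xi)\frac{W_{\hat{x},\lambda}^{2^\sharp-1}(\xi)}{|\xi^1|}\,d\xi,$$
whose difference yields the decomposition
$$\varphi_{\hat{x},\lambda}(x)=\int_{\mathbb{R}^N\setminus\Omega}\Gamma(x,\xi)\frac{W_{\hat{x},\lambda}^{2^\sharp-1}}{|\xi^1|}\,d\xi-\sum_{j=1}^{\infty}\int_{\Omega}\Gamma(x,\xi+P_L^j)\frac{W_{\hat{x},\lambda}^{2^\sharp-1}}{|\xi^1|}\,d\xi.$$

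For the tail piece, the condition $\xi\in\mathbb{R}^N\setminus\Omega$ forces $|\xi^2|\gtrsim L$, hence $|x-\xi|\sim|\xi|$; combined with the far-field bound $W_{\hat{x},\lambda}^{2^\sharp-1}(\xi)\lesssim\lambda^{-N/2}|\xi|^{-N}$, a direct polar-coordinate computation (integrating $\xi^1\in\mathbb{R}^k$ first, then $|\xi^2|\ge L/2$) yields $O(1/(L^{N-1}\lambda^{N/2}))$, which is absorbed into the remainder. For each $j\ge 1$, since $|P_L^j|=L|\tilde P^j|$ dwarfs $|x|+|\hat x|+|\xi-\hat x|$ on the effective support of the bubble, I Taylor expand
$$\Gamma(x,\xi+P_L^j)=\Gamma(x,\hat x+P_L^j)+O\Bigl(\frac{|\xi-\hat x|}{(L|\tilde P^j|)^{N-1}}\Bigr).$$
The zeroth-order piece gives $\Gamma(x,\hat x+P_L^j)\int_{\Omega}W_{\hat x,\lambda}^{2^\sharp-1}/|\xi^1|\,d\xi$, and the rescaling $\eta=\lambda(\xi-\hat x)$ produces
$$\int_{\mathbb{R}^N}\frac{W_{\hat x,\lambda}^{2^\sharp-1}}{|\xi^1|}\,d\xi=\frac{B}{\lambda^{(N-2)/2}},\qquad B:=\int_{\mathbb{R}^N}\frac{W_{0,1}^{2^\sharp-1}(\eta)}{|\eta^1|}\,d\eta,$$
with a tail contribution of size $O(1/(L\lambda^{N/2}))$, while the first-order remainder is controlled by $|\tilde P^j|^{-(N-1)}/(L^{N-1}\lambda^{N/2})$ using the scaling estimate $\int|\xi-\hat x|W_{\hat x,\lambda}^{2^\sharp-1}/|\xi^1|\,d\xi\lesssim\lambda^{-N/2}$. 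Summing over $j$ collects everything into $O(1/(L^{N-2}\lambda^{N/2}))$, which gives \eqref{eqA.1}.

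For \eqref{eqA.2} and \eqref{eqA.3} I differentiate the Green representation under the integral sign, so that $\varphi_{\hat x,\lambda}$ is replaced by $\partial_{\hat z_i}\varphi_{\hat x,\lambda}$ or $\partial_\lambda\varphi_{\hat x,\lambda}$ with source $(2^\sharp-1)W_{\hat x,\lambda}^{2^\sharp-2}\partial W_{\hat x,\lambda}/|\xi^1|$; since $\partial W_{\hat x,\lambda}$ has the same concentration scale as $W_{\hat x,\lambda}$, the identical scheme applies. For $\partial_{\hat z_i}$, translation invariance in $z$ makes the charge $\int W_{\hat x,\lambda}^{2^\sharp-1}/|\xi^1|\,d\xi$ independent of $\hat z$, so the derivative acts only on $\Gamma(x,\hat x+P_L^j)$, producing \eqref{eqA.2}. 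For $\partial_\lambda$, the derivative hits the prefactor $\lambda^{-(N-2)/2}$ and generates $B(N-2)/(2\lambda^{N/2})$, while every error term picks up an extra $\lambda^{-1}$, matching \eqref{eqA.3}.

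The main obstacle is the sharp book-keeping of all error sources so that their total stays within $1/(L^{N-2}\lambda^{N/2})$: one must quantify the Taylor remainder in $\Gamma$ via the separation $|P_L^j|\ge L|\tilde P^j|$, handle the tails outside $\Omega$ by the bubble's far-field decay, and ensure the convergence of the $j$-sums $\sum_{j\ge 1}|\tilde P^j|^{-(N-2)}$ and $\sum_{j\ge 1}|\tilde P^j|^{-(N-1)}$ on the $\bar k$-dimensional lattice $Q_{\bar k}$. This convergence is precisely where the assumption $\bar k<(N-2)/2<N-2$ enters.
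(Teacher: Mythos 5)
Your route is essentially the paper's: the same Green representation of $PW_{\hat x,\lambda}$, the same reorganization of $\varphi_{\hat x,\lambda}$ into the $\mathbb{R}^N\setminus\Omega$ tail plus the image terms $j\ge 1$, extraction of the charge $B\lambda^{-\frac{N-2}{2}}$ in front of $\Gamma(x,\hat x+P_L^j)$, and derivatives taken on the representation (with the source $(2^\sharp-1)W_{\hat x,\lambda}^{2^\sharp-2}\partial W_{\hat x,\lambda}/|\xi^1|$) rather than on the expansion; the signs and the extra $\lambda^{-1}$ in \eqref{eqA.3} come out as in the paper. The only genuine difference is how each image term is estimated: you Taylor expand $\Gamma$ with a first-moment bound, whereas the paper splits into $B_\delta(\hat x)$ (rescale and extract $B\Gamma$) and $\Omega\setminus B_\delta(\hat x)$ (crude bound $\Gamma\le C(L|\tilde P^j|)^{-(N-2)}$ plus decay of $W^{2^\sharp-1}$).

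Two steps of your variant need repair, though neither changes the outcome. First, the claimed moment bound $\int|\xi-\hat x|\,W_{\hat x,\lambda}^{2^\sharp-1}/|\xi^1|\,d\xi\lesssim\lambda^{-N/2}$ is false as stated: on each dyadic shell $|\xi-\hat x|\sim\rho\ge\lambda^{-1}$ the integrand contributes $\approx\lambda^{-N/2}$, so the integral over $\mathbb{R}^N$ diverges logarithmically; restricted to $\Omega$ (or to $B_\delta(\hat x)$) it is $\lesssim\lambda^{-N/2}\log(\lambda L)$. Since your remainder also carries $(L|\tilde P^j|)^{-(N-1)}$ and $\sum_j|\tilde P^j|^{-(N-1)}<\infty$, the extra logarithm is still absorbed into $O\bigl(L^{-(N-2)}\lambda^{-N/2}\bigr)$, but the estimate must be stated with the logarithm or with the domain truncation. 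Second, the expansion $\Gamma(x,\xi+P_L^j)=\Gamma(x,\hat x+P_L^j)+O\bigl(|\xi-\hat x|\,(L|\tilde P^j|)^{-(N-1)}\bigr)$ cannot be justified merely ``on the effective support of the bubble'': to use it over the whole domain of integration you need the uniform separation $|x-\zeta-P_L^j|\ge c\,L|\tilde P^j|$ for every $\zeta$ on the segment from $\hat x$ to $\xi$, which does hold for all $\xi\in\Omega$ because $\Omega$ is convex, the constrained $z$-coordinates of $\zeta$ stay in $[-L/2,L/2]$, and the corresponding coordinates of $P_L^j$ are multiples of $L$ with at least one nonzero; if instead you expand only near the bubble, then $\Omega\setminus B_\delta(\hat x)$ must be treated separately by the crude bound on $\Gamma$ together with the decay of $W_{\hat x,\lambda}^{2^\sharp-1}$, which is exactly the paper's argument. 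With these points made precise, your proof of \eqref{eqA.1}--\eqref{eqA.3} is sound and matches the paper's.
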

	where $B=\int_{\mathbb{R}^{N}} \frac{W_{0,1}^{2^{\sharp}-1}(\xi)}{|\xi^1|}\mathrm{d}\xi$.
	\begin{proof}
		Recall the definition $\hat{x}=\left(0,\hat{z}\right)$ and $\xi=\left(\xi^1,\xi^2\right)$.
		We have
		\begin{equation}\label{eqA.4}
			P W_{\hat{x},\lambda}(x)=\sum_{j=0}^{\infty} \int_{\Omega} \Gamma\left(x, \xi+P_{L}^{j}\right) \frac{W_{\hat{x},\lambda}^{2^{\sharp}-1}(\xi)}{|\xi^1|} \mathrm{d}\xi.
		\end{equation}								
		For $j=0$, it holds that			
		\begin{align*}				
			\int_{\Omega} \Gamma(x, \xi) \frac{W_{\hat{x},\lambda}^{2^{\sharp}-1}}{|\xi^1|} & =\int_{\mathbb{R}^{N}} \Gamma(x, \xi) \frac{W_{\hat{x},\lambda}^{2^{\sharp}-1}}{|\xi^1|}-\int_{\mathbb{R}^{N} \backslash \Omega} \Gamma(x, \xi) \frac{W_{\hat{x},\lambda}^{2^{\sharp}-1}}{|\xi^1|} \\
			& =W_{\hat{x},\lambda}(x)+O\Bigl(\int_{\mathbb{R}^{N} \backslash \Omega} \frac{1}{|x-\xi|^{N-2}} \frac{1}{|\xi^1| \left(|\xi^1|+|\xi^2-\hat{z}|\right)^{N} \lambda^{\frac{N}{2}}} \mathrm{d}\xi\Bigr)\\
			& =W_{\hat{x},\lambda}(x)+O\Bigl(\int_{\mathbb{R}^{N} \backslash \Omega} \frac{1}{|\xi^1| \left(|\xi^1|+|\xi^2-\hat{z}|\right)^{2N-2}} \frac{1}{\lambda^{\frac{N}{2}}} \mathrm{d}\xi\Bigr)\\
			& =W_{\hat{x},\lambda}(x)+O\Bigl(\frac{1}{L^{N-1}\lambda^{\frac{N}{2}}}\Bigr).
		\end{align*} 
		For $j \neq 0$, we have
		\begin{align*}
			& \quad \int_{\Omega} \Gamma\left(x, \xi+P_{L}^{j}\right) \frac{W_{\hat{x},\lambda}^{2^{\sharp}-1}(\xi)}{|\xi^1|} \\
			& =\int_{B_{\delta}(\hat{x})} \Gamma\left(x, \xi+P_{L}^{j}\right) \frac{W_{\hat{x},\lambda}^{2^{\sharp}-1}(\xi)}{|\xi^1|}+O\Bigl(\int_{\Omega \backslash B_{\delta}(\hat{x})} \Gamma\left(x, \xi+P_{L}^{j}\right) \frac{1}{|\xi^1|\left(|\xi^1|+|\xi^2-\hat{z}|\right)^{N}} \frac{1}{\lambda^{\frac{N}{2}}}\Bigr) \\
			& =\frac{1}{\lambda^{\frac{N-2}{2}}} \int_{B_{\delta \lambda}(0)} \Gamma\left(x,\lambda^{-1} \xi+\hat{x}+P_{L}^{j}\right) \frac{W_{0,1}^{2^{\sharp}-1}}{|\xi^1|} +O\Bigl(\frac{1}{|L \tilde{P}_{j}|^{N-2}} \frac{1}{\lambda^{\frac{N}{2}}}\Bigr) \\
			& =\frac{B \Gamma\left(x,\hat{x}+P_{L}^{j}\right)}{\lambda^{\frac{N-2}{2}}}+O\Bigl(\frac{1}{|L \tilde{P}_{j}|^{N-2}} \frac{1}{\lambda^{\frac{N}{2}}}\Bigr).
		\end{align*}
		So we have proved \eqref{eqA.1}. 
		
		Using \eqref{eqA.4}, we have
		\begin{equation*}\label{eqA.5}
			\frac{\partial P W_{\hat{x},\lambda}(x)}{\partial \hat{z}_{i}}=\left(2^{\sharp}-1\right) \sum_{j=0}^{\infty} \int_{\Omega} \Gamma\left(x, \xi+P_L^{j}\right) \frac{W_{\hat{x},\lambda}^{2^{\sharp}-2}}{|\xi^1|} \frac{\partial W_{\hat{x},\lambda}}{\partial \hat{z}_{i}}(\xi)\mathrm{d}\xi.
		\end{equation*}
		For $j=0$, it holds that
		\begin{equation*}\label{eqA.6}
			\begin{aligned}
				& \quad \left(2^{\sharp}-1\right) \int_{\Omega} \Gamma\left(x, \xi\right) \frac{W_{\hat{x},\lambda}^{2^{\sharp}-2}}{|\xi^1|} \frac{\partial W_{\hat{x},\lambda}}{\partial \hat{z}_{i}} \\
				& =\left(2^{\sharp}-1\right) \int_{\mathbb{R}^{N}} \Gamma\left(x, \xi\right) \frac{W_{\hat{x},\lambda}^{2^{\sharp}-2}}{|\xi^1|} \frac{\partial W_{\hat{x},\lambda}}{\partial \hat{z}_{i}} - \left(2^{\sharp}-1\right) \int_{\mathbb{R}^{N} \backslash \Omega} \Gamma\left(x, \xi\right) \frac{W_{\hat{x},\lambda}^{2^{\sharp}-2}}{|\xi^1|} \frac{\partial W_{\hat{x},\lambda}}{\partial \hat{z}_{i}} \\
				& =\int_{\mathbb{R}^{N}} \Gamma\left(x, \xi\right) (-\Delta)\Bigl(\frac{\partial W_{\hat{x},\lambda}}{\partial \hat{z}_{i}}\Bigr) + O\Bigl(\int_{\mathbb{R}^{N} \backslash \Omega} \frac{1}{|x-\xi|^{N-2}} \frac{1}{|\xi^1| \left(|\xi^1|+|\xi^2-\hat{z}|\right)^{N+1}} \frac{1}{\lambda^{\frac{N}{2}}}\Bigr)\\
				& =\frac{\partial W_{\hat{x},\lambda}}{\partial \hat{z}_{i}}(x) + O\Bigl(\frac{1}{L^{{N}}} \frac{1}{\lambda^{\frac{N}{2}}}\Bigr).
			\end{aligned}
		\end{equation*}
		For $j \neq 0$, we have
		\begin{align*}
			& \quad \left(2^{\sharp}-1\right) \int_{\Omega} \Gamma\left(x, \xi+P_L^{j}\right) \frac{W_{\hat{x},\lambda}^{2^{\sharp}-2}}{|\xi^1|} \frac{\partial W_{\hat{x},\lambda}}{\partial \hat{z}_{i}}=-\left(2^{\sharp}-1\right) \int_{\Omega} \Gamma\left(x, \xi+P_L^{j}\right) \frac{W_{\hat{x},\lambda}^{2^{\sharp}-2}}{|\xi^1|} \frac{\partial W_{\hat{x},\lambda}}{\partial \xi^{2}_i} \\
			& =-\left(2^{\sharp}-1\right) \int_{B_{\delta}(\hat{x})} \Gamma\left(x, \xi+P_L^{j}\right) \frac{W_{\hat{x},\lambda}^{2^{\sharp}-2}}{|\xi^1|} \frac{\partial W_{\hat{x},\lambda}}{\partial \xi^{2}_i}+O\Bigl(\int_{\Omega \backslash B_{\delta}(\hat{x})} \Gamma\left(x, \xi+P_L^{j}\right) \frac{W_{\hat{x},\lambda}^{2^{\sharp}-2}}{|\xi^1|} \frac{\partial W_{\hat{x},\lambda}}{\partial \xi^{2}_i}\Bigr) \\
			& = \int_{B_{\delta}(\hat{x})} \frac{\partial \Gamma\left(x, \xi+P_L^{j}\right)}{\partial \xi^{2}_i} \frac{W_{\hat{x},\lambda}^{2^{\sharp}-1}}{|\xi^1|}+O\Bigl(\frac{1}{|L \tilde{P}^{j}|^{N-2}} \frac{1}{\lambda^{\frac{N}{2}}}\Bigr)\\
			& =\frac{1}{\lambda^{\frac{N-2}{2}}} \int_{B_{\delta \lambda}(0)} \frac{\partial \Gamma\left(x, \lambda^{-1} \xi+\hat{x}+P_L^{j}\right)}{\partial \hat{z}_{i}} \frac{W_{0,1}^{2^{\sharp}-1}}{|\xi^1|}+O\Bigl(\frac{1}{\left|L P_{j}\right|^{N-2}} \frac{1}{\lambda^{\frac{N}{2}}}\Bigr) \\
			& =\frac{1}{\lambda^{\frac{N-2}{2}}} \frac{\partial \Gamma\left(x, \hat{x}+P_L^{j}\right)}{\partial \hat{z}_{i}} \int_{B_{\delta \lambda}(0)} \frac{W_{0,1}^{2^{\sharp}-1}}{|\xi^1|}+O\Bigl(\frac{1}{\left|L P_{j}\right|^{N-2}} \frac{1}{\lambda^{\frac{N}{2}}}\Bigr) \\
			& =\frac{B}{\lambda^{\frac{N-2}{2}}} \frac{\partial \Gamma\left(x, \hat{x}+P_L^{j}\right)}{\partial \hat{z}_{i}}+O\Bigl(\frac{1}{\left|L P_{j}\right|^{N-2}} \frac{1}{\lambda^{\frac{N}{2}}}\Bigr).
		\end{align*}
		Thus we have proved \eqref{eqA.2}.
		
		Using \eqref{eqA.4}, we also have
		\begin{equation*}\label{eqA.8}
			\frac{\partial P W_{\hat{x},\lambda}(x)}{\partial \lambda}=\left(2^{\sharp}-1\right) \sum_{j=0}^{\infty} \int_{\Omega} \Gamma\left(x, \xi+P_L^{j}\right) \frac{W_{\hat{x},\lambda}^{2^{\sharp}-2}}{|\xi^1|} \frac{\partial W_{\hat{x},\lambda}}{\partial \lambda}(\xi)\mathrm{d}\xi.
		\end{equation*}
		For $j=0$, it holds that
		\begin{equation*}
			\begin{aligned}
				& \quad \left(2^{\sharp}-1\right) \int_{\Omega} \Gamma\left(x, \xi\right) \frac{W_{\hat{x},\lambda}^{2^{\sharp}-2}}{|\xi^1|} \frac{\partial W_{\hat{x},\lambda}}{\partial \lambda} \\
				& =\left(2^{\sharp}-1\right) \int_{\mathbb{R}^{N}} \Gamma\left(x, \xi\right) \frac{W_{\hat{x},\lambda}^{2^{\sharp}-2}}{|\xi^1|} \frac{\partial W_{\hat{x},\lambda}}{\partial \lambda} - \left(2^{\sharp}-1\right) \int_{\mathbb{R}^{N} \backslash \Omega} \Gamma\left(x, \xi\right) \frac{W_{\hat{x},\lambda}^{2^{\sharp}-2}}{|\xi^1|} \frac{\partial W_{\hat{x},\lambda}}{\partial \lambda} \\
				& =\frac{\partial W_{\hat{x},\lambda}}{\partial \lambda}(x) + \frac{1}{\lambda}O\Bigl(\frac{1}{L^{N-2}} \frac{1}{\lambda^{\frac{N}{2}}}\Bigr).
			\end{aligned}
		\end{equation*}
		For $j \neq 0$, we have
		\begin{equation*}
			\begin{aligned}
				& \quad \left(2^{\sharp}-1\right) \int_{\Omega} \Gamma\left(x, \xi+P_L^{j}\right) \frac{W_{\hat{x},\lambda}^{2^{\sharp}-2}}{|\xi^1|} \frac{\partial W_{\hat{x},\lambda}}{\partial \lambda}=\int_{\Omega} \Gamma\left(x, \xi+P_L^{j}\right) \frac{\partial}{\partial \lambda}\Bigl(\frac{W_{\hat{x},\lambda}^{2^{\sharp}-1}}{|\xi^1|}\Bigr) \\
				& =\int_{B_{\delta}(x)} \Gamma\left(x, \xi+P_L^{j}\right) \frac{\partial}{\partial \lambda}\Bigl(\frac{W_{\hat{x},\lambda}^{2^{\sharp}-1}}{|\xi^1|}\Bigr)+O\Bigl(\int_{\Omega \backslash B_{\delta}(x)} \Gamma\left(x, \xi+P_L^{j}\right) \frac{\partial}{\partial \lambda}\Bigl(\frac{W_{\hat{x},\lambda}^{2^{\sharp}-1}}{|\xi^1|}\Bigr)\Bigr) \\
				& =\frac{\partial}{\partial \lambda}\Bigl(\frac{1}{\lambda^{\frac{N-2}{2}}} \int_{B_{\delta \lambda}(0)} \Gamma\left(x,\lambda^{-1} \xi+\hat{x}+P_{L}^{j}\right) \frac{W_{0,1}^{2^{\sharp}-1}}{|\xi^1|}\Bigr)+\frac{1}{\lambda} O\Bigl(\frac{1}{|L \tilde{P}_{j}|^{N-2}} \frac{1}{\lambda^{\frac{N}{2}}}\Bigr) \\
				& =\frac{\partial}{\partial \lambda}\Bigl(\frac{\Gamma\left(x, \hat{x}+P_{L}^{j}\right)}{\lambda^{\frac{N-2}{2}}}\Bigr) \int_{B_{\delta \lambda}(0)} \frac{W_{0,1}^{2^{\sharp}-1}}{|\xi^1|}+\frac{1}{\lambda} O\Bigl(\frac{1}{|L \tilde{P}_{j}|^{N-2}} \frac{1}{\lambda^{\frac{N}{2}}}\Bigr) \\
				& =-\frac{(N-2)B\Gamma\left(x, \hat{x}+P_{L}^{j}\right)}{2\lambda^{\frac{N}{2}}}+\frac{1}{\lambda} O\Bigl(\frac{1}{|L \tilde{P}_{j}|^{N-2}} \frac{1}{\lambda^{\frac{N}{2}}}\Bigr).
			\end{aligned}
		\end{equation*}
		Therefore, we have proved \eqref{eqA.3}.
	\end{proof}
	
	With Lemma \ref{lmA.2}, we are now prepared to estimate the following quantities, which will be used to determine $x_{0,L}$ and $\lambda_{L}$ for the bubbling solution of \eqref{eq1.1}:
	\begin{equation*}\label{eqA.9}
		-\int_{\Omega} \Delta (PW_{\hat{x},\lambda}) \partial_{i}\left(P W_{\hat{x},\lambda}\right) -\int_{\Omega}\frac{M(\xi)\left(P W_{\hat{x},\lambda}\right)^{2^{\sharp}-1} \partial_{i}\left(P W_{\hat{x},\lambda}\right)}{|\xi^1|}.
	\end{equation*}
	First, we have
	\begin{align*}
		&\quad -\int_{\Omega} \Delta (PW_{\hat{x},\lambda})\partial_{i}\left(P W_{\hat{x},\lambda}\right) -\int_{\Omega}\frac{M(\xi)\left(P W_{\hat{x},\lambda}\right)^{2^{\sharp}-1} \partial_{i}\left(P W_{\hat{x},\lambda}\right)}{|\xi^1|} \\
		& =\int_{\Omega} \frac{W_{\hat{x},\lambda}^{2^{\sharp}-1}}{|\xi^1|} \partial_{i}\left(P W_{\hat{x},\lambda}\right)-\int_{\Omega}\frac{M(\xi)\left(P W_{\hat{x},\lambda}\right)^{2^{\sharp}-1} \partial_{i}\left(P W_{\hat{x},\lambda}\right)}{|\xi^1|}.
	\end{align*}
	On the one hand, using Lemma \ref{lmA.1}, we have
	\begin{align*}
		&\quad \Bigl|\int_{\Omega \backslash B_1(\hat{x})} \frac{W_{\hat{x},\lambda}^{2^{\sharp}-1}}{|\xi^1|} \partial_{i}\left(P W_{\hat{x},\lambda}\right)-\int_{\Omega \backslash B_1(\hat{x})} \frac{M(x)\left(P W_{\hat{x},\lambda}\right)^{2^{\sharp}-1} \partial_{i}\left(P W_{\hat{x},\lambda}\right)}{|\xi^1|}\Bigr| \\
		& \leq C \lambda^{\alpha(i)}\Bigl(\int_{\Omega \backslash B_1(\hat{x})} \frac{W_{\hat{x},\lambda}^{2^{\sharp}-1}}{|\xi^1|} P W_{\hat{x},\lambda}+\int_{\Omega \backslash B_1(\hat{x})} \frac{\left(P W_{\hat{x},\lambda}\right)^{2^{\sharp}}}{|\xi^1|}\Bigr) \leq C \lambda^{\alpha(i)} \int_{\Omega \backslash B_1(\hat{x})} \frac{\Bigl(\sum_{j=0}^{\infty} W_{\hat{x}_L^{j}, \lambda}\Bigr)^{2^{\sharp}}}{|\xi^1|} \\
		& \leq C \lambda^{\alpha(i)+N-1} \\
		&\quad \times \int_{\Omega \backslash B_1(\hat{x})}\Bigl[\frac{1}{|\xi^1|(1+\lambda|\xi^1|+\lambda|\xi^2-\hat{z}|)^{N-2}}+\sum_{j=1}^{\infty} \frac{1}{|\xi^1|(1+\lambda|\xi^1|+\lambda|\xi^2-\hat{z}^j_L|)^{N-2}}\Bigr]^{2^{\sharp}} \\
		& \leq C \lambda^{\alpha(i)+N-1} \\
		&\quad \times \int_{\Omega \backslash B_1(\hat{x})}\Bigl[\frac{1}{|\xi^1|(1+\lambda|\xi^1|+\lambda|\xi^2-\hat{z}|)^{N-2}}+\frac{1}{|\xi^1|(1+\lambda|\xi^1|\lambda|\xi^2-\hat{z}|)^{\frac{N-2}{2}+\theta}} \frac{1}{(\lambda L)^{\frac{N-2 }{2}-\theta}} \Bigr]^{2^{\sharp}} \\
		& \leq C\lambda^{\alpha(i)-N+1}. 
	\end{align*}
	On the other hand, we write
	\begin{equation*}
		\begin{aligned}
			& \quad \int_{B_1(\hat{x})} \frac{W_{\hat{x},\lambda}^{2^{\sharp}-1}}{|\xi^1|} \partial_{i}\left(P W_{\hat{x},\lambda}\right)-\int_{B_1(\hat{x})}\frac{M(\xi)\left(P W_{\hat{x},\lambda}\right)^{2^{\sharp}-1} \partial_{i}\left(P W_{\hat{x},\lambda}\right)}{|\xi^1|} \\
			& =\int_{B_1(\hat{x})} \Bigl(\frac{W_{\hat{x},\lambda}^{2^{\sharp}-1}}{|\xi^1|}-\frac{\left(P W_{\hat{x},\lambda}\right)^{2^{\sharp}-1}}{|\xi^1|}\Bigr) \partial_{i}\left(PW_{\hat{x},\lambda}\right)-\int_{B_1(\hat{x})}\frac{(M(\xi)-1)\left(P W_{\hat{x},\lambda}\right)^{2^{\sharp}-1} \partial_{i}\left(P W_{\hat{x},\lambda}\right)}{|\xi^1|} \\
			& :=J_1-J_2.
		\end{aligned}
	\end{equation*}
	By Lemma \ref{lmA.2}, we have
	\begin{equation*}
		\begin{aligned}
			J_1 & =\int_{B_1(\hat{x})} \Bigl(\frac{W_{\hat{x},\lambda}^{2^{\sharp}-1}}{|\xi^1|}-\frac{\left(P W_{\hat{x},\lambda}\right)^{2^{\sharp}-1}}{|\xi^1|}\Bigr) \partial_{i}W_{\hat{x},\lambda}-\int_{B_1(\hat{x})} \Bigl(\frac{W_{\hat{x},\lambda}^{2^{\sharp}-1}}{|\xi^1|}-\frac{\left(P W_{\hat{x},\lambda}\right)^{2^{\sharp}-1}}{|\xi^1|}\Bigr) \partial_{i} \varphi_{\hat{x},\lambda} \\
			& =\left(2^{\sharp}-1\right) \int_{B_1(\hat{x})} \frac{W_{\hat{x},\lambda}^{2^{\sharp}-2} \varphi_{\hat{x},\lambda} \partial_{i}W_{\hat{x},\lambda}}{|\xi^1|}+\lambda^{\alpha(i)} O\Bigl(\int_{B_1(\hat{x})} \frac{W_{\hat{x},\lambda}^{2^{\sharp}-2} \varphi_{\hat{x},\lambda}^2}{|\xi^1|}\Bigr) \\
			& =\left(2^{\sharp}-1\right) \int_{B_1(\hat{x})} \frac{W_{\hat{x},\lambda}^{2^{\sharp}-2} \varphi_{\hat{x},\lambda} \partial_{i}W_{\hat{x},\lambda}}{|\xi^1|}+\lambda^{\alpha(i)} O\Bigl(\frac{1}{\lambda^{N-2}L^{2(N-2)}}\int_{B_1(\hat{x})} \frac{W_{\hat{x},\lambda}^{2^{\sharp}-2} }{|\xi^1|}\Bigr) \\
			& =\left(2^{\sharp}-1\right) \int_{B_1(\hat{x})} \frac{W_{\hat{x},\lambda}^{2^{\sharp}-2} \varphi_{\hat{x},\lambda} \partial_{i}W_{\hat{x},\lambda}}{|\xi^1|}+\lambda^{\alpha(i)} O\Bigl(\frac{1}{\lambda^{N-1}}\Bigr),
		\end{aligned}
	\end{equation*}
	where we use $N \geq 5$ in the last equality, and
	\begin{align*}
		J_2 & =\int_{B_1(\hat{x})}\frac{(M(\xi)-1)\left(P W_{\hat{x},\lambda}\right)^{2^{\sharp}-1} \partial_{i}W_{\hat{x},\lambda}}{|\xi^1|}-\int_{B_1(\hat{x})}\frac{(M(\xi)-1)\left(P W_{\hat{x},\lambda}\right)^{2^{\sharp}-1} \partial_{i} \varphi_{\hat{x},\lambda}}{|\xi^1|} \\
		& =\int_{B_1(\hat{x})}\frac{(M(\xi)-1)W_{\hat{x},\lambda}^{2^{\sharp}-1} \partial_{i}W_{\hat{x},\lambda}}{|\xi^1|}+\lambda^{\alpha(i)} O\Bigl(\int_{B_1(\hat{x})}\frac{|M(\xi)-1| W_{\hat{x},\lambda}^{2^{\sharp}-1} |\varphi_{\hat{x},\lambda}|}{|\xi^1|}\Bigr) \\ 
		& =\int_{B_1(\hat{x})}\frac{(M(\xi)-1)W_{\hat{x},\lambda}^{2^{\sharp}-1} \partial_{i}W_{\hat{x},\lambda}}{|\xi^1|}+\lambda^{\alpha(i)} O\Bigl(\frac{1}{\lambda^{\frac{N-2}{2}}} \int_{B_1(\hat{x})} \frac{|\xi|^{\beta} W_{\hat{x},\lambda}^{2^{\sharp}-1}}{|\xi^1|}\Bigr) \\
		& =\int_{B_1(\hat{x})}\frac{(M(\xi)-1)W_{\hat{x},\lambda}^{2^{\sharp}-1} \partial_{i}W_{\hat{x},\lambda}}{|\xi^1|}+\lambda^{\alpha(i)} O\Bigl(\frac{|\hat{x}|^{\beta}}{(\lambda L)^{N-2}}+\frac{1}{\lambda^{N}}\Bigr) \\
		& =\int_{B_1(\hat{x})}\frac{(M(\xi)-1)W_{\hat{x},\lambda}^{2^{\sharp}-1} \partial_{i}W_{\hat{x},\lambda}}{|\xi^1|}+\lambda^{\alpha(i)} O\Bigl(\frac{1}{\lambda^{2\beta}}+\frac{1}{\lambda^{N}}\Bigr), 
	\end{align*}
	since $|\hat{x}|=o\left(\frac{1}{\lambda}\right)$ in the assumption \eqref{aeq2.4}.
	Thus, we obtain
	\begin{equation}\label{eqA.11}
		\begin{aligned}
			& \quad -\int_{\Omega} \Delta (PW_{\hat{x},\lambda})\partial_{i}\left(P W_{\hat{x},\lambda}\right) -\int_{\Omega}\frac{M(\xi)\left(P W_{\hat{x},\lambda}\right)^{2^{\sharp}-1} \partial_{i}\left(P W_{\hat{x},\lambda}\right)}{|\xi^1|} \\
			& =\left(2^{\sharp}-1\right) \int_{B_1(\hat{x})} \frac{W_{\hat{x},\lambda}^{2^{\sharp}-2} \varphi_{\hat{x},\lambda} \partial_{i}W_{\hat{x},\lambda}}{|\xi^1|}-\int_{B_1(\hat{x})}\frac{(M(\xi)-1) W_{\hat{x},\lambda}^{2^{\sharp}-1} \partial_{i}W_{\hat{x},\lambda}}{|\xi^1|} \\
			& \quad+\lambda^{\alpha(i)} O\Bigl(\frac{1}{\lambda^{N-1}}\Bigr).
		\end{aligned}
	\end{equation}
	
	\begin{proposition}\label{proA.3}
		For $i=1, \cdots, h$, it holds that
		\begin{equation*}\label{eqA.12}
			\begin{aligned}
				& \quad -\int_{\Omega} \Delta (PW_{\hat{x},\lambda})\frac{\partial PW_{\hat{x},\lambda}}{\partial \hat{z}_{i}} -\int_{\Omega}\frac{M(\xi)\left(P W_{\hat{x},\lambda}\right)^{2^{\sharp}-1} }{|\xi^1|} \frac{\partial P W_{\hat{x},\lambda}}{\partial \hat{z}_{i}} \\
				& =\frac{B_{i} \lambda \hat{z}_{i}}{\lambda^{\beta_{k+i}-1}}+O\Bigl(\frac{1}{\lambda^{\beta-1} L}+\frac{1}{\lambda^{\beta_{M}-2+\kappa}}+\frac{\left(\lambda \hat{z}_{i}\right)^2}{\lambda^{\beta_{k+i}-1}}\Bigr),
			\end{aligned}				
		\end{equation*}
		where $B_{i}$ is a non-zero constant.
	\end{proposition}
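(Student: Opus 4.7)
The plan is to start from equation \eqref{eqA.11}, which has already been derived in the excerpt, and which reduces the quantity of interest to
\begin{equation*}
I_1 - I_2 + \lambda^{\alpha(i)}O\Bigl(\frac{1}{\lambda^{N-1}}\Bigr), \quad I_1 := (2^{\sharp}-1)\int_{B_1(\hat{x})}\frac{W_{\hat{x},\lambda}^{2^{\sharp}-2}\varphi_{\hat{x},\lambda}\partial_i W_{\hat{x},\lambda}}{|\xi^1|}d\xi, \quad I_2 := \int_{B_1(\hat{x})}\frac{(M(\xi)-1)W_{\hat{x},\lambda}^{2^{\sharp}-1}\partial_i W_{\hat{x},\lambda}}{|\xi^1|}d\xi,
\end{equation*}
since Lemma \ref{lmA.2} shows that $\partial_i(PW_{\hat{x},\lambda}) - \partial_i W_{\hat{x},\lambda}$ contributes negligibly to the full integrals. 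With $\alpha(i)=1$ for $i=1,\dots,h$ the residual is $O(1/\lambda^{N-2})$, which is absorbed into $O(1/\lambda^{\beta_M-2+\kappa})$ since $\beta_M<N-1$.

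For $I_2$, which produces the main term, I would perform the scaling $\xi = \hat{x} + \eta/\lambda$, yielding $I_2 = \lambda\int(M(\hat{x}+\eta/\lambda)-1)\,W_{0,1}^{2^{\sharp}-1}(\eta)\,\psi_i(\eta)/|\eta^1|\,d\eta$. The rescaled integrand factor $W_{0,1}^{2^{\sharp}-1}\psi_i/|\eta^1|$ is odd in $\eta_{k+i}$ and even in every other coordinate. Expanding $M-1 = \sum_j a_j|\xi_j|^{\beta_j}+O(|\xi|^{\beta_M+\kappa})$, all $j\neq k+i$ contributions are even in $\eta_{k+i}$ and vanish by parity. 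The $j=k+i$ term, $a_{k+i}|\hat{z}_i+\eta_{k+i}/\lambda|^{\beta_{k+i}}$, Taylor-expands in $\hat{z}_i$ as $|\eta_{k+i}/\lambda|^{\beta_{k+i}}$ (even, vanishes) plus the linear piece $\beta_{k+i}\hat{z}_i|\eta_{k+i}/\lambda|^{\beta_{k+i}-2}(\eta_{k+i}/\lambda)$ (odd, gives the leading contribution), plus a remainder bounded by $\hat{z}_i^2|\eta_{k+i}/\lambda|^{\beta_{k+i}-2}$. After rescaling, the linear piece produces $B_i\lambda\hat{z}_i/\lambda^{\beta_{k+i}-1}$ with
\[
B_i = -a_{k+i}\beta_{k+i}\int_{\mathbb{R}^N}\frac{|\eta_{k+i}|^{\beta_{k+i}-2}\eta_{k+i}\,\psi_i\,W_{0,1}^{2^{\sharp}-1}}{|\eta^1|}d\eta,
\]
which is nonzero since $\eta_{k+i}\psi_i$ has a definite sign (the radial monotonicity of $W_{0,1}$) and $a_{k+i}\neq 0$. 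The Taylor remainder yields exactly the error $O((\lambda\hat{z}_i)^2/\lambda^{\beta_{k+i}-1})$, and the $O(|\xi|^{\beta_M+\kappa})$ remainder of $M$ contributes $O(\lambda^{1-\beta_M-\kappa}) \subset O(1/\lambda^{\beta_M-2+\kappa})$.

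For $I_1$, the pointwise bound $|\varphi_{\hat{x},\lambda}|\leq C/(L^{N-2}\lambda^{(N-2)/2})$ from Lemma \ref{lmA.2} is too crude by a factor of $L$, so I would exploit symmetry: since $\partial_i W_{\hat{x},\lambda}$ is odd in $\xi^2_i-\hat{z}_i$ while $W_{\hat{x},\lambda}^{2^{\sharp}-2}/|\xi^1|$ is even, $\int_{B_1(\hat{x})} W_{\hat{x},\lambda}^{2^{\sharp}-2}\partial_i W_{\hat{x},\lambda}/|\xi^1|\,d\xi = 0$, hence
\[
I_1 = (2^{\sharp}-1)\int_{B_1(\hat{x})}\frac{W_{\hat{x},\lambda}^{2^{\sharp}-2}\partial_i W_{\hat{x},\lambda}}{|\xi^1|}\bigl[\varphi_{\hat{x},\lambda}(\xi)-\varphi_{\hat{x},\lambda}(\hat{x})\bigr]d\xi.
\]
(Equivalently, integrate by parts in $\xi^2_i$ to move the derivative onto $\varphi$.) Using Lemma \ref{lmA.2} to bound $|\nabla_\xi\varphi_{\hat{x},\lambda}|\leq C/(L^{N-1}\lambda^{(N-2)/2})$ via the same lattice-sum argument but for $\nabla\Gamma$, together with the scaling estimate $\int W_{\hat{x},\lambda}^{2^{\sharp}-2}|\partial_i W_{\hat{x},\lambda}||\xi-\hat{x}|/|\xi^1|\,d\xi \leq C\lambda^{1-N/2}$, yields $|I_1|\leq C/(L^{N-1}\lambda^{N-2})$, which fits $O(1/(\lambda^{\beta-1}L))$ after $L^{N-2}\sim\lambda^{\beta-N+2}$.

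The principal obstacle is the parity bookkeeping for $I_2$: one must correctly identify that only the first-order-in-$\hat{z}_i$ Taylor term survives the odd integration, verify the non-vanishing of $B_i$, and carefully handle the neighborhood $|\eta_{k+i}|\lesssim\lambda|\hat{z}_i|$ where the Taylor expansion degenerates. A secondary difficulty is that the straightforward pointwise estimate of $I_1$ misses the stated error by a factor of $L$, so the symmetry-cancellation step is not optional but required to gain that extra $L$.
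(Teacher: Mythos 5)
Your proposal is correct and follows essentially the same route as the paper: both start from \eqref{eqA.11}, extract the main term $B_i\lambda\hat z_i/\lambda^{\beta_{k+i}-1}$ from the $(M-1)$ integral by rescaling, parity in $\xi_{k+i}$, and a first-order Taylor expansion in $\hat z_i$ (with the same nonzero constant $B_i$), and both gain the decisive extra factor $1/L$ in the $\varphi_{\hat{x},\lambda}$ integral by letting a derivative fall on the far-away Green's functions $\Gamma(\cdot,\hat{x}+P_L^j)$. The only (minor) difference is in how that cancellation is implemented for $I_1$: the paper substitutes the expansion of Lemma \ref{lmA.2} and integrates by parts in the parameter $\hat z_i$, whereas you subtract $\varphi_{\hat{x},\lambda}(\hat{x})$ via odd symmetry and then need a gradient/difference estimate for $\varphi_{\hat{x},\lambda}$ in $\xi$, which is not literally contained in Lemma \ref{lmA.2} but, as you indicate, follows from the same lattice-sum argument applied to $\nabla\Gamma$.
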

	\begin{proof}
		On the one hand, we have
		\begin{equation}\label{eqA.13}
			\begin{aligned}
				& \quad \left(2^{\sharp}-1\right) \int_{B_1(\hat{x})} \frac{W_{\hat{x},\lambda}^{2^{\sharp}-2} \varphi_{\hat{x},\lambda} }{|\xi^1|} \frac{\partial W_{\hat{x},\lambda}}{\partial \hat{z}_{i}} \\
				& =-\frac{\left(2^{\sharp}-1\right) B}{\lambda^{\frac{N-2}{2}}} \sum_{j=1}^{\infty} \int_{B_1(\hat{x})} \frac{W_{\hat{x},\lambda}^{2^{\sharp}-2}}{|\xi^1|} \frac{\partial W_{\hat{x},\lambda}}{\partial \hat{z}_{i}} \Gamma\left(\xi, \hat{x}+P_L^{j}\right)+O\Bigl(\frac{\lambda}{L^{N-2} \lambda^{\frac{N}{2}}} \int_{B_1(\hat{x})} \frac{W_{\hat{x},\lambda}^{2^{\sharp}-1}}{|\xi^1|}\Bigr) \\
				& =\frac{B}{\lambda^{\frac{N-2}{2}}} \sum_{j=1}^{\infty} \int_{B_1(\hat{x})} \frac{\partial }{\partial \hat{z}_{i}}\left(W_{\hat{x},\lambda}^{2^{\sharp}-1}\right) \frac{\Gamma\left(\xi, \hat{x}+P_L^{j}\right)}{|\xi^1|}+O\Bigl(\frac{1}{(\lambda L)^{N-2}}\Bigr) \\
				& =-\frac{B}{\lambda^{\frac{N-2}{2}}} \sum_{j=1}^{\infty} \int_{B_1(\hat{x})} \frac{W_{\hat{x},\lambda}^{2^{\sharp}-1}}{|\xi^1|} \frac{\partial \Gamma\left(\xi, \hat{x}+P_L^{j}\right)}{\partial \hat{z}_{i}}+O\Bigl(\frac{1}{\lambda^{\beta}}\Bigr) \\
				& =-\left.\frac{B^2}{\lambda^{N-2} L^{N-1}} \sum_{j=1}^{\infty} \frac{\partial \Gamma\left(x, P_L^{j}\right)}{\partial \xi^2_{i}}\right|_{\xi=0}+O\Bigl(\frac{1}{(\lambda L)^{N-1}}+\frac{1}{\lambda^{\beta}}\Bigr)=O\Bigl(\frac{1}{\lambda^{\beta-1} L}\Bigr),
			\end{aligned}
		\end{equation} 
		where we use the assumption \eqref{aeq2.4}.
		
		On the other hand, we also have
		\begin{align}\label{eqA.14} 
			& \quad \int_{B_1(\hat{x})}\frac{(M(\xi)-1) W_{\hat{x},\lambda}^{2^{\sharp}-1} }{|\xi^1|} \frac{\partial W_{\hat{x},\lambda}}{\partial \hat{z}_{i}} \nonumber\\ 
			& =\int_{B_{\delta}(\hat{x})} \sum_{j=1}^{N} a_{j}\left|\xi_{j}\right|^{\beta_{j}} \frac{W_{\hat{x},\lambda}^{2^{\sharp}-1}}{|\xi^1|} \frac{\partial W_{\hat{x},\lambda}}{\partial \hat{z}_{i}}+O\Bigl(\frac{1}{\lambda^{\beta_{M}-2+\kappa}}\Bigr) \nonumber\\
			& =-\int_{B_{\delta \lambda}(0)} \sum_{j=1}^{N} a_{j}\left|\xi_{j}+\lambda \hat{x}_{j}\right|^{\beta_{j}} \frac{1}{\lambda^{\beta_{j}-1}} \frac{W_{0,1}^{2^{\sharp}-1}}{|\xi^1|} \frac{\partial W_{0,1}}{\partial \xi_{k+i}}+O\Bigl(\frac{1}{\lambda^{\beta_{M}-2+\kappa}}\Bigr) \\
			& =-\int_{B_{\delta \lambda}(0)} a_{k+i}\left|\xi_{k+i}+\lambda \hat{z}_{i}\right|^{\beta_{k+i}} \frac{1}{\lambda^{\beta_{k+i}-1}} \frac{W_{0,1}^{2^{\sharp}-1}}{|\xi^1|} \frac{\partial W_{0,1}}{\partial \xi_{k+i}}+O\Bigl(\frac{1}{\lambda^{\beta_{M}-2+\kappa}}\Bigr) \nonumber\\
			& =-\frac{a_{k+i} \lambda \hat{z}_{i}}{\lambda^{\beta_{k+i}-1}} \int_{B_{\delta \lambda}(0)} \beta_{k+i}\left|\xi_{k+i}\right|^{\beta_{k+i}-2} \xi_{k+i} \frac{W_{0,1}^{2^{\sharp}-1}}{|\xi^1|} \frac{\partial W_{0,1}}{\partial \xi_{k+i}}+O\Bigl(\frac{1}{\lambda^{\beta_{M}-2+\kappa}}+\frac{\left(\lambda \hat{z}_{i}\right)^2}{\lambda^{\beta_{k+i}-1}}\Bigr) \nonumber\\
			& =-\frac{B_{i} \lambda \hat{z}_{i}}{\lambda^{\beta_{k+i}-1}}+O\Bigl(\frac{1}{\lambda^{\beta_{M}-2+\kappa}}+\frac{\left(\lambda \hat{z}_{i}\right)^2}{\lambda^{\beta_{k+i}-1}}\Bigr), \nonumber
		\end{align}
		where $\xi=(\xi_1, \cdots, \xi_{k+h})$ and
		\begin{equation*}
			\begin{aligned}
				B_{i}&=a_{k+i}\beta_{k+i} \int_{\mathbb{R}^{N}} \left|\xi_{k+i}\right|^{\beta_{k+i}-2} \xi_{k+i} \frac{W_{0,1}^{2^{\sharp}-1}}{|\xi^1|} \frac{\partial W_{0,1}}{\partial \xi_{k+i}} \mathrm{d} \xi 
			\end{aligned}
		\end{equation*}
		is a non-zero constant.
		
		Combining \eqref{eqA.11}, \eqref{eqA.13} and \eqref{eqA.14}, we have proved Proposition \ref{proA.3}. 
	\end{proof}

	\begin{proposition}\label{proA.4}
		It holds that
		\begin{equation*}\label{eqA.15}
			\begin{aligned}
				& \quad -\int_{\Omega} \Delta (PW_{\hat{x},\lambda})\frac{\partial PW_{\hat{x},\lambda}}{\partial \lambda} -\int_{\Omega}\frac{M(\xi)\left(P W_{\hat{x},\lambda}\right)^{2^{\sharp}-1} }{|\xi^1|} \frac{\partial P W_{\hat{x},\lambda}}{\partial \lambda} \\
				& =\frac{\left(2^{\sharp}-1\right) B D}{\lambda^{N-1} L^{N-2}} \sum_{j=1}^{\infty} \Gamma\left(P^{j}, 0\right)+\frac{F}{\lambda^{\beta+1}} \sum_{i \in J} a_{i} +o\Bigl(\frac{1}{\lambda^{\beta+1}}\Bigr),
			\end{aligned}
		\end{equation*}
		where $J=\left\{j: \beta_{j}=\beta\right\}$ and $D,F$ are positive constants. Here, $\psi_{0}$ is defined in \eqref{eq2.6} and $B$ is defined in Lemma \ref{lmA.2}. 
	\end{proposition}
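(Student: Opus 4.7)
My plan is to start from equation \eqref{eqA.11} specialized to $i=h+1$, for which $\alpha(i)=-1$, so that the absorbed error term is $\lambda^{-1}O(\lambda^{1-N})=O(\lambda^{-N})$, which is $o(\lambda^{-\beta-1})$ since $\beta<N-1$. This reduces the proposition to evaluating the two model integrals
$$
\mathcal{A}:=(2^{\sharp}-1)\int_{B_1(\hat{x})}\frac{W_{\hat{x},\lambda}^{2^{\sharp}-2}\varphi_{\hat{x},\lambda}}{|\xi^1|}\frac{\partial W_{\hat{x},\lambda}}{\partial\lambda}\,d\xi, \qquad
\mathcal{B}:=\int_{B_1(\hat{x})}\frac{(M(\xi)-1)W_{\hat{x},\lambda}^{2^{\sharp}-1}}{|\xi^1|}\frac{\partial W_{\hat{x},\lambda}}{\partial\lambda}\,d\xi,
$$
and showing that $\mathcal{A}-\mathcal{B}$ matches the right-hand side of the statement. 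Assumption \eqref{aeq2.4} gives $\lambda^{N-1}L^{N-2}\sim \lambda^{\beta+1}$, which is why the two leading contributions appear at the same order $\lambda^{-\beta-1}$.

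For $\mathcal{A}$, I would substitute the expansion \eqref{eqA.1} of $\varphi_{\hat{x},\lambda}$ from Lemma \ref{lmA.2} and, for each $j\geq 1$, replace the slowly varying factor $\Gamma(\xi,\hat{x}+P_{L}^{j})$ by its value at $\xi=\hat{x}$, namely $\Gamma(0,P_{L}^{j})=L^{-(N-2)}\Gamma(P^{j},0)$; the Taylor remainder is of relative size $|\xi-\hat{x}|/|P_{L}^{j}|$ and is easily absorbed. The remaining integral is computed by the rescaling $\eta=\lambda(\xi-\hat{x})$, which yields $\int_{B_1(\hat{x})} W_{\hat{x},\lambda}^{2^{\sharp}-2}\partial_{\lambda}W_{\hat{x},\lambda}/|\xi^1|\,d\xi = \lambda^{-N/2}\int_{\mathbb{R}^N} W_{0,1}^{2^{\sharp}-2}\psi_{0}/|\eta^1|\,d\eta + o(\lambda^{-N/2})$. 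The key identity is obtained by differentiating the scaling relation $\int_{\mathbb{R}^N} W_{0,\mu}^{2^{\sharp}-1}/|\eta^1|\,d\eta=\mu^{-(N-2)/2}B$ at $\mu=1$, which gives $(2^{\sharp}-1)\int W_{0,1}^{2^{\sharp}-2}\psi_{0}/|\eta^1|\,d\eta=-\tfrac{N-2}{2}B$. Setting $D:=-\int_{\mathbb{R}^N} W_{0,1}^{2^{\sharp}-2}\psi_{0}/|\eta^1|\,d\eta>0$ and assembling the pieces produces the first term on the right-hand side, $\mathcal{A}=\frac{(2^{\sharp}-1)BD}{\lambda^{N-1}L^{N-2}}\sum_{j\geq 1}\Gamma(P^{j},0)+o(\lambda^{-\beta-1})$.

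For $\mathcal{B}$, the same rescaling $\eta=\lambda(\xi-\hat{x})$ gives $\mathcal{B}=\lambda^{-1}\int_{B_{\lambda}(0)}(M(\lambda^{-1}\eta+\hat{x})-1)\,W_{0,1}^{2^{\sharp}-1}\psi_{0}/|\eta^1|\,d\eta$. I would then substitute the expansion of $M$ from $(A_{2})$: indices $i\notin J$ contribute $O(\lambda^{-1-\beta_{i}})=o(\lambda^{-1-\beta})$ and the $O(|\xi|^{\beta_{M}+\kappa})$ remainder contributes $o(\lambda^{-1-\beta})$. For $i\in J$, the hypothesis $J\subset\{1,\dots,k\}$ or $J\subset\{k+1,\dots,N\}$ becomes crucial: in the first case $\hat{x}_{i}=0$ gives $|\xi_{i}|^{\beta}=\lambda^{-\beta}|\eta_{i}|^{\beta}$ exactly; in the second case $|\xi_{i}|^{\beta}=\lambda^{-\beta}|\eta_{i}+\lambda\hat{z}_{i-k}|^{\beta}=\lambda^{-\beta}|\eta_{i}|^{\beta}+o(\lambda^{-\beta})$ since $|\lambda\hat{z}|=o(1)$ by \eqref{aeq2.4}. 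The radial symmetry of $W_{0,1}$ and $\psi_{0}$ in $\eta^{1}$ and (separately) in $\eta^{2}$ forces $\int_{\mathbb{R}^N}|\eta_{i}|^{\beta}\,W_{0,1}^{2^{\sharp}-1}\psi_{0}/|\eta^1|\,d\eta$ to be a single common value for all $i\in J$; differentiating $\int_{\mathbb{R}^N}|\eta_{i}|^{\beta}\,W_{0,\mu}^{2^{\sharp}}/|\eta^1|\,d\eta = \mu^{-\beta}\int_{\mathbb{R}^N}|\eta_{i}|^{\beta}\,W_{0,1}^{2^{\sharp}}/|\eta^1|\,d\eta$ at $\mu=1$ shows this common value is negative, so denoting it $-F$ with $F>0$ yields $\mathcal{B}=-F\sum_{i\in J}a_{i}/\lambda^{\beta+1}+o(\lambda^{-\beta-1})$. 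Subtracting gives the claimed formula.

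The main technical obstacle is the uniform bookkeeping of error terms: the tails of the rescaled integrals outside $B_{\delta\lambda}(0)$, the first-order Taylor corrections in the $\Gamma(\xi,\hat{x}+P_{L}^{j})$ and $|\eta_{i}+\lambda\hat{z}_{i-k}|^{\beta}$ expansions, the residual $O(|\xi|^{\beta_{M}+\kappa})$ in the expansion of $M$, and the extension of the integration domain from $B_{\delta\lambda}(0)$ to $\mathbb{R}^{N}$ must all be verified to be genuinely $o(\lambda^{-\beta-1})$; here the pinch conditions $\beta_{M}<\beta(1+\frac{1}{N-2})$ and $\beta<N-1$ from $(A_{2})$ are used. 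The containment hypothesis on $J$ is exactly what allows the second leading constant to be extracted as a single scalar $F$, as already noted in Remark \ref{re1.3}.
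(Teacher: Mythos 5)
Your proposal is correct and follows essentially the same route as the paper: starting from \eqref{eqA.11} with $i=h+1$, inserting the expansion of $\varphi_{\hat{x},\lambda}$ from Lemma \ref{lmA.2}, freezing $\Gamma$ at the concentration point and rescaling to get the constant $D$ via the scaling identity for $\int_{\mathbb{R}^N} W_{0,\mu}^{2^\sharp-1}/|\eta^1|$, and treating the $(M-1)$-term by rescaling, discarding $i\notin J$ and the remainder of $(A_2)$, and identifying the common constant $F>0$ through the analogous scaling identity and the symmetry that requires $J\subset\{1,\dots,k\}$ or $J\subset\{k+1,\dots,N\}$. The error terms you defer are exactly the ones the paper estimates (all of size $O(\lambda^{-\beta-2})$ or $o(\lambda^{-\beta-1})$ under \eqref{aeq2.4} and the pinching conditions in $(A_2)$), so the sketch matches the paper's proof in both structure and constants.
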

	
	\begin{proof}
		On the one hand, similar to the proof of \eqref{eqA.13}, we have
		\begin{align}\label{eqA.16} 
			& \quad \left(2^{\sharp}-1\right) \int_{B_1(\hat{x})} \frac{W_{\hat{x},\lambda}^{2^{\sharp}-2} \varphi_{\hat{x},\lambda} }{|\xi^1|} \frac{\partial W_{\hat{x},\lambda}}{\partial \lambda} \nonumber\\
			& =-\frac{\left(2^{\sharp}-1\right) B}{\lambda^{\frac{N-2}{2}}} \sum_{j=1}^{\infty} \int_{B_1(\hat{x})} \frac{W_{\hat{x},\lambda}^{2^{\sharp}-2}}{|\xi^1|} \frac{\partial W_{\hat{x},\lambda}}{\partial \lambda} \Gamma\left(\xi, \hat{x}+P_L^{j}\right)+O\Bigl(\frac{1}{L^{N-2} \lambda^{\frac{N}{2}} \lambda} \int_{B_1(\hat{x})} \frac{W_{\hat{x},\lambda}^{2^{\sharp}-1}}{|\xi^1|}\Bigr) \nonumber\\
			& =-\frac{\left(2^{\sharp}-1\right) B}{\lambda^{N-1}} \sum_{j=1}^{\infty} \int_{B_{\lambda}(0)} \frac{W_{0,1}^{2^{\sharp}-2}}{|\xi^1|} \psi_{0} \Gamma\left(\lambda^{-1} \xi, P_L^{j}\right)+O\Bigl(\frac{1}{\lambda^{N} L^{N-2}}\Bigr) \\
			& =-\frac{\left(2^{\sharp}-1\right) B}{\lambda^{N-1}} \sum_{j=1}^{\infty} \int_{B_{\lambda}(0)} \frac{W_{0,1}^{2^{\sharp}-2}}{|\xi^1|} \psi_{0} \Gamma\left(0, P_L^{j}\right)+O\Bigl(\frac{1}{\lambda^{\beta+2}}\Bigr) \nonumber\\
			& =\frac{\left(2^{\sharp}-1\right) BD}{\lambda^{N-1} L^{N-2}} \sum_{j=1}^{\infty} \Gamma\left(P^{j}, 0\right)+O\Bigl(\frac{1}{\lambda^{\beta+2}}\Bigr), \nonumber
		\end{align}
		where 
		\begin{equation*}
			\begin{aligned}
				D:=-\int_{\mathbb{R}^{N}} \frac{W_{0,1}^{2^{\sharp}-2}}{|\xi^1|} \psi_{0}=-\frac{1}{2^{\sharp}-1}\left.\frac{\partial }{\partial \lambda}\right|_{\lambda=1}\int_{\mathbb{R}^{N}} \frac{W_{0,\lambda}^{2^{\sharp}-1}}{|\xi^1|}= \frac{N-2}{2(2^{\sharp}-1)}\int_{\mathbb{R}^{N}} \frac{W_{0,1}^{2^{\sharp}-1}}{|\xi^1|} >0
			\end{aligned}
		\end{equation*}
		is a positive constant. 
		
		On the other hand, we have
		\begin{equation}\label{eqA.17}
			\begin{aligned}
				& \quad \int_{B_1(\hat{x})}\frac{(M(\xi)-1) W_{\hat{x},\lambda}^{2^{\sharp}-1} }{|\xi^1|} \frac{\partial W_{\hat{x},\lambda}}{\partial \lambda} \\ 
				& =\int_{B_{\delta}(\hat{x})} \sum_{j=1}^{N} a_{j}\left|\xi_{j}\right|^{\beta_{j}} \frac{W_{\hat{x},\lambda}^{2^{\sharp}-1}}{|\xi^1|} \frac{\partial W_{\hat{x},\lambda}}{\partial \lambda}+O\Bigl(\frac{1}{\lambda^{\beta_{M}+\kappa}}\Bigr) \\
				& =\int_{B_{\delta \lambda}(0)} \sum_{j=1}^{N} a_{j}\left|\xi_{j}+\lambda \hat{x}_{j}\right|^{\beta_{j}} \frac{1}{\lambda^{\beta_{j}+1}} \frac{W_{0,1}^{2^{\sharp}-1}}{|\xi^1|} \psi_{0}+O\Bigl(\frac{1}{\lambda^{\beta_{M}+\kappa}}\Bigr)\\
				& =\int_{B_{\delta \lambda}(0)} \sum_{j=1}^{N} a_{j}\left|\xi_{j}\right|^{\beta_{j}} \frac{1}{\lambda^{\beta_{j}+1}} \frac{W_{0,1}^{2^{\sharp}-1}}{|\xi^1|} \psi_{0}+o\Bigl(\frac{1}{\lambda^{\beta+1}}\Bigr) \\ 
				& =\int_{B_{\delta \lambda}(0)} \sum_{i \in J} a_{i}\left|\xi_{i}\right|^{\beta} \frac{1}{\lambda^{\beta+1}} \frac{W_{0,1}^{2^{\sharp}-1}}{|\xi^1|} \psi_{0}+o\Bigl(\frac{1}{\lambda^{\beta+1}}\Bigr) \\
				& =\frac{1}{\lambda^{\beta+1}} \sum_{i \in J} a_{i} \int_{\mathbb{R}^{N}}\left|\xi_{i}\right|^{\beta} \frac{W_{0,1}^{2^{\sharp}-1}}{|\xi^1|} \psi_{0}+o\Bigl(\frac{1}{\lambda^{\beta+1}}\Bigr).
			\end{aligned}
		\end{equation}
		Noting that there hold that 
		\begin{equation*}
			\begin{aligned}
				\int_{\mathbb{R}^{N}}\left|\xi_{i}\right|^{\beta} \frac{W_{0,1}^{2^{\sharp}-1}}{|\xi^1|} \psi_{0}=\frac{1}{2^{\sharp}}\left.\frac{\partial }{\partial \lambda}\right|_{\lambda=1}\int_{\mathbb{R}^{N}}\left|\xi_{i}\right|^{\beta} \frac{W_{0,\lambda}^{2^{\sharp}}}{|\xi^1|}=-\frac{\beta}{2^{\sharp}}\int_{\mathbb{R}^{N}}\left|\xi_{i}\right|^{\beta} \frac{W_{0,1}^{2^{\sharp}}}{|\xi^1|} <0
			\end{aligned}
		\end{equation*}
		and 
		\begin{equation*}
			\begin{aligned}
				\int_{\mathbb{R}^{N}}\left|\xi_{i}\right|^{\beta} \frac{W_{0,1}^{2^{\sharp}-1}}{|\xi^1|} \psi_{0}&=\int_{\mathbb{R}^{N}}\left|\xi_{j}\right|^{\beta} \frac{W_{0,1}^{2^{\sharp}-1}}{|\xi^1|} \psi_{0}, \ \text{for any} \ 1 \leq i,j \leq k \ \text{or} \ k+1 \leq i,j \leq N.
			\end{aligned}
		\end{equation*}
		Thus we get
		\begin{equation*}
			\begin{aligned}
				& \int_{B_1(\hat{x})}\frac{(M(\xi)-1) W_{\hat{x},\lambda}^{2^{\sharp}-1} }{|\xi^1|} \frac{\partial W_{\hat{x},\lambda}}{\partial \lambda}=-\frac{F}{\lambda^{\beta+1}} \sum_{i \in J} a_{i} +o\Bigl(\frac{1}{\lambda^{\beta+1}}\Bigr),
			\end{aligned}
		\end{equation*} 
		where 
		$$ 
		F:=-\int_{\mathbb{R}^{N}}\left|\xi_{i}\right|^{\beta} \frac{W_{0,1}^{2^{\sharp}-1}}{|\xi^1|} \psi_{0}>0, \ i=1 \ \text{or} \ k+1
		$$
		is a positive constant. 
		
		Combining \eqref{eqA.11}, \eqref{eqA.16} and \eqref{eqA.17}, we finish the proof of Proposition \ref{proA.4}.
	\end{proof}
	
	\subsection*{Acknowledgements}
	The authors would like to thank professor Chunhua Wang from Central China Normal University for the helpful discussion.

\end{document}